\newcommand{\bc}{\textbf{C}}
\newcommand{\p}{\partial}
\newcommand{\bb}{\boldsymbol}
\newcommand{\bu}{\textbf{u}}
\newtheorem{thm}{Theorem}[section]
\newcommand{\jump}[1]{[\![#1]\!]}
\newcommand{\df}[2]{\frac{\partial #1}{\partial #2}} 
\newcommand{\bp}{\bb{P}}
\newcommand{\B}{\bb{B}}
\newcommand{\DP}{\Delta P}
\newcommand{\hb}{b}
\newcommand{\pll}{p_\parallel}
\newcommand{\per}{p_{\perp}}
\newcommand{\bhat}{\boldsymbol{b}}
\newcommand{\con}{\textbf{U}}
\newcommand{\W}{\textbf{W}}
\newcommand{\f}{\textbf{f}}
\newcommand{\F}{\textbf{F}}
\newcommand{\ent}{\mathcal{E}}
\newcommand{\entf}{\mathcal{Q}}
\newcommand{\evar}{\textbf{V}}
\newcommand{\s}{\textbf{S}}
\newcommand{\iph}{i+\frac{1}{2}}
\newcommand{\imh}{i-\frac{1}{2}}
\newcommand{\jph}{j+\frac{1}{2}}
\newcommand{\jmh}{j-\frac{1}{2}}
\newcommand{\Dx}{\Delta x}
\newcommand{\Dy}{\Delta y}
\newcommand{\ote}{\textbf{O2ES-EXP}}
\newcommand{\othe}{\textbf{O3ES-EXP}}
\newcommand{\ofe}{\textbf{O4ES-EXP}}
\newcommand{\oti}{\textbf{O2ES-IMEX}}
\newcommand{\othi}{\textbf{O3ES-IMEX}}
\newcommand{\ofi}{{\textbf{O4ES-IMEX}}}
\begin{document}
	
	\title{Entropy stable finite difference schemes for Chew, Goldberger \& Low anisotropic plasma flow equations
	}
	
	\titlerunning{Entropy stable finite difference schemes for CGL equations}        
	
	\author{Chetan Singh \and Anshu Yadav\footnote{Corresponding Author} \and Deepak Bhoriya \and Harish Kumar \and Dinshaw S. Balsara      
	}
	
	
	\institute{Chetan Singh \at
		Department of Mathematics, Indian Institute of Technology Delhi, India \\
		\email{maz218518@maths.iitd.ac.in}           
		\and
		Anshu Yadav \at
		Department of Mathematics, Indian Institute of Technology Delhi, India\\
		\email{maz178435@maths.iitd.ac.in}    
		\and
		Deepak Bhoriya \at
		Physics Department, University of Notre Dame, USA \\
		\email{dbhoriy2@nd.edu}
		\and
		Harish Kumar \at
		Department of Mathematics, Indian Institute of Technology Delhi, India\\
		\email{hkumar@iitd.ac.in}
		\and
		Dinshaw S. Balsara \at
		Physics Department, University of Notre Dame, USA\\
		ACMS, University of Notre Dame, USA\\
		\email{dbalsara@nd.edu}
	}
	
	\date{Received: date / Accepted: date}

	\maketitle
	
	\begin{abstract}
		In this article, we consider the Chew, Goldberger \& Low (CGL) plasma flow equations, which is a set of nonlinear, non-conservative hyperbolic PDEs modelling anisotropic plasma flows. These equations incorporate the double adiabatic approximation for the evolution of the pressure, making them very valuable for plasma physics, space physics and astrophysical applications. We first present the entropy analysis for the weak solutions. We then propose entropy-stable finite-difference schemes for the CGL equations. The key idea is to rewrite the CGL equations such that the non-conservative terms do not contribute to the entropy equations. The conservative part of the rewritten equations is very similar to the magnetohydrodynamics (MHD) equations. We then symmetrize the conservative part by following Godunov's symmetrization process for MHD. The resulting equations are then discretized by designing entropy conservative numerical flux and entropy diffusion operator based on the entropy scaled eigenvectors of the conservative part. We then prove the semi-discrete entropy stability of the schemes for CGL equations. The schemes are then tested using several test problems derived from the corresponding MHD test cases.
		\keywords{CGL Equations \and Symmetrization \and Entropy stability \and Higher-order finite difference schemes}
		\subclass{65M06 \and 65M12 \and 65M20 \and 76P99 \and 76W05}
	\end{abstract}
	
	\section{Introduction}
	Plasma flows play an important role in numerous applications in astrophysics, e.g., relativistic jets from active galactic nuclei \cite{landau1987relativistic}, earth's magnetosphere~\cite{heinemann1999role,dumin2002corotation,karimabadi2014link,burch2016magnetospheric}, the solar wind~\cite{hollweg1976collisionless,zouganelis2004transonic,chandran2011incorporating}. Often, to simulate plasma flows, equations of Magnetohydrodynamics (MHD) are used. It is one of the simplest descriptions of the plasma flows as it treats plasma as a single fluid having a single density, velocity and pressure. However, the derivation of MHD equations involves several assumptions, which may not be true in many cases. Hence, several efforts have been made to generalize the MHD equations (\cite{otto19903d,zhao_positivity-preserving_2014,hakim_high_2006,kumar_entropy_2012,abgrall_robust_2014}). 
	\par One of the key assumptions in the derivation of MHD equations is the assumption of fluid being {\em local thermodynamical equilibrium}, which allows the scalar description of the pressure (or temperature) (see \cite{hunana2019brief,goedbloed2004principles}). However, for space plasmas, this assumption is often not true \cite{meena2019robust,brown1995numerical,dubroca2004magnetic,hakim2008extended,johnson2014gaussian,sangam2008hllc,meng2012classical,Hirabayashi2016new,wang_exact_2020} and a tensorial description of the pressure is needed adequately describe non-local thermodynamical equilibrium effects. In \cite{berthon2006numerical,meena_robust_2017,meena_positivity-preserving_2017,meena_positivity-preserving_2017,meena_well-balanced_2017,biswas2021entropy,sen_entropy_2018,Hirabayashi2016new}, a three-dimensional tensor is used to describe the pressure. However, pressure tensor in plasmas is often rotated by magnetic field (see \cite{meena2019robust,hakim2008extended,sangam2008hllc}) and in this case, pressure tensor can be simplified by using tensorial components parallel to magnetic field and perpendicular to the magnetic field (\cite{huang2019six,hunana2019brief,meng2012classical}). Furthermore, if two-fluid effects are not important, then plasma flow can be described using  Chew, Goldberger \& Low (CGL) equations (\cite{chew1956boltzmann,gombosi1991transport,hunana2019brief,meng2012classical}).
	
	The CGL equations were first proposed in \cite{chew1956boltzmann} and are the simplest model to describe anisotropic plasma flows. These equations apply to dilute plasmas where the plasma may be approximated as collisionless. In the collisionless limit, charged particles gyrating around a magnetic field have to satisfy certain invariants. The CGL equations are built on the idea that there are two adiabatic invariants, one for the pressure parallel to the magnetic field and another for the pressure perpendicular to the magnetic field. In addition to the mass and momentum conservation equations, the fluid is described using two scalar pressure components (one parallel to the magnetic field and another perpendicular to the magnetic field). Furthermore, the evolution of the magnetic field is modelled via an induction equation. The CGL equations are a set of non-conservative hyperbolic systems of PDEs (\cite{meng2012classical,huang2019six}). Due to this, the design of the numerical schemes for CGL equations is challenging. The weak solution formulations for these equations require choosing an appropriate path which is often not known \cite{dal1995definition}. Furthermore, the numerical solutions are sensitive to the numerical viscosity \cite{abgrall2010comment}. In practice, a linear path is often considered to design approximated Riemann solvers (\cite{huang2019six,meng2012classical}).
	
	In this article, we consider entropy stable discretization of the equations following the ideas in \cite{chandrashekar2013kinetic,fjordholm2012arbitrarily,ismail2009affordable,tadmor2003entropy,yadav2023entropy,chandrashekar2016entropy}. We proceed as follows:
	\begin{enumerate}
		\item We first present the entropy analysis of the CGL system at the continuous level.
		\item Following the idea of \cite{yadav2023entropy}, we then reformulate the CGL equations such the non-conservative terms do not contribute to the entropy evolution. \item The conservative flux of the reformulated equations is very similar to the MHD flux. Furthermore, the entropy equation for the conservative part depends on the divergence of the magnetic field. Hence, we use Godunov's symmetrization for the conservative equations following \cite{chandrashekar2016entropy}.
		\item We then design entropy conservative numerical flux for the conservative part of equations. Following \cite{fjordholm2012arbitrarily}; we also design a higher-order numerical diffusion operator based on the entropy-scaled eigenvectors and sign-preserving reconstruction process.
		\item Finally, combining the entropy-stable discretization of the conservative part with a non-conservative part, we prove the semi-discrete entropy stability of the proposed scheme.
	\end{enumerate}
	The rest of the article is organized as follows: In the next Section, we describe the CGL equations and present an entropy analysis of the equations. In Section \ref{sec:reformulation},  we present the reformulated CGL equations and symmetrize the conservative part. In Section \ref{sec:semi_discrete}, we propose entropy-stable schemes and prove semi-discrete entropy stability for the CGL equations. Time stepping is discussed in Section \ref{sec:time_disc}, followed by numerical results in Section \ref{sec:num_res}. Final conclusions are presented in Section \ref{sec:conc}.
	
	\section{CGL equations for anisotropic plasma flows} 
	Following \cite{huang2019six,meng2012classical}, the CGL equations can be written as follows:
	\begin{subequations}
		\label{eq:cgl_con}
		\begin{align}
			&\frac{ \p\rho}{\p t}+\nabla \cdot(\rho \bu)=0,&\label{eq:cgl_con_1}\\
			&\frac{\p \rho \bu}{\p t}+\nabla\cdot\left[\rho \bu\bu+\per \textbf{I}+(\pll-\per)\bhat\bhat-\left(\B\B-\frac{|\B|^2}{2}\textbf{I}\right)\right]=0,\label{eq:cgl_con_2}&\\
			&\frac{\p \pll}{\p t}+\nabla\cdot(\pll\bu)=-2\pll\bhat\cdot\nabla \bu\cdot\bhat,&\label{eq:cgl_con_3}\\
			&\frac{\p e}{\p t}+\nabla\cdot\left[\bu\left(e+ \per+ \frac{|\B|^2}{2}\right) +\bu\cdot\left((\pll-\per)\bhat\bhat-\B\B\right)\right]=0,&\label{eq:cgl_con_4}\\
			&\frac{\p \B}{\p t}+\nabla \times [-(\bu \times \B)]=0.\label{eq:cgl_con_5}
		\end{align}
	\end{subequations}
	Here, $\rho, \bu=(u_x,u_y,u_z)^\top$, and $\B=(B_x, B_y, B_z)^\top$ are the density, velocity, and magnetic field, respectively.  The vector $\bhat$ is the unit vector in the direction of the magnetic field, defined as $\bhat=\B/|\B|$. One of the key shortcomings of the CGL system is that a physics-free study of the CGL equations seems to require magnetic field $\B$ to be non-zero. Otherwise, the conventional wisdom has been that the magnetic field direction is not defined, and hence the equations are not valid. However, it is useful to mention that a recent paper by~\cite{bhoriya2024} shows that a physics-based approach to the CGL equations resolves this and many other issues associated with simulating the CGL equations. However, we will assume that the magnetic field is non-zero. Integration of the mathematics-based approach here with the physics-based approach in \cite{bhoriya2024} will be the topic of a future paper. The pressure tensor $\bp$, rotated by the magnetic field is described using scalars $\pll$ and $\per$ via relation,
	\begin{align*}
		&\bp=\pll\bhat\bhat+\per(\textbf{I}-\bhat\bhat)+\bb{\Pi} \\
		&=\pll\begin{pmatrix}
			b_{x}b_{x} & b_xb_y & b_{x}b_{z}\\
			b_{y}b_{x} & b_{y}b_{y} & b_{y}b_{z}\\
			b_{z}b_{x} &  b_{z}b_{y} & b_{z}b_{z}
		\end{pmatrix}+\per\begin{pmatrix}
			1-b_{x}b_{x} & -b_xb_y & -b_{x}b_{z}\\
			-b_{y}b_{x} & 1-b_{y}b_{y} & -b_{y}b_{z}\\
			-b_{z}b_{x} &  -b_{z}b_{y} & 1-b_{z}b_{z}
		\end{pmatrix}+\begin{pmatrix}
			\Pi_{xx} & \Pi_{xy} & \Pi_{xz}\\
			\Pi_{yx} & \Pi_{yy} & \Pi_{yz}\\
			\Pi_{zx} &  \Pi_{zy} & \Pi_{zz}
		\end{pmatrix}
	\end{align*}
	where the scalars are given by $\pll=\bp:\bhat\bhat$ and $\per=\bp:\textbf{I}-\bhat\bhat$, denoting parallel and perpendicular components of the pressure tensor, respectively. The CGL model is obtained by neglecting the  Finite Larmor Radius pressure tensor, i.e., we assume $\bb{\Pi}=0$. The equation of state is given by,
	\begin{align}
		e=\frac{\rho |\bu|^2}{2}+\frac{|\B|^2}{2}+\frac{3 p}{2}
		\label{eq:EOS}
	\end{align}
	where, $p=\frac{2 \per+\pll}{3}$ is the average scalar pressure. Given the above notations, we define the vector of conservative variables as $\mathbf{U}=(\rho, \rho \bu, \pll, e, \B)^\top$. Equation \eqref{eq:cgl_con_1} represents the conservation of mass, followed by  Equation \eqref{eq:cgl_con_2}, representing momentum conservation. The parallel component of the pressure tensor is governed by \eqref{eq:cgl_con_3}. This is the only equation that contains non-conservative terms. The energy conservation is given by equation \eqref{eq:cgl_con_4}, followed by the induction equation \eqref{eq:cgl_con_5} for the magnetic field. In addition, we also have divergence-free conditions on the magnetic field given by,
	\begin{equation}
		\nabla \cdot \B=0
		\label{eq:cgl_divB}
	\end{equation}
	However, this condition is an outcome of the induction equations \eqref{eq:cgl_con_5} and divergence free initial condition.
	
	The eigenvalues of the CGL system~\cite{kato1966propagation,meng2012classical} in the $x$-direction are given by,
	\begin{equation}
		\bb{\Lambda}=(u_x,~ u_x, ~0, ~u_x\pm c_a,~u_x\pm c_f,~u_x\pm c_s)
		\label{eq:eigenvalues}
	\end{equation}
	where,
	\begin{align*}
		& c_a= \sqrt{{\frac{B_x^2}{\rho}-\frac{(\pll-p_\perp){b}_x^2}{\rho}}},&\\&
		c_f= \frac{1}{\sqrt{2\rho}}\big[|\B|^2+2p_\bot +b_x^2(2\pll-\per) + \{(|\B|^2+2\per +b_x^2(2\pll-\per))^2&\\&~~~~~~+4(\per^2b_x^2(1-b_x^2)-3\pll \per b_x^2(2-b_x^2)+3\pll^2 b_x^4-3B_x^2 \pll)\}^{\frac{1}{2}}\big]^{\frac{1}{2}},&\\&
		c_s=\frac{1}{\sqrt{2\rho}}\big[|\B|^2+2\per +b_x^2(2\pll-\per) - \{(|\B|^2+2\per +b_x^2(2\pll-\per))^2&\\&~~~~~~+4(\per^2b_x^2(1-b_x^2)-3\pll \per b_x^2(2-b_x^2)+3\pll^2 b_x^4-3B_x^2 \pll )\}^{\frac{1}{2}}\big]^{\frac{1}{2}}.
	\end{align*}
	
	It was shown in \cite{kato1966propagation} that eigenvalues of the system can become imaginary. Hence, some conditions on the admissible domain need to be imposed. Following \cite{kato1966propagation}, we define,
	$$
	p_m=\frac{p_{\bot}^2}{6p_{\bot}+3|\B|^2}, \text{     and    } p_M={|\B|^2}+p_{\bot}.
	$$
	Then, the admissible domain for the CGL system is given by,
	\begin{equation}
		\label{eq:cgl_domain_new}
		\Omega=\{\con\in \mathbb{R}^9 |~\rho>0,~\pll>0,\per>0,~p_{m} \leq \pll \leq p_{M} \}.
	\end{equation}
	In addition, it was observed in \cite{kato1966propagation}, that the slow wave is not necessarily slower than the Alfv\'en  wave, hence the domain $\Omega$ has to be divided as follows:
	
	\begin{enumerate}
		\item $p_{m}\le \pll \le\frac{p_{M}}{4},~~~~~~~~~~\text{if}~~~~~~~ c_s\le c_{a}\le c_f,$
		\item $\frac{p_{M}}{4}\le \pll \le\frac{p_{M}}{4}+\frac{3p_{m}}{4},~~\text{if}~~~~~~ c_s\le c_{a}< c_f,$
		\item $\frac{p_{M}}{4}+\frac{3p_{m}}{4}\le \pll \le p_{M},~~\text{if}~~~~~~ c_a\le c_{s}< c_f.$
	\end{enumerate}

	\subsection{Entropy analysis}
	\label{subsec:ent}
	Several hyperbolic systems consider pressure tensor as one of the variables. Following \cite{berthon2006numerical,biswas2021entropy,sen_entropy_2018,berthon2015entropy,yadav2023entropy}, we define the entropy $\ent$ and the entropy flux $\entf_x$ for the CGL system \eqref{eq:cgl_con}, as
	\begin{align}\label{entropy-pair}
		\ent= -\rho s, \qquad \entf_x=-\rho u_x s, \qquad
		s=\ln \left( \dfrac{\det\bp}{\rho^5} \right)=\ln \left( \dfrac{\pll \per^2}{\rho^5} \right).
	\end{align}
	
	\begin{lemma}\label{lemma:ent_eq}
		If $\con$ is a smooth solution of the CGL system \eqref{eq:cgl_con_1}-\eqref{eq:cgl_con_5}, then it satisfies,
		\begin{equation}
			\label{eq:ent_eql_1d}
			\p_t \ent+ \p_x \entf_x=0.
		\end{equation}
	\end{lemma}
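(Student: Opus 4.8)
The plan is to compute $\p_t \ent + \p_x \entf_x$ directly by expanding the time derivative of $\ent=-\rho s$ with $s=\ln(\pll\per^2/\rho^5)$ in terms of the evolution equations \eqref{eq:cgl_con_1}--\eqref{eq:cgl_con_5}, restricting to one space dimension so that $\p_y=\p_z=0$. First I would write
\[
\p_t \ent = -s\,\p_t\rho - \rho\,\p_t s, \qquad
\p_t s = \frac{\p_t \pll}{\pll} + \frac{2\,\p_t\per}{\per} - \frac{5\,\p_t\rho}{\rho},
\]
so the calculation reduces to finding clean expressions for $\p_t\rho$, $\p_t\pll$, and $\p_t\per$. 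The first is immediate from \eqref{eq:cgl_con_1}. For $\p_t\pll$ I would use \eqref{eq:cgl_con_3} directly. The quantity $\per$ is not an evolved variable, so the key preliminary step is to derive its evolution equation: starting from the definition $p=(2\per+\pll)/3$ together with the equation of state \eqref{eq:EOS}, one extracts $\frac{3}{2}p = e - \tfrac{1}{2}\rho|\bu|^2 - \tfrac{1}{2}|\B|^2$, hence $\per = \tfrac{3}{2}p - \tfrac12\pll = e - \tfrac12\rho|\bu|^2 - \tfrac12|\B|^2 - \tfrac12\pll$; differentiating in $t$ and substituting the energy equation \eqref{eq:cgl_con_4}, the momentum equation \eqref{eq:cgl_con_2} (to handle $\p_t(\rho|\bu|^2)$ via $\bu\cdot\p_t(\rho\bu) - \tfrac12|\bu|^2\p_t\rho$), the induction equation \eqref{eq:cgl_con_5}, and \eqref{eq:cgl_con_3}, one obtains a transport equation for $\per$ of the form $\p_t\per + \p_x(\per u_x) = \per(\text{something involving }\nabla\bu\text{ and }\bhat)$. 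I expect the right-hand side to come out as $+\per\,\bhat\cdot\nabla\bu\cdot\bhat - \per\,\nabla\cdot\bu$, i.e. the standard double-adiabatic law $\tfrac{d}{dt}(\per/(\rho|\B|))=0$ rewritten in conservation-law-with-source form — this bookkeeping is the main obstacle, since it requires carefully cancelling all the magnetic and kinetic contributions in the energy flux against the momentum and induction terms.

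Once the three transport equations are in hand, I would substitute. Writing $D_x v := \bhat\cdot\nabla\bu\cdot\bhat$ (the field-aligned strain) and using $\nabla\cdot\bu = \p_x u_x$ in 1D, the source terms are: $\p_t\pll + \p_x(\pll u_x) = -2\pll D_x v$, $\p_t\per + \p_x(\per u_x) = \per D_x v - \per\,\p_x u_x$, and $\p_t\rho + \p_x(\rho u_x)=0$. Then
\[
\p_t s + u_x\,\p_x s
= \frac{-2\pll D_x v - \pll\,\p_x u_x}{\pll}
+ \frac{2(\per D_x v - \per\,\p_x u_x) - 2\per\,\p_x u_x}{\per}
- \frac{5(-\rho\,\p_x u_x)}{\rho},
\]
where each numerator is obtained by isolating $\p_t(\cdot)$ from its transport equation and using $\p_x(\phi u_x) = u_x\p_x\phi + \phi\p_x u_x$. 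The $D_x v$ terms give $-2 + 2 = 0$ and the $\p_x u_x$ terms give $-1 - 2 - 2 + 5 = 0$, so $\p_t s + u_x\,\p_x s = 0$, i.e. $s$ is simply advected.

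Finally I would assemble the entropy identity: since $\ent=-\rho s$ and $\entf_x=-\rho u_x s$,
\[
\p_t\ent + \p_x\entf_x
= -s\big(\p_t\rho + \p_x(\rho u_x)\big) - \rho\big(\p_t s + u_x\,\p_x s\big) = 0,
\]
using mass conservation for the first bracket and the advection identity just established for the second. I would remark that the computation is performed componentwise in one dimension and that the multidimensional version follows identically with $\p_x u_x$ replaced by $\nabla\cdot\bu$ and $D_x v$ by $\bhat\cdot\nabla\bu\cdot\bhat$; the delicate point throughout is the derivation of the $\per$ equation, and I would present that derivation as an intermediate step (possibly flagged as the reason the equation of state \eqref{eq:EOS} and the specific entropy $\det\bp/\rho^5$ are the "right" choices, since they are exactly what makes the non-conservative term $-2\pll\,\bhat\cdot\nabla\bu\cdot\bhat$ cancel out of the entropy balance).
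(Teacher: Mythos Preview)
Your approach is essentially the paper's: derive the evolution of $\per$, show that $s$ is materially advected, and combine with mass conservation. The one point worth flagging is that your $\per$ equation, obtained via the double-adiabatic law $\tfrac{d}{dt}(\per/\rho|\B|)=0$, already uses $\nabla\cdot\B=0$; the paper instead derives $\p_t\per$ directly from \eqref{eq:cgl_con_1}--\eqref{eq:cgl_con_5} and picks up an extra term $-(\bu\cdot\B)\,\p_x B_x$, yielding
\[
\p_t s + u_x\,\p_x s + \frac{2(\bu\cdot\B)}{\per}\,\p_x B_x = 0
\]
before invoking the divergence constraint as the final step. This is not a gap in your argument---the constraint is needed either way---but the paper's ordering keeps the $\nabla\cdot\B$ dependence explicit, which is exploited immediately in the two-dimensional remark \eqref{eq:ent_eql_2d_withdiv} and later in the Godunov symmetrization of Section~\ref{subsec:sym_cons_flux}.
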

	
	\begin{proof}
		By the chain rule,
		\begin{align*}
			\p_t (\pll\per^2)=\per^2 \p _t \pll+2\pll\per\p_t \per. 
		\end{align*}
		
		Using \eqref{eq:cgl_con_1}-\eqref{eq:cgl_con_5} to derive the evolution of $\per$, to get,	
		\begin{align}\label{eq:cgl_p}
			\p_t \per=&-\p_x(\per u_x)-\per\p_x u_x+\per b_x( b_x{\p_x u_x}+b_y{\p_x u_y}+b_z{\p_x u_z})-{(\bu\cdot\B)}{\p_x B_x}.
		\end{align}
		Combing with \eqref{eq:cgl_con_3}, we get, 
		\begin{eqnarray*}
			\p_t (\pll\per^2)		&=&\per^2 \p _t \pll+2\pll\per\p_t \per\\ \nonumber
			&=&-u_x \p_x (det \bp)-5 (det(\bp)) \p_x u_x - 2\pll\per(\bu\cdot\B)\p_x B_x,
		\end{eqnarray*}
		Using \eqref{eq:cgl_con_1}, we have,	
		\begin{align*}
			\p_t s+ u_x \p_x s+2\frac{(\bu\cdot\B)}{\per}\p_x B_x=0.
		\end{align*}
		which immediately results in
		\begin{equation}
			\label{eq:ent_eql_1d_withdiv}
			\p_t \ent+ \p_x \entf_x+\frac{2\rho(\bu\cdot\B)}{\per}\p_x B_x=0.
		\end{equation}
		This combining with $\nabla\cdot\B=0$, gives \eqref{eq:ent_eql_1d}.
	\end{proof}
	\begin{remark}
		In two dimensions, \eqref{eq:ent_eql_1d_withdiv} is replaced with,
		\begin{equation}
			\label{eq:ent_eql_2d_withdiv}
			\p_t \ent+ \p_x \entf_x+\p_y\entf_y+\frac{2\rho(\bu\cdot\B)}{\per}\nabla\cdot\B=0,
		\end{equation}
		with $\entf_y=-\rho u_y s$.	Furthermore, the entropy equality \eqref{eq:ent_eql_1d}, for the weak solutions results in,
		\begin{equation}
			\label{eq:ent_ineql_2d}
			\p_t \ent+ \p_x \entf_x+\p_y\entf_y\le0.
		\end{equation}
	\end{remark}
	We aim to design the entropy stable schemes so that \eqref{eq:ent_ineql_2d} holds at the semi-discrete level.

	\section{Reformulation of the CGL system}
	\label{sec:reformulation}
	The set of equations \eqref{eq:cgl_con} are not suitable for designing the entropy stable schemes. To do so, motivated by \cite{yadav2023entropy}, we will rewrite the equations.  We will consider the system~\eqref{eq:cgl_con} in two dimensions. We define entropy variable $\evar=\frac{\p \ent}{\p \con}$, which is given by,
	\begin{align}
		\evar&= \left( V_1,~V_2,~V_3,~V_4,~V_5,~V_6,~V_7,~V_8,~V_9 \right)^\top\nonumber \\
		&=\Bigg(5-s-\beta_\perp |\bu|^2,~2\beta_\perp\bu,~-\beta_\parallel+\beta_\perp,~-2\beta_\perp,~2\beta_\perp\B\Bigg)^\top,
		\label{eq:envar}
	\end{align} 
	where, $\beta_\perp=\frac{\rho}{\per},$ and $\beta_\parallel=\frac{\rho}{\pll}$.  Let us now recall the following definition from \cite{yadav2023entropy}:
	\begin{definition}
		A convex function $\ent(\con)$ is said to be an entropy function for the system
		\begin{equation*}
			\frac{\p \con}{\p t}+\frac{\p \f_{x}}{\p x} +\frac{\p \f_{y}}{\p y}+ \bc_{x}(\con)\frac{\p \con}{\p x} + \bc_{y}(\con)\frac{\p \con}{\p y}=0.
		\end{equation*}
		if there exist smooth functions $\entf_x(\con)$ and $\entf_y(\con)$ such that
		\begin{equation}\label{eq:ent_def}
			{\entf_x}'(\con) = \evar{\f_x}'(\con), \qquad {\entf_y}'(\con) = \evar{\f_y}'(\con)
		\end{equation}
		and
		\begin{equation}\label{eq:ent_noncons_prod_def}
			\evar^\top \bc_x(\con) = \evar^\top \bc_y(\con) = 0.
		\end{equation}
		The functions ($\ent, \entf_x, \entf_y$) form an entropy-entropy flux pair.
	\end{definition}
	The non-conservative terms in \eqref{eq:cgl_con} do not satisfy \eqref{eq:ent_noncons_prod_def}. Hence, we rewrite the equations \eqref{eq:cgl_con} in the form,
	\begin{align}\label{eq:cgl_ref_noncons}
		\frac{\p \con}{\p t}+\frac{\p \f_{x}}{\p x} + \frac{\p \f_{y}}{\p y} + \bc_{x}(\con)\frac{\p \con}{\p x} + \bc_{y}(\con)\frac{\p \con}{\p y}=0.
	\end{align}
	with flux functions given by,
	\begin{align*}
		\f_x=\begin{pmatrix*}
			\rho u_x\\
			\rho u_x^2 + \per-\left(B_x^2-\frac{|\B|^2}{2}\right)\\
			\rho u_x u_y -  B_x B_y\\
			\rho u_x u_z -  B_x B_z\\
			\pll u_x\\
			u_x\left(e+ \per+ \frac{|\B|^2}{2}\right) -{B_x}(\B\cdot\bu)\\
			0\\
			u_x B_y-u_y B_x\\
			u_x B_z-u_z B_x
		\end{pmatrix*},~
		\textbf{f}_y&=\begin{pmatrix*}
			\rho u_y\\
			\rho u_x u_y - B_x B_y\\
			\rho u_y^2 +\per-  \left(B_y^2-\frac{|\B|^2}{2}\right)\\
			\rho u_y u_z-  B_y B_z\\
			\pll u_y\\
			u_y\left(e+ \per+ \frac{|\B|^2}{2}\right) -B_y(\B\cdot\bu)\\
			u_y B_x - u_x B_y\\
			0\\
			u_y B_z - u_z B_y
		\end{pmatrix*}.
	\end{align*}
	We note that the above fluxes are very similar to the MHD fluxes. The rest of the terms are considered in the non-conservative form with matrices $\bc_x$ and $\bc_y$. 
	The non-conservative form with matrices $\bc_x$ and $\bc_y$ given by,
	\begin{landscape}
		Matrix $\bc_{x}(\con)$ is given below,
		\begin{align*}
			\begin{pmatrix}
				0&0&0&0&0&0&0&0&0\\
				-\frac{b_x^2 |\bu|^2}{2}& b_x^2 u_x& b_x^2 u_y& b_x^2 u_z&\frac{3}{2} b_x^2 & -b_x^2 & b_x^2 B_x+\frac{2\Gamma_x b_x}{|\B|} & b_x^2 B_y - \frac{2\DP b_{yxx}}{|\B|} & b_x^2 B_z-\frac{2\DP b_{zxx}}{|\B|}\\
				-\frac{b_x b_y |\bu|^2}{2} & b_x b_y u_x & b_x b_y u_y & b_x b_y u_z & \frac{3}{2}b_x b_y & -b_x b_y & b_x b_y B_x+\frac{\Gamma_x b_y - \DP b_{yxx}}{|\B|} & b_x b_y B_y+\frac{\Gamma_y b_x - \DP b_{xyy}}{|\B|} & b_x b_y B_z - \frac{2\DP b_{xyz}}{|\B|}\\
				-\frac{b_x b_z |\bu|^2}{2} & b_x b_z u_x & b_x b_z u_y & b_x b_z u_z & \frac{3}{2}b_x b_z & -b_x b_z & b_x b_z B_x+\frac{\Gamma_x b_z - \DP b_{zxx}}{|\B|} & b_x b_z B_y-\frac{2\DP b_{xyz}}{|\B|} & b_x b_z B_z + \frac{\Gamma_z b_x - \DP b_{xzz}}{|\B|}\\
				-\frac{2\pll b_x}{\rho}(\bhat\cdot\bu) & \frac{2\pll b_x}{\rho}b_x & \frac{2\pll b_x}{\rho}b_y & \frac{2\pll b_x}{\rho}b_z & 0 & 0 & 0 & 0 & 0\\
				\Upsilon_1^x & \Upsilon_2^x & \Upsilon_3^x & \Upsilon_4^x & \frac{3}{2} b_x(\bhat\cdot\bu) & -b_x(\bhat\cdot\bu) & b_x(\bhat\cdot\bu)B_x + \Theta^x_1 & b_x(\bhat\cdot\bu)B_y + \Theta^x_2 & b_x(\bhat\cdot\bu)B_z + \Theta^x_3\\
				0&0&0&0&0&0&0&0&0\\
				0&0&0&0&0&0&0&0&0\\
				0&0&0&0&0&0&0&0&0
			\end{pmatrix}
		\end{align*}
		and matrix $\bc_{y}(\con)$ is given below,
		\begin{align*}
			\begin{pmatrix}
				0&0&0&0&0&0&0&0&0\\
				-\frac{b_x b_y |\bu|^2}{2} & b_x b_y u_x & b_x b_y u_y & b_x b_y u_z & \frac{3}{2}b_x b_y & -b_x b_y & b_x b_y B_x+\frac{\Gamma_x b_y-\DP b_{yxx}}{|\B|} & b_x b_y B_y+\frac{\Gamma_y b_x-\DP b_{xyy}}{|\B|} & b_x b_y B_z -\frac{2\DP b_{xyz}}{|\B|}\\
				-\frac{b_y^2 |\bu|^2}{2} & b_y^2 u_x & b_y^2 u_y & b_y^2 u_z & \frac{3}{2} b_y^2 & -b_y^2 & b_y^2 B_x - \frac{2\DP b_{xyy}}{|\B|} & b_y^2 B_y + \frac{2\Gamma_y b_y}{|\B|} & b_y^2 B_z-\frac{2\DP b_{zyy}}{|\B|} \\
				-\frac{b_y b_z |\bu|^2}{2} & b_y b_z u_x & b_y b_z u_y & b_y b_z u_z & \frac{3}{2}b_y b_z & -b_y b_z & b_y b_z B_x - \frac{2\DP b_{xyz}}{|\B|} & b_y b_z B_y + \frac{\Gamma_y b_z - \DP b_{zyy}}{|\B|} & b_y b_z B_z + \frac{\Gamma_z b_y - \DP b_{yzz}}{|\B|}\\
				-\frac{2\pll b_y}{\rho}(\bhat\cdot\bu) & \frac{2\pll b_y}{\rho}b_x & \frac{2\pll b_y}{\rho}b_y & \frac{2\pll b_y}{\rho}b_z & 0 & 0 & 0 & 0 & 0\\
				\Upsilon_1^y & \Upsilon_2^y & \Upsilon^y_3 & \Upsilon_4^y & \frac{3}{2}b_y(\bhat\cdot\bu) & -b_y(\bhat\cdot\bu) & b_y(\bhat\cdot\bu)B_x + \Theta_1^y & b_y(\bhat\cdot\bu)B_y + \Theta_2^y & b_y(\bhat\cdot\bu)B_z + \Theta_3^y\\
				0&0&0&0&0&0&0&0&0\\
				0&0&0&0&0&0&0&0&0\\
				0&0&0&0&0&0&0&0&0
			\end{pmatrix}
		\end{align*}
	\end{landscape}
	where,\\
	$\DP = (\pll - \per),~\Gamma_i = \DP (1-b_i^2),~\forall i\in\left\{x,y,z\right\}$\\
	$b_{lmn}=b_l b_m b_n,~\forall~l,m,n\in\left\{x,y,z\right\}$\\
	$\Theta^x_1 = \{(\bhat\cdot\bu)+ b_x u_x\}\left(\frac{\Gamma_x}{|\B|}\right)- \DP\frac{b^2_x b_y u_y}{|\B|} - \DP\frac{b^2_x b_z u_z}{|\B|},$\\
	$\Theta^x_2 = \Gamma_y \left(\frac{b_x u_y}{|\B|}\right) - \DP\frac{b_x b_y b_z u_z}{|\B|} - \{(\bhat\cdot\bu)+ b_x u_x\}\frac{\DP b_x b_y}{|\B|},$\\
	$\Theta^x_3 = \Gamma_z \left(\frac{b_x u_z}{|\B|}\right) - \DP\frac{b_x b_y b_z u_y}{|\B|} - \{(\bhat\cdot\bu)+b_x u_x\}\frac{\DP b_x b_z}{|\B|}$,\\
	$\Theta_1^y = \Gamma_x \left(\frac{b_y u_x}{|\B|}\right) - \DP\frac{b_x b_y b_z u_z}{|\B|} - \{(\bhat\cdot\bu)+ b_y u_y\}\frac{\DP b_x b_y}{|\B|},$\\
	$\Theta_2^y = \{(\bhat\cdot\bu) + b_y u_y\}\left(\frac{\Gamma_y}{|\B|}\right)- \DP\frac{b^2_y b_x u_x}{|\B|} - \DP\frac{b^2_y b_z u_z}{|\B|},$\\
	$\Theta_3^y = \Gamma_z \left(\frac{b_y u_z}{|\B|}\right) - \DP\frac{b_x b_y b_z u_x}{|\B|} - \{(\bhat\cdot\bu)+b_y u_y\}\frac{\DP b_y b_z}{|\B|},$\\
	$\Upsilon_1^x = -b_x(\bhat\cdot\bu)\frac{|\bu|^2}{2}-\frac{\DP b_x (\bhat\cdot\bu)}{\rho},~\Upsilon_2^x = b_x(\bhat\cdot\bu)u_x + \frac{\DP b^2_x}{\rho},$\\
	$\Upsilon_3^x = b_x(\bhat\cdot\bu)u_y +\frac{\DP b_x b_y}{\rho},~\Upsilon_4^x = b_x(\bhat\cdot\bu)u_z+\frac{\DP b_x b_z}{\rho}$\\
	$\Upsilon_1^y = -b_y(\bhat\cdot\bu)\frac{|\bu|^2}{2} - \frac{\DP b_y(\bhat\cdot\bu)}{\rho},~\Upsilon_2^y = b_y(\bhat\cdot\bu)u_x+\frac{\DP b_x b_y}{\rho},$\\
	$\Upsilon_3^y = b_y(\bhat\cdot\bu)u_y+\frac{\DP b_y^2}{\rho}$ and $\Upsilon_4^y = b_y(\bhat\cdot\bu)u_z+\frac{\DP b_y b_z}{\rho}$.\\
	
	With this choice of the non-conservative matrices $\bc_x$ and $\bc_y$, we satisfy the equation \eqref{eq:ent_noncons_prod_def}. The complete proof is given in the Appendix \eqref{A.4}.

	\begin{remark}
		The existence of an entropy pair does not guarantee the symmetrizability of the system in the case of non-conservative hyperbolic systems. In particular, the CGL system is not symmetrizable. The proof is given in Appendix \eqref{A.5}
	\end{remark}
	%
	\subsection{Godunov's Symmetrization of the conservative part}
	We will now analyze the conservative part of the equation \eqref{eq:cgl_ref_noncons} for entropy and symmetrization, i.e. we consider,
	\begin{align}\label{eq:cgl_con_part}
		\frac{\p \con}{\p t}+\frac{\p \textbf{f}_x}{\p x}+\frac{\p \textbf{f}_y}{\p y}=0,   
	\end{align}
	
	\label{subsec:sym_cons_flux}
	\begin{definition}
		The conservation law \eqref{eq:cgl_con_part} is said to be symmetrizable if there exists a change of variable $\con \rightarrow \evar$ which symmetrizes it, i.e., \eqref{eq:cgl_con_part} with a change of variable can be written as,
		$$
		\frac{\p \con}{\p \evar}\frac{\p \evar}{\p {t}}+\frac{\p \textbf{f}_x}{\p \con}\frac{\p \con}{\p \evar}\frac{\p \evar}{\p {x}}+\frac{\p \textbf{f}_y}{\p \con}\frac{\p \con}{\p \evar}\frac{\p \evar}{\p {y}}=0.
		$$
		where $\frac{\p \con}{\p \evar}$ is a symmetric positive definite matrix.  Furthermore, $\frac{\p \textbf{f}_x}{\p \con}  \frac{\p \con}{\p \evar}$ and $\frac{\p \textbf{f}_y}{\p \con}  \frac{\p \con}{\p \evar}$ are symmetric matrices.
	\end{definition}
	The symmetrization of the hyperbolic conservation law and the existence of an entropy pair are closely related. Let us recall the following Theorem from \cite{godlewski2013numerical}.
	\begin{thm}
		A necessary and sufficient condition for the system to have a strictly convex entropy $\ent(\con)$ is that there exists a change of dependent variables $\con= \con(\evar)$ that symmetrizes system \eqref{eq:cgl_con_part}. 
	\end{thm}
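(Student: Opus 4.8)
The plan is to prove the two implications separately by the classical Godunov--Mock construction, using the Legendre transform and the Poincar\'e lemma. Throughout I write $\evar=\ent'(\con)^{\top}$ for the entropy variables; by \eqref{eq:ent_def} the entropy pair satisfies $\entf_x'(\con)=\evar^{\top}\f_x'(\con)$ and $\entf_y'(\con)=\evar^{\top}\f_y'(\con)$.

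($\Rightarrow$) Suppose $\ent$ is strictly convex, so $\ent''(\con)$ is symmetric positive definite and $\con\mapsto\evar=\ent'(\con)$ is a diffeomorphism onto its image; I would take $\evar$ as the new dependent variable. Introduce the dual potential $\phi(\evar)=\evar^{\top}\con(\evar)-\ent(\con(\evar))$; a short differentiation using $\evar=\ent'(\con)$ gives $\phi'(\evar)=\con(\evar)$, hence $\df{\con}{\evar}=\phi''(\evar)$ is symmetric positive definite. For the fluxes, set $\psi_x(\evar)=\evar^{\top}\f_x-\entf_x$ and $\psi_y(\evar)=\evar^{\top}\f_y-\entf_y$ (evaluated at $\con(\evar)$); differentiating and cancelling the $\f_x'$ and $\entf_x'$ contributions by \eqref{eq:ent_def} leaves $\psi_x'(\evar)=\f_x$ and $\psi_y'(\evar)=\f_y$, so $\df{\f_x}{\con}\df{\con}{\evar}=\df{\f_x}{\evar}=\psi_x''(\evar)$ and similarly for $y$ are symmetric. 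Substituting $\con=\con(\evar)$ into \eqref{eq:cgl_con_part} then produces exactly the symmetrized form in the definition preceding this theorem.

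($\Leftarrow$) Conversely, assume a change of variables $\con=\con(\evar)$ makes $A_0(\evar):=\df{\con}{\evar}$ symmetric positive definite and $\df{\f_x}{\con}A_0$, $\df{\f_y}{\con}A_0$ symmetric. Symmetry of $A_0$ says the $1$-form $\con\cdot d\evar$ is closed; since the state space (the convex admissible set $\Omega$ of \eqref{eq:cgl_domain_new}) is simply connected, it is exact, giving $\con=\phi'(\evar)$ with $\phi''=A_0$ strictly convex. Define $\ent(\con)=\evar(\con)^{\top}\con-\phi(\evar(\con))$ by Legendre inversion; then $\ent'(\con)=\evar$ and $\ent''(\con)=A_0^{-1}$ is symmetric positive definite, so $\ent$ is a strictly convex entropy. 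Likewise, symmetry of $\df{\f_x}{\evar}=\df{\f_x}{\con}A_0$ makes $\f_x\cdot d\evar$ closed, hence $\f_x=\psi_x'(\evar)$; putting $\entf_x(\con)=\evar^{\top}\f_x-\psi_x$ (and analogously $\entf_y$) and differentiating recovers $\entf_x'(\con)=\evar^{\top}\f_x'(\con)$, so $(\ent,\entf_x,\entf_y)$ is an entropy pair.

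The only nonroutine ingredients are the chain-rule cancellations that yield $\psi_x'=\f_x$ (and, in the converse, $\entf_x'=\evar^{\top}\f_x'$) and the appeal to the Poincar\'e lemma to pass from closed to exact forms --- this is where convexity, hence simple connectedness, of $\Omega$ matters. I expect the genuine obstacle to be not the proof but its scope: the theorem applies verbatim only to the conservative subsystem \eqref{eq:cgl_con_part}, whereas the full CGL system carries the extra constraint \eqref{eq:ent_noncons_prod_def} on $\bc_x,\bc_y$, and in fact (cf.\ the remark just above) the complete system is \emph{not} symmetrizable despite admitting the entropy pair \eqref{entropy-pair}.
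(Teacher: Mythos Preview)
The paper does not prove this theorem; it is simply quoted from \cite{godlewski2013numerical} as a classical result, so there is no ``paper's own proof'' to compare against. Your argument is the standard Godunov--Mock/Friedrichs--Lax proof via Legendre duality and the Poincar\'e lemma, and it is correct.

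Two minor remarks on presentation and context. First, you reuse the symbol $\phi$ for the Legendre dual of $\ent$; in this paper $\phi(\evar)=2\beta_\perp(\bu\cdot\B)$ is already reserved for the Godunov source, and the dual potential is denoted $\mathcal U(\evar)$ (see \eqref{eq:congugate}). Second, your closing caveat about scope is slightly misdirected: the immediate obstruction the paper raises after the theorem is not the non-conservative matrices $\bc_x,\bc_y$ but that even the bare conservative part \eqref{eq:cgl_con_part} fails the compatibility \eqref{eq:ent_def} --- an extra $\frac{2\rho}{\per}(\bu\cdot\B)B_\alpha'(\con)$ term appears --- so the theorem's hypothesis is not met for the pair \eqref{entropy-pair}, and the Powell/Godunov source $\phi'(\evar)^{\top}\nabla\cdot\B$ must be appended before the change of variables $\con\to\evar$ actually symmetrizes anything.
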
 
	We note that for the system \eqref{eq:cgl_con_part},
	\begin{align*}
		\frac{\p \textbf{f}_\alpha}{\p \evar}\neq\bigg(\frac{\p \textbf{f}_\alpha}{\p \evar}\bigg)^{\top},~~\alpha=x,y.
	\end{align*}
	Furthermore, the entropy-pair $(\ent,\entf_x,\entf_y)$  does not satisfy \eqref{eq:ent_def}. In fact, we have,
	\begin{align*}
		\frac{\p \entf_\alpha}{\p \con}=\evar^\top\cdot\frac{\p \textbf{f}_\alpha}{\p \con}+\frac{2\rho}{ \per}(\bu\cdot\B)B_\alpha'(\con).
	\end{align*}
	This is similar to the case of MHD equations \cite{chandrashekar2016entropy}. Hence, the conservative part of the CGL system is not symmetrizable. We follow Godunov's approach to attain the symmetrizability of the conservative part of the CGL system with divergence constraints.  We now follow \cite{chandrashekar2016entropy}, to symmetrize \eqref{eq:cgl_con_part}, which based on the earlier works in \cite{godunov,barth1999numerical,barth2006role}. We will consider the equations in $x$-direction only. The discussion can be extended to include $y$-directional flux, similarly.
	
	Consider the modified conservative system \eqref{eq:cgl_con_part} in $x$-direction, 
	\begin{equation}
		\frac{\p \con}{\p t}+\frac{\p \textbf{f}_x}{\p x}+\phi'(\evar)^\top \p_x B_x=0,\label{eq:cgl_modified}
	\end{equation} 
	where $\phi(\evar)$ is a homogeneous function of degree one, i.e.,
	\begin{equation}
		\evar \phi ' (\evar)=\phi(\evar). \label{eq:phi}
	\end{equation}
	The above modification is consistent because of the divergence-free constraint. If the system \eqref{eq:cgl_modified} is symmetrized by the transformation $\con \rightarrow \evar$, then it can be inferred that there are twice differentiable functions $\mathcal{U}(\evar)$ and $\mathcal{F}_x(\evar)$, where $\mathcal{U}(\evar)$ is a strictly convex function such that
	\begin{equation}\label{eq:congugate}
		\con=\mathcal{U}'(\evar)^\top, \quad \textbf{f}_x=\mathcal{F}_x'(\evar)^\top-\phi'(\evar)^\top B_x. 
	\end{equation}
	Combining \eqref{eq:cgl_modified} and \eqref{eq:congugate}, we get the symmetric form,
	$$ \mathcal{U}''(\evar) \frac{\p \evar}{\p t}+(\mathcal{F}_x''(\evar)-\phi''(\evar)B_x)\frac{\p \evar}{\p x} =0.$$
	Here, $ \mathcal{U}''(\evar)$  is a symmetric positive definite matrix and $(\mathcal{F}_x''(\evar)-\phi''(\evar)B_x)$ is a symmetric matrix. 
	Let us also define,
	\begin{equation}
		\ent(\con)=\evar(\con)\cdot\con-\mathcal{U}(\evar(\con))
		, \quad
		\entf_x(\con)=\evar(\con)\cdot\mathcal{F}_x'(\evar(\con))-\mathcal{F}_x(\evar(\con)) \label{eq:ent_ent_flux}
	\end{equation}	
	On differentiating the above equation and using \eqref{eq:congugate} we get
	\begin{equation}
		\label{eq:conj_entf}
		\ent'(\con)=\evar'(\con)\cdot\con+\evar(\con)-\mathcal{U}'(\evar)\evar'(\con) =
		\evar'(\con)\cdot\con+\evar(\con)-\con\cdot\evar'(\con)= \evar
	\end{equation}
	i.e. $$\evar=\ent'(\con)^\top.$$
	
	Hence, the change of variable that symmetrizes the equations is entropy variable $\evar (\con)$. Now differentiating $\entf_x$ from   
	\eqref{eq:conj_entf},
	\begin{align}
		\entf_x'(\con)&=\bigg(\evar(\con)\cdot\mathcal{F}_x'(\evar(\con))-\mathcal{F}_x(\evar(\con)) \bigg)' \nonumber\\
		&=\bigg(\evar(\con)\cdot(\textbf{f}_x(\evar)^\top+\phi'(\evar)^\top B_x)-\mathcal{F}_x(\evar(\con)) \bigg)' \nonumber\\
		&=\bigg(\evar(\con)\cdot\textbf{f}_x+\phi (\evar) B_x-\mathcal{F}_x(\evar(\con)) \bigg)' \quad\text{(from eq. \eqref{eq:phi})} \nonumber\\
		&=\bigg(\evar(\con)\cdot \textbf{f}_x'(\con)+\phi (\evar) B_x'(\con) \bigg)
		=\bigg(\ent'(\con)\cdot\textbf{f}_x'(\con)+\phi (\evar) B_x'(\con) \bigg). \label{eq:cgl_entf'}
	\end{align}
	Now, taking the dot product of $\evar$ with \eqref{eq:cgl_modified}, we get,
	\begin{align*}
		\evar\cdot\bigg(\frac{\p \con}{\p t}+\frac{\p \textbf{f}_x}{\p x}+\phi'(\evar) \nabla \cdot \boldsymbol{B}\bigg) 
		&=
		\frac{\p \ent}{\p t}+\bigg({\ent}'(\con)\cdot\frac{\p \textbf{f}_x}{\p \con}+\phi(\evar)   {B'_x(\con)} \bigg)\frac{\p \con}{\p x}\\
		&=\frac{\p \ent}{\p t}+\frac{\p \entf_x}{\p x}=0. \quad \text{ (using \eqref{eq:cgl_entf'})}
	\end{align*}
	Now using,\eqref{eq:cgl_entf'} with \eqref{eq:congugate}, we get,
	\begin{equation}
		\phi (\evar)=\frac{2\rho}{ \per}(\bu\cdot\B)=2\beta_\perp (\bu\cdot \B).
	\end{equation}
	We also note that the function $\phi(\evar)$ can be written using the components of $\evar$, which results in,
	$$\phi(\evar)=-\frac{V_2 V_7+V_3 V_8+V_4 V_9}{V_6}.$$ We note that this is a homogeneous function of degree one. Furthermore,
	$$\phi'(\evar)=[0, \B,0,\bu\cdot\B,\bu].$$
	Now, the entropy flux is given by,
	\begin{equation}
		\label{eq:ent_x_flux_mod}
		\entf_x=\evar \cdot \textbf{f}_x+\phi B_x- \mathcal{F}_x
	\end{equation}
	with 
	$$\mathcal{U}=\evar \cdot \con-\ent=2\rho+\beta_{\perp}|\B|^2,$$
	and
	$$\mathcal{F}_x=\evar \cdot \textbf{f}_x+\phi B_x -\entf_x=2\rho u_x+\beta_{\perp} u_x|\B|^2= \mathcal{U}u_x.$$
	Similarly, for the $y$-direction, we get,
	\begin{equation}
		\label{eq:ent_y_flux_mod}
		\entf_y=\evar \cdot \textbf{f}_y+\phi B_y- \mathcal{F}_y,
	\end{equation}
	with 
	$$\mathcal{F}_y=\evar \cdot \textbf{f}_y+\phi B_y -\entf_y=2\rho u_y+\beta_{\perp} u_y|\B|^2= \mathcal{U}u_y.$$
	We will now construct entropy stable numerical schemes for CGL equations by adding $\phi'(\evar)^\top\nabla\cdot \B$,   i.e. we will consider,
	\begin{align}\label{eq:cgl_ref_godunov}
		\frac{\p \con}{\p t}+\frac{\p \f_{x}}{\p x} + \frac{\p \f_{y}}{\p y} + \bc_{x}(\con)\frac{\p \con}{\p x} + \bc_{y}(\con)\frac{\p \con}{\p y}+\phi'(\evar)^\top ( \nabla\cdot \B)=0.
	\end{align}
	
	\section{Semi-discrete entropy stable numerical schemes}
	\label{sec:semi_discrete}
	In this Section, we will develop semi-discrete numerical schemes for the system \eqref{eq:cgl_ref_godunov}, which satisfy a discrete version of the entropy inequality~\eqref{eq:ent_ineql_2d}, with modified entropy fluxes \eqref{eq:ent_x_flux_mod} and \eqref{eq:ent_y_flux_mod}. 
	
	We consider a uniform mesh of size $\Delta x\times \Delta y$. The cell centers are represented as $(i,j)$ with cell interfaces for the cell $(i,j)$ is given by $(\imh,j)$ and $(\iph,j)$ in $x$-direction and $(i,\jmh)$ and $(i,\jph)$ in $y$-direction. Let us define the arithmetic averages across the interfaces for the quantity $a$ as
	$$
	\bar{a}_{\iph,j}= \frac{a_{i+1,j} + a_{i,j}}{2}\qquad \text{ and }\qquad  \bar{a}_{i,\jph}= \frac{a_{i,j+1} + a_{i,j}}{2}.
	$$
	Similarly, let us denote the jumps across the interfaces in $x$ and $y$-directions as,
	$$
	[\![a]\!]_{\iph,j} =a_{i+1,j}-a_{i,j} \qquad \text{ and } \qquad [\![a]\!]_{i,\jph} =a_{i,j+1}-a_{i,j}.
	$$
	The semi-discrete finite difference schemes for \eqref{eq:cgl_ref_godunov} is given by,
	\begin{align}\label{eq:semi-discrete_fd}
		\frac{d}{d t}\con_{i,j}&+\frac{\F_{x, i+\frac{1}{2}, j}-\F_{x, i-\frac{1}{2}, j}}{\Delta x}+\frac{\F_{y, i, j+\frac{1}{2}}-\F_{y, i, j-\frac{1}{2}}}{\Delta y}\nonumber\\
		&+\phi'(\evar_{i, j})^\top\bigg(\left(\frac{\p B_x}{\p x}\right)_{i,j} + \left(\frac{\p B_y}{\p y}\right)_{i,j}\bigg)\nonumber \\
		&+\bc_x(\con_{i,j})\left(\frac{\p \con}{\p x}\right)_{i,j} +\bc_y(\con_{i,j}) \left(\frac{\p \con}{\p y}\right)_{i,j}=0.
	\end{align}
	
	Here $\F_{x, i+\frac{1}{2}, j}$ and $\F_{ y,i, j+\frac{1}{2}}$ are the numerical fluxes in $x$ and $y$ directions, respectively. Furthermore, $\left(\frac{\p B_x}{\p x}\right)_{i,j},\left(\frac{\p B_y}{\p y}\right)_{i,j},\left(\frac{\p \con}{\p x}\right)_{i,j}$ and $\left(\frac{\p \con}{\p y}\right)_{i,j}$ are discretize using central difference schemes of suitable orders. We will first construct an entropy conservative scheme for \eqref{eq:cgl_ref_godunov} by designing entropy conservative fluxes.

	\subsection{Entropy conservative schemes}
	\label{subsec:ent_cons_schemes}
	To design entropy conservative schemes of the form \eqref{eq:semi-discrete_fd}, we will combine two key ideas. The first is to design the entropy conservative scheme for the symmetrized conservative part. We achieve this by following \cite{chandrashekar2016entropy}, where a suitable entropy conservative flux is used to design second-order numerical schemes. Here, we will also extend this scheme to higher-order schemes as well. The second idea is using the fact that the nonconservative terms in reformulated equations do not contribute to the entropy evolution.
	
	Following \cite{tadmor2003entropy,chandrashekar2016entropy}, we have the following result:
	
	\begin{thm}\label{thm:4.1}
		Assume that the numerical fluxes $\tilde{\F}_{x,i+\frac{1}{2}, j}, ~\tilde{\F}_{ y,i, j+\frac{1}{2}}$ consistent with continuous fluxes in $x$ and $y$ direction satisfy,
		\begin{align}
			\quad \jump{\evar}_{\iph,j}  \cdot \tilde{\F}_{x,i+\frac{1}{2}, j}&=\jump{\mathcal{F}_x}_{\iph,j}- \jump{\phi}_{\iph,j} \bar{B}_{x, i+\frac{1}{2}, j},\label{eq:consrvative_flux_x}&\\
			\quad\jump{\evar}_{i,\jph}  \cdot \tilde{\F}_{y,i,j+\frac{1}{2}}&=\jump{\mathcal{F}_y}_{i,\jph}- \jump{\phi}_{i,\jph} \bar{B}_{y,i,j +\frac{1}{2}}.\label{eq:consrvative_flux_y}
		\end{align}
		Furthermore, the non-conservative terms $\left(\frac{\p B_x}{\p x}\right)_{i,j},\left(\frac{\p B_y}{\p y}\right)_{i,j},\left(\frac{\p \con}{\p x}\right)_{i,j}$ and $\left(\frac{\p \con}{\p y}\right)_{i,j}$ are discretized with second order central difference formula. Then the semi-discrete finite difference scheme \eqref{eq:semi-discrete_fd} is second-order accurate and entropy conservative, i.e., it satisfies
		\begin{align}
			\label{eq:ent_eql_2nd_order}
			\frac{d}{d t} \ent\left(\con_{i, j}\right)+\frac{\tilde{\entf}_{x, i+\frac{1}{2}, j}-\tilde{\entf}_{x, i-\frac{1}{2}, j}}{\Delta x}+\frac{\tilde{\entf}_{y, i, j+\frac{1}{2}}-\tilde{\entf}_{y, i, j-\frac{1}{2}}}{\Delta y}=0
		\end{align}
		where the numerical entropy fluxes consistent with \eqref{eq:ent_x_flux_mod} and \eqref{eq:ent_y_flux_mod} are given by
		\begin{align}
			\label{eq:consrvative_ent_flux_x}
			&\tilde{\entf}_{x, i+\frac{1}{2}, j}=\tilde{\entf}_{x}(\con_{i,j},\con_{i+1,j})=\bar{\evar}_{i+\frac{1}{2}, j} \cdot \tilde{\F}_{x,i+\frac{1}{2}, j}+\bar{\phi}_{i+\frac{1}{2}, j} \bar{B}_{x, i+\frac{1}{2}, j}-\bar{\mathcal{F}}_{x, i+\frac{1}{2}, j}, \\
			&\tilde{\entf}_{y, i, j+\frac{1}{2}}=\tilde{\entf}_{y}(\con_{i,j},\con_{i,j+1})=\bar{\evar}_{i, j+\frac{1}{2}} \cdot \tilde{\F}_{ y,i, j+\frac{1}{2}}+\bar{\phi}_{i, j+\frac{1}{2}} \bar{B}_{y, i, j+\frac{1}{2}}-\bar{\mathcal{F}}_{y, i, j+\frac{1}{2}}.		\label{eq:consrvative_ent_flux_y}
		\end{align}
	\end{thm}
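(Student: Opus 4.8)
The statement bundles two claims: that the scheme \eqref{eq:semi-discrete_fd} with these ingredients is second-order accurate, and that it is entropy conservative in the sense of \eqref{eq:ent_eql_2nd_order}. The accuracy claim is routine: the numerical fluxes $\tilde{\F}_x,\tilde{\F}_y$ are consistent two-point fluxes that are symmetric in their two arguments (the explicit entropy conservative flux built in the next subsection has this property), so by the standard argument the flux differences $(\tilde{\F}_{x,\iph,j}-\tilde{\F}_{x,\imh,j})/\Dx$ and $(\tilde{\F}_{y,i,\jph}-\tilde{\F}_{y,i,\jmh})/\Dy$ are second-order approximations of $\p_x\f_x$ and $\p_y\f_y$; the remaining pieces are second-order central differences of $B_x,B_y,\con$ multiplied by pointwise evaluations of $\phi'(\evar_{i,j})$, $\bc_x(\con_{i,j})$, $\bc_y(\con_{i,j})$, hence the whole right-hand side is a second-order discretization of \eqref{eq:cgl_ref_godunov}. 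I would devote the bulk of the proof to the entropy identity, obtained by taking the inner product of \eqref{eq:semi-discrete_fd} with the entropy variable $\evar_{i,j}=\ent'(\con_{i,j})^\top$ and showing that every group of terms telescopes.

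First the easy contributions. The time derivative gives $\evar_{i,j}\cdot\tfrac{d}{dt}\con_{i,j}=\tfrac{d}{dt}\ent(\con_{i,j})$ by the chain rule. The two non-conservative matrix terms vanish identically: by \eqref{eq:ent_noncons_prod_def}, $\evar_{i,j}^\top\bc_x(\con_{i,j})=\evar_{i,j}^\top\bc_y(\con_{i,j})=0$, which is exactly the property for which the reformulation \eqref{eq:cgl_ref_noncons} was engineered. For the Godunov source, homogeneity of degree one, \eqref{eq:phi}, gives $\evar_{i,j}^\top\phi'(\evar_{i,j})^\top=\phi(\evar_{i,j})$, and the second-order central stencil expresses the derivative as a difference of interface averages, $\left(\frac{\p B_x}{\p x}\right)_{i,j}=\frac{\bar B_{x,\iph,j}-\bar B_{x,\imh,j}}{\Dx}$ (and similarly in $y$), so this term becomes $\phi(\evar_{i,j})\frac{\bar B_{x,\iph,j}-\bar B_{x,\imh,j}}{\Dx}+\phi(\evar_{i,j})\frac{\bar B_{y,i,\jph}-\bar B_{y,i,\jmh}}{\Dy}$.

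For the conservative flux divergence I would use the Tadmor telescoping: writing $\evar_{i,j}=\bar\evar_{\iph,j}-\tfrac12\jump{\evar}_{\iph,j}=\bar\evar_{\imh,j}+\tfrac12\jump{\evar}_{\imh,j}$ and invoking the condition \eqref{eq:consrvative_flux_x} to replace $\jump{\evar}_{\iph,j}\cdot\tilde{\F}_{x,\iph,j}$ by $\jump{\mathcal{F}_x}_{\iph,j}-\jump{\phi}_{\iph,j}\bar B_{x,\iph,j}$, one obtains that $\evar_{i,j}\cdot(\tilde{\F}_{x,\iph,j}-\tilde{\F}_{x,\imh,j})$ equals $\bigl(\bar\evar_{\iph,j}\cdot\tilde{\F}_{x,\iph,j}-\bar\evar_{\imh,j}\cdot\tilde{\F}_{x,\imh,j}\bigr)-\bigl(\bar{\mathcal{F}}_{x,\iph,j}-\bar{\mathcal{F}}_{x,\imh,j}\bigr)+\tfrac12\bigl(\jump{\phi}_{\iph,j}\bar B_{x,\iph,j}+\jump{\phi}_{\imh,j}\bar B_{x,\imh,j}\bigr)$, where the $\mathcal{F}_x$-jumps were telescoped using the arithmetic average $\bar{\mathcal{F}}_{x,\iph,j}$; the $y$-direction is identical. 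The remaining step is to absorb the residual $\tfrac12(\jump{\phi}_{\iph,j}\bar B_{x,\iph,j}+\jump{\phi}_{\imh,j}\bar B_{x,\imh,j})$ into the Godunov source contribution $\phi(\evar_{i,j})(\bar B_{x,\iph,j}-\bar B_{x,\imh,j})$: using once more $\phi(\evar_{i,j})=\bar\phi_{\iph,j}-\tfrac12\jump{\phi}_{\iph,j}=\bar\phi_{\imh,j}+\tfrac12\jump{\phi}_{\imh,j}$, these collapse to $\bar\phi_{\iph,j}\bar B_{x,\iph,j}-\bar\phi_{\imh,j}\bar B_{x,\imh,j}$, i.e. precisely the $\phi B_x$ term of $\tilde\entf_x$. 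Collecting all pieces (and the analogous ones in $y$) produces \eqref{eq:ent_eql_2nd_order} with the numerical entropy fluxes \eqref{eq:consrvative_ent_flux_x}–\eqref{eq:consrvative_ent_flux_y}; consistency of those fluxes follows since when $\con_{i,j}=\con_{i+1,j}$ one has $\tilde{\F}_x=\f_x$, $\bar\evar=\evar$, $\bar\phi=\phi$, $\bar B_x=B_x$, $\bar{\mathcal{F}}_x=\mathcal{F}_x$, so $\tilde\entf_x$ reduces to $\evar\cdot\f_x+\phi B_x-\mathcal{F}_x=\entf_x$ by \eqref{eq:ent_x_flux_mod}.

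The delicate point — and the one I would flag as the main obstacle — is the interplay between the conservative-flux telescoping and the Godunov term: the cancellation works only because the second-order stencil for $\p_x B_x$ is taken to be exactly the difference of arithmetic interface averages $\bar B_x$ (a different, equally second-order stencil would leave an uncancelled, generically nonzero, $O(\Dx^2)$ entropy production), and one has to be meticulous with the signs when splitting both $\evar_{i,j}$ and $\phi(\evar_{i,j})$ at the $\iph$ and $\imh$ interfaces so that the $\jump{\phi}\bar B$ remainders produced by \eqref{eq:consrvative_flux_x} line up with those produced by the source term. I would also double-check the transpose placement in $\evar_{i,j}^\top\phi'(\evar_{i,j})^\top=\phi(\evar_{i,j})$, since $\phi'(\evar)$ enters \eqref{eq:semi-discrete_fd} transposed, and verify that the condition \eqref{eq:ent_noncons_prod_def} is indeed what kills the $\bc_x,\bc_y$ contributions (this is where Appendix A.4 is used). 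Everything else is bookkeeping.
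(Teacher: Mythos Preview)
Your proposal is correct and follows essentially the same approach as the paper's proof. The only difference is presentational: the paper starts from the definition of $\tilde{\entf}_{x,\iph,j}$ and works \emph{backward}, expanding $\bar{\mathcal{F}}_{x,\iph,j}-\bar{\mathcal{F}}_{x,\imh,j}$ via \eqref{eq:consrvative_flux_x} to arrive at the key identity $\tilde{\entf}_{x,\iph,j}-\tilde{\entf}_{x,\imh,j}=\evar_{i,j}\cdot(\tilde{\F}_{x,\iph,j}-\tilde{\F}_{x,\imh,j})+\phi_{i,j}(\bar B_{x,\iph,j}-\bar B_{x,\imh,j})$, whereas you start from $\evar_{i,j}\cdot(\text{scheme})$ and telescope \emph{forward} to the same identity; your additional remarks on accuracy, consistency of $\tilde{\entf}_x$, and the necessity of the specific central stencil for $\p_x B_x$ are all on point and in fact more explicit than the paper's treatment.
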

	\begin{proof}
		Following \cite{chandrashekar2016entropy}, we note that,
		\begin{align}\label{eq:theorem4p1_1}
			\tilde{\entf}_{x, i+\frac{1}{2}, j}-\tilde{\entf}_{x, i-\frac{1}{2}, j} \nonumber&= \bar{\evar}_{i+\frac{1}{2}, j} \cdot \tilde{\F}_{x,i+\frac{1}{2}, j} - \bar{\evar}_{i-\frac{1}{2}, j} \cdot \tilde{\F}_{x,i-\frac{1}{2}, j}\\
			&+\bar{\phi}_{i+\frac{1}{2}, j} \bar{B}_{x, i+\frac{1}{2}, j} - \bar{\phi}_{i-\frac{1}{2}, j} \bar{B}_{x, i-\frac{1}{2}, j} -\left(\bar{\mathcal{F}}_{x, i+\frac{1}{2}, j} - \bar{\mathcal{F}}_{x, i-\frac{1}{2}, j}\right)
		\end{align}
		Now,
		\begin{align*}
			\bar{\mathcal{F}}_{x, i+\frac{1}{2}, j}-\bar{\mathcal{F}}_{x, i-\frac{1}{2}, j} &= 
			\frac{1}{2}\left(\mathcal{F}_{x, i+1, j}+\mathcal{F}_{x, i, j}\right) +\frac{1}{2}\left(\mathcal{F}_{x, i, j}+\mathcal{F}_{x, i-1, j}\right).
		\end{align*}
		Using \eqref{eq:consrvative_flux_x}, we get,
		\begin{align}\label{eq:theorem4p1_2}
			\bar{\mathcal{F}}_{x, i+\frac{1}{2}, j}-\bar{\mathcal{F}}_{x, i-\frac{1}{2}, j} \nonumber&= \frac{1}{2}\left(\left(\evar_{i+1, j}-\evar_{i, j}\right) \cdot \tilde{\F}_{x,i+\frac{1}{2}, j} + \left(\phi_{i+1, j}-\phi_{i, j}\right) \bar{B}_{x, i+\frac{1}{2}, j}\right)\\
			&+\frac{1}{2}\left(\left(\evar_{i, j}-\evar_{i-1, j}\right) \cdot \tilde{\F}_{x,i-\frac{1}{2}, j} + \left(\phi_{i, j}-\phi_{i-1, j}\right) \bar{B}_{x, 1-\frac{1}{2}, j}\right).
		\end{align}
		Substituting \eqref{eq:theorem4p1_2} in \eqref{eq:theorem4p1_1} results in, 
		\begin{align}
			\label{eq:entf_diff}
			\tilde{\entf}_{x, i+\frac{1}{2}, j}-\tilde{\entf}_{x, i-\frac{1}{2}, j} &= \evar_{i,j}\cdot\left(\tilde{\F}_{x,i+\frac{1}{2}, j}-\tilde{\F}_{x,i-\frac{1}{2}, j}\right) + \phi_{i,j}\left(\bar{B}_{x, i+\frac{1}{2}, j}-\bar{B}_{x, i-\frac{1}{2}, j}\right)\\
			&= \evar_{i,j}\cdot\left(\tilde{\F}_{x,i+\frac{1}{2}, j}-\tilde{\F}_{x,i-\frac{1}{2}, j}\right) +\Dx \evar_{i,j}\cdot \phi'(\con_{i,j})\left( \left(\frac{\p B_x}{\p x}\right)_{i,j}\right)\nonumber\\
			&+ \Dx \evar_{i,j}^\top \bc_x(\con_{i,j})\left(\frac{\p \con}{\p x}\right)_{i,j}\nonumber.
		\end{align}
		Similarly, using \eqref{eq:consrvative_flux_y}, we can get, 
		\begin{align*}
			\tilde{\entf}_{y, i,j+\frac{1}{2}}-\tilde{\entf}_{y, i,j-\frac{1}{2}} &= \evar_{i,j}\cdot\left(\tilde{\F}_{y,i,j+\frac{1}{2}}-\tilde{\F}_{y,i,j-\frac{1}{2}}\right) +\Dy \evar_{i,j}\cdot \phi'(\con_{i,j})\left( \left(\frac{\p B_y}{\p y}\right)_{i,j}\right)\\
			&+ \Dy \evar_{i,j}^\top \bc_y(\con_{i,j})\left(\frac{\p \con}{\p y}\right)_{i,j}.
		\end{align*}
		Combining, we get the discrete entropy equality \eqref{eq:ent_eql_2nd_order}. 
	\end{proof}
	%
	%
	%
	\subsubsection{Entropy conservative fluxes}
	We now aim to design second order entropy conservative fluxes $\tilde{\F}_{x,i+\frac{1}{2}, j}$ and  $\tilde{\F}_{y,i,j+\frac{1}{2},}$ which satisfy  \eqref{eq:consrvative_flux_x} and  \eqref{eq:consrvative_flux_y}, respectively. To simplify the presentation, we will drop the mesh index and consider left and right states denoted by $\con_l$ and $\con_r$. Using this, let us denote,
	$$
	\jump{a}= a_r-a_l \qquad \text{ and } \qquad \bar{a} = \frac{a_r+a_l}{2}.
	$$
	Then we aim to construct $\tilde{\F}_{x} (\con_l,\con_r)$, such that,
	$$
	\jump{\evar}\cdot \tilde{\F}_x = \jump{\mathcal{F}_x} - \bar{B}_x\jump{\phi}.
	$$
	We will first proceed in $x$-direction. The case for $y$-direction is similar. We also note that the above equation is a scalar. So, we need a vector $\tilde{\F}_x$, satisfying the above scalar equations.  Hence, in general, we can have many entropy-conservative numerical fluxes. For Euler equations of compressible equations, several such fluxes are presented in \cite{tadmor2003entropy,chandrashekar2013kinetic,ismail2009affordable}. 
	$$
	\jump{ab} = \bar{a}\jump{b}+\bar{b}\jump{a}.
	$$
	Following \cite{roe2006affordable}, let us introduce the logarithmic average.
	$$
	a^{\ln} = \frac{\jump{a}}{\jump{\ln{a}}}.
	$$
	When $a_l\approx a_r,$ we use the Taylor expansion-based formula from \cite{ismail2009affordable}.  Let us first simplify the $\jump{s}$, to get,
	\begin{align*}
		[\![ s]\!] & =[\![\left(\ln\left(\pll\per^2/\rho^3\right)-2\ln(\rho)\right)]\!]                                                           & \\
		& =[\![\left(\ln\left(\pll/\rho\right)+\ln\left(\per^2/\rho^2\right)-2\ln(\rho)\right)]\!]                                               & \\
		&=-[\![\ln(\beta_{\parallel})]\!]-2[\![\ln(\beta_{\perp})]\!]-2[\![\ln(\rho)]\!]                            & \\                                                    
		& =-\dfrac{[\![\beta_{\parallel}]\!]}{{\beta}^{\ln}_{\parallel}}-2\dfrac{[\![ \beta_{\perp}]\!]}{{\beta}^{\ln}_{\perp}}-2\dfrac{[\![ \rho]\!]}{{\rho}^{\ln}}
	\end{align*}
	Following the approach in \cite{chandrashekar2013kinetic}, we expand the jumps of entropy variables $\evar$ in terms of the jumps in $\rho, u_{x}, u_{y}, u_{z}, \beta_\parallel, \beta_\perp, B_{x}, B_{y}, B_{z}$ as
	\begin{align*}
		[\![ \evar]\!] =\left[\begin{array}{c}
			\frac{2}{\rho^{\ln}} [\![ \rho]\!] +\frac{1}{{\beta}^{\ln}_\parallel} [\![ \beta_\parallel]\!] - 2 \bar{\beta}_\perp\left(\bar{u}_{x} [\![ u_{x}]\!] +\bar{u}_{y} [\![ u_{y}]\!] +\bar{u}_{z} [\![ u_{z}]\!] \right)+\left[\frac{2}{{\beta}^{\ln}_\perp}-\overline{|u|^{2}}\right][\![ \beta_\perp]\!] \\
			2 \bar{\beta}_\perp [\![ u_{x}]\!]+2 \bar{u}_{x} [\![ \beta_\perp]\!] \\
			2 \bar{\beta}_\perp [\![ u_{y}]\!]+2 \bar{u}_{y} [\![ \beta_\perp]\!] \\
			2 \bar{\beta}_\perp [\![ u_{z}]\!]+2 \bar{u}_{z} [\![ \beta_\perp]\!] \\
			-[\![ \beta_\parallel]\!] +[\![ \beta_\perp]\!]\\
			-2 [\![ \beta_\perp]\!] \\
			2 \bar{\beta}_\perp [\![ B_{x}]\!]+2 \bar{B}_{x} [\![ \beta_\perp]\!] \\
			2 \bar{\beta}_\perp [\![ B_{y}]\!]+2 \bar{B}_{y} [\![ \beta_\perp]\!] \\
			2 \bar{\beta}_\perp [\![ B_{z}]\!]+2 \bar{B}_{z} [\![ \beta_\perp]\!]
		\end{array}\right].
	\end{align*}
	Let us consider the numerical flux,
	$$\tilde{\F}_x=\left[ \tilde{F}_{x}^{(1)},\tilde{F}_{x}^{(2)}, \tilde{F}_{x}^{(3)}, \tilde{F}_{x}^{(4)}, \tilde{F}_{x}^{(5)}, \tilde{F}_{x}^{(6)}, \tilde{F}_{x}^{(7)},\tilde{F}_{x}^{(8)},\tilde{F}_{x}^{(9)} \right]^{\top}.
	$$
	Then $\jump{\evar}\cdot \tilde{\F}_x$ can be written as,
	\begin{align}
		[\![ \evar]\!] \cdot \tilde{\F}_{x}=& \frac{2\tilde{F}_{x}^{(1)}}{{\rho}^{\ln}} [\![ \rho]\!] +\left(-2 \bar{u}_{x} \bar{\beta}_\perp \tilde{F}_{x}^{(1)}+2 \bar{\beta}_\perp \tilde{F}_{x}^{(2)}\right) [\![ u_{x}]\!]\nonumber \\
		&+ \left(-2 \bar{u}_{y} \bar{\beta}_\perp \tilde{F}_{x}^{(1)}+2 \bar{\beta}_\perp \tilde{F}_{x}^{(3)}\right) [\![ u_{y}]\!] \nonumber\\
		\label{eq:vdotf}
		&+\left(-2 \bar{u}_{z} \bar{\beta}_\perp \tilde{F}_{x}^{(1)}+2 \bar{\beta}_\perp \tilde{F}_{x}^{(4)}\right) [\![ u_{z}]\!] +\left(\frac{\tilde{F}_{x}^{(1)}}{{\beta}^{\ln}_\parallel}-\tilde{F}_{x}^{(5)}\right)[\![ \beta_\parallel]\!]\nonumber \\
		&+\Bigg[\left(\frac{2}{ {\beta}^{\ln}_\perp}-\overline{\left|u\right|^{2}}\right) \tilde{F}_{x}^{(1)}+2 \bar{u}_{x} \tilde{F}_{x}^{(2)}+2 \bar{u}_{y} \tilde{F}_{x}^{(3)}+2 \bar{u}_{z} \tilde{F}_{x}^{(4)}+\tilde{F}_{x}^{(5)}-2 \tilde{F}_{x}^{(6)}\nonumber\\
		&+2 \bar{B}_{x} \tilde{F}_{x}^{(7)}+2 \bar{B}_{y} \tilde{F}_{x}^{(8)}+2 \bar{B}_{z} \tilde{F}_{x}^{(9)}\Bigg] [\![ \beta_\perp]\!]\nonumber \\
		&+2 \bar{\beta}_\perp \tilde{F}_{x}^{(7)} [\![ B_x]\!]+2 \bar{\beta}_\perp \tilde{F}_{x}^{(8)} [\![ B_y]\!]+2 \bar{\beta}_\perp \tilde{F}_{x}^{(9)} [\![ B_z]\!] 
	\end{align}
	Similarly, we simplify the $[\![ \mathcal{F}_{x}]\!]$ and $[\![ \phi]\!]$, to get,
	$$
	\begin{aligned}
		[\![ \mathcal{F}_{x}]\!]=& [\![\left(2\rho u_{x}\right)]\!] + [\![\left(\beta_\perp u_{x}|\boldsymbol{B}|^{2}\right)]\!] \\
		=& 2\bar{u}_{x} [\![ \rho]\!] + \left(2\bar{\rho}+\bar{\beta}_\perp \overline{|\B|^{2}}\right) [\![ u_x]\!]+\bar{u}_{x} \overline{\left|\boldsymbol{B}\right|^{2}} [\![\beta_\perp]\!] +2 \overline{\beta_\perp u_{x}} \bar{B}_{x} [\![ B_{x}]\!] \\
		&+2 \overline{\beta_\perp u_{x}} \bar{B}_{y} [\![ B_{y}]\!] + 2 \overline{\beta_\perp u_{x}} \bar{B}_{z} [\![ B_{z}]\!],
	\end{aligned}
	$$
	and
	$$
	\begin{aligned}
		[\![ \phi]\!]=& 2 [\![(\beta_\perp(\boldsymbol{u} \cdot \boldsymbol{B})) ]\!] \\
		=& 2 \bar{\beta}_\perp \bar{B}_{x} [\![ u_x]\!] +2 \bar{\beta}_\perp \bar{B}_{y} [\![ u_y]\!] + 2 \bar{\beta}_\perp \bar{B}_{z} [\![ u_z]\!] + 2\left(\bar{u}_{x} \bar{B}_{x}+\bar{u}_{y} \bar{B}_{y}+\bar{u}_{z} \bar{B}_{z}\right) [\![ \beta_\perp]\!]\\
		&+2 \overline{\beta u_{x}} [\![ B_{x}]\!] + 2 \overline{\beta u_{y}} [\![ B_{y}]\!] +2 \overline{\beta u_{z}} [\![ B_{z}]\!].
	\end{aligned}
	$$
	Combining, we get,
	
	\begin{align}
		[\![ \mathcal{F}_{x}]\!] - \bar{B}_{x} [\![ \phi]\!] &= 2\bar{u}_{x} [\![ \rho]\!]
		+\left(2\bar{\rho} + \bar{\beta}_\perp \overline{|\boldsymbol{B}|^{2}}-2 \bar{\beta}_\perp \bar{B}_{x} \bar{B}_{x}\right) [\![ u_x]\!]  \nonumber\\
		&-2 \bar{\beta}_\perp \bar{B}_{x} \bar{B}_{y} [\![ u_y]\!] -2 \bar{\beta}_\perp \bar{B}_{x} \bar{B}_{z} [\![ u_z]\!] \nonumber\\
		&+\left[\bar{u}_{x} \overline{|\B|^{2}}-2\left(\bar{u}_{x} \bar{B}_{x}+\bar{u}_{y} \bar{B}_{y}+\bar{u}_{z} \bar{B}_{z}\right) \bar{B}_{x}\right] [\![\beta_\perp]\!] \nonumber\\
		\label{eq:vdotf_rhs}
		&+2\left(\overline{\beta_\perp u_{x}} \bar{B}_{y}-\overline{\beta_\perp u_{y}} \bar{B}_{x}\right) [\![ B_{y}]\!] \nonumber\\
		&+2\left(\overline{\beta_\perp u_{x}} \bar{B}_{z}-\overline{\beta_\perp u_{z}} \bar{B}_{z}\right) [\![ B_{z}]\!]
	\end{align}
	
	Equating the coefficients of $[\![ \rho]\!], [\![ u_x]\!], [\![ u_y]\!], [\![ u_z]\!], [\![ \beta_\parallel]\!], [\![\beta_\perp]\!], [\![ B_{x}]\!], [\![ B_{y}]\!]$ and $[\![ B_{z}]\!]$ in \eqref{eq:vdotf} and \eqref{eq:vdotf_rhs}, we get,
	$$
	\begin{aligned}
		\tilde{F}_{x}^{(1)}=& {\rho}^{\ln} \bar{u}_{x},~\tilde{F}_{x}^{(2)}= \frac{\bar{\rho}}{ \bar{\beta}_\perp}+\bar{u}_{x} \tilde{F}_{x}^{(1)}+\frac{1}{2} \overline{\left|\boldsymbol{B}\right|^{2}}-\bar{B}_{x} \bar{B}_{x} \\
		\tilde{F}_{x}^{(3)}=& \bar{u}_{y} \tilde{F}_{x}^{(1)}-\bar{B}_{x} \bar{B}_{y},~\tilde{F}_{x}^{(4)}= \bar{u}_{z} \tilde{F}_{x}^{(1)}-\bar{B}_{x} \bar{B}_{z},~\tilde{F}_{x}^{(5)}= \frac{\tilde{F}_{x}^{(1)}}{{\beta}^{\ln}_\parallel},\\
		\tilde{F}_{x}^{(7)}=& 0,~\tilde{F}_{x}^{(8)}= \frac{1}{\bar{\beta}_\perp}\left(\overline{\beta_\perp u_{x}} \bar{B}_{y}-\overline{\beta_\perp u_{y}} \bar{B}_{x}\right),~\tilde{F}_{x}^{(9)}= \frac{1}{\bar{\beta}_\perp}\left(\overline{\beta_\perp u_{x}} \bar{B}_{z}-\overline{\beta u_{z}}  \bar{B_{x}}\right),\\
		\tilde{F}_{x}^{(6)}=& \frac{1}{2}\left[\frac{2}{ {\beta}^{\ln}_\perp}-\overline{|u|^{2}}\right] \tilde{F}_{x}^{(1)}+\bar{u}_{x} \tilde{F}_{x}^{(2)}+\bar{u}_{y} \tilde{F}_{x}^{(3)}+\bar{u}_{z} \tilde{F}_{x}^{(4)} +\frac{\tilde{F}_{x}^{(5)}}{2}\\
		&+\bar{B}_{x} \tilde{F}_{x}^{(7)}+\bar{B}_{y} \tilde{F}_{x}^{(8)}+\bar{B}_{z} \tilde{F}_{x}^{(9)}-\frac{1}{2} \bar{u}_{x} \overline{\left|\boldsymbol{B}\right|^{2}}+\left(\bar{u}_{x} \bar{B}_{x}+\bar{u}_{y} \bar{B}_{y}+\bar{u}_{z} \bar{B}_{z}\right) \bar{B}_{x},
	\end{aligned}
	$$
	Similarly, we obtain the  entropy conservative numerical flux in $y$-direction,
	$\tilde{\F}_{y}=\left[\tilde{F}_{y}^{(1)}, \tilde{F}_{y}^{(2)}, \tilde{F}_{y}^{(3)}, \tilde{F}_{y}^{(4)},\right.$ $ \left.\tilde{F}_{y}^{(5)}, \tilde{F}_{y}^{(6)}, \tilde{F}_{y}^{(7)}, \tilde{F}_{y}^{(8)}, \tilde{F}_{y}^{(9)}\right]^\top$, where
	$$
	\begin{aligned}
		\tilde{F}_{y}^{(1)}=& {\rho}^{\ln} \bar{u}_{y},~\tilde{F}_{y}^{(2)}= \bar{u}_{x} \tilde{F}_{y}^{(1)}-\bar{B}_{x} \bar{B}_{y}, \\
		\tilde{F}_{y}^{(3)}=& \frac{\bar{\rho}}{ \bar{\beta}_\perp}+\bar{u}_{y} \tilde{F}_{y}^{(1)}+\frac{1}{2} \overline{\left|\boldsymbol{B}\right|^{2}}-\bar{B}_{y} \bar{B}_{y},~\tilde{F}_{y}^{(4)}= \bar{u}_{z} \tilde{F}_{y}^{(1)}-\bar{B}_{y} \bar{B}_{z},~\tilde{F}_{y}^{(5)}= \frac{\tilde{F}_{y}^{(1)}}{{\beta}^{\ln}_\parallel}, \\
		\tilde{F}_{y}^{(7)}=& \frac{1}{\bar{\beta}_\perp}\left(\overline{\beta_\perp u_{y}} \bar{B}_{x}-\overline{\beta_\perp u_{x}} \bar{B}_{y}\right),~\tilde{F}_{y}^{(8)}= 0,~\tilde{F}_{y}^{(9)}= \frac{1}{\bar{\beta}_\perp}\left(\overline{\beta_\perp u_{y}} \bar{B}_{z}-\overline{\beta_\perp u_{z}}  \bar{B_{y}}\right),\\
		\tilde{F}_{y}^{(6)}=& \frac{1}{2}\left[\frac{2}{ {\beta}^{\ln}_\perp}-\overline{|u|^{2}}\right] \tilde{F}_{y}^{(1)}+\bar{u}_{x} \tilde{F}_{y}^{(2)}+\bar{u}_{y} \tilde{F}_{y}^{(3)}+\bar{u}_{z} \tilde{F}_{y}^{(4)} +\frac{\tilde{F}_{y}^{(5)}}{2}\\
		&+\bar{B}_{x} \tilde{F}_{y}^{(7)}+\bar{B}_{y} \tilde{F}_{y}^{(8)}+\bar{B}_{z} \tilde{F}_{y}^{(9)}-\frac{1}{2} \bar{u}_{y} \overline{\left|\boldsymbol{B}\right|^{2}}+\left(\bar{u}_{x} \bar{B}_{x}+\bar{u}_{y} \bar{B}_{y}+\bar{u}_{z} \bar{B}_{z}\right) \bar{B}_{y},
	\end{aligned}
	$$
	It is easy to check that the above numerical fluxes are consistent. So, the numerical scheme \eqref{eq:semi-discrete_fd} is now well defined with $\tilde{\F}_{x,\iph,j} =\tilde{\F}_x(\con_{i,j},\con_{i+1,j})$ and $\tilde{\F}_{y,i,\jph} =\tilde{\F}_y(\con_{i,j},\con_{i,j+1}).$
	\subsubsection{Higher order entropy conservative schemes}
	In \cite{leFloch2002}, authors have constructed $2p$-th ($p$ is any positive integer) order entropy conservative numerical fluxes by combining second-order numerical flux. We will use the same formulations and construct a fourth-order numerical flux. Using the approach we present here, we can further extend to design $2p$-th order entropy conservative schemes. 
	
	Following \cite{leFloch2002}, let us define the $4$-th order numerical fluxes,
	\begin{align}
		\tilde{\F}_{x,i+\frac{1}{2},j}^4&=\frac{4}{3}\tilde{\F}_{x}(\con_{i,j},\con_{i+1,j})-\frac{1}{6} \bigg( \tilde{\F}_{x} (\con_{i-1,j},\con_{i+1,j})+
		\tilde{\F}_{x}(\con_{i,j},\con_{i+2,j}) \bigg).\label{eq:4thorder_numflux_x}
	\end{align}
	and 
	\begin{align}
		\tilde{\F}_{y,i,j+\frac{1}{2}}^4&=\frac{4}{3}\tilde{\F}_{y}(\con_{i,j},\con_{i,j+1})-\frac{1}{6} \bigg( \tilde{\F}_{y} (\con_{i,j-1},\con_{i,j+1})+
		\tilde{\F}_{y}(\con_{i,j},\con_{i,j+2}) \bigg).\label{eq:4thorder_numflux_y}
	\end{align}
	Using these numerical fluxes, let us consider the following fourth-order scheme:
	\begin{align}\label{eq:semi-discrete_4th_order}
		\frac{d}{d t}\con_{i,j}&+\frac{\tilde{\F}^4_{x, i+\frac{1}{2}, j}-\tilde{\F}^4_{x, i-\frac{1}{2}, j}}{\Delta x}+\frac{\tilde{\F}^4_{y, i, j+\frac{1}{2}}-\tilde{\F}^4_{y, i, j-\frac{1}{2}}}{\Delta y}\nonumber\\
		&+\phi'(\evar_{i, j})^\top\bigg(\left(\frac{\p B_x}{\p x}\right)_{i,j} + \left(\frac{\p B_y}{\p y}\right)_{i,j}\bigg)\nonumber \\
		&+\bc_x(\con_{i,j})\left(\frac{\p \con}{\p x}\right)_{i,j} +\bc_y(\con_{i,j}) \left(\frac{\p \con}{\p y}\right)_{i,j}=0.
	\end{align}
	where, the nonconservative terms $\left(\frac{\p B_x}{\p x}\right)_{i,j},\left(\frac{\p B_y}{\p y}\right)_{i,j},\left(\frac{\p \con}{\p x}\right)_{i,j}$ and $\left(\frac{\p \con}{\p y}\right)_{i,j}$ are discretized using fourth order central difference formula. Then we have the following results:
	
	\begin{thm}\label{thm:4.2}
		The numerical scheme \eqref{eq:semi-discrete_4th_order} is entropy conservative i.e. it satisfies,
		\begin{align}\label{eq:entropy_conservative_hi}
			\frac{d}{d t} \ent\left(\con_{i,j}\right)+\frac{\tilde{\entf}^4_{x,i+\frac{1}{2}, j}-\tilde{\entf}^4_{x,i-\frac{1}{2}, j}}{\Delta x} +\frac{\tilde{\entf}^4_{y,i,j+\frac{1}{2}, }-\tilde{\entf}^4_{y,i,j-\frac{1}{2}}}{\Delta y} =0
		\end{align}
		where the consistent numerical entropy fluxes are given by
		\begin{align*}
			&\tilde{\entf}^4_{x,i+\frac{1}{2}, j}=\frac{4}{3}\tilde{\entf}_{x}(\con_{i,j},\con_{i+1,j})-\frac{1}{6} \bigg( \tilde{\entf}_{x} (\con_{i-1,j},\con_{i+1,j})+
			\tilde{\entf}_{x}(\con_{i,j},\con_{i+2,j}) \bigg),\\
			&\tilde{\entf}^4_{y,i,j+\frac{1}{2}}=\frac{4}{3}\tilde{\entf}_{y}(\con_{i,j},\con_{i,j+1})\nonumber-\frac{1}{6} \bigg( \tilde{\entf}_{y} (\con_{i,j-1},\con_{i,j+1})+
			\tilde{\entf}_{y}(\con_{i,j},\con_{i,j+2}) \bigg).
		\end{align*}
	\end{thm}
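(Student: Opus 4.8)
The plan is to reduce the claim to a single telescoping identity for the fourth-order numerical entropy flux and then to contract the scheme with the entropy variable, reusing the structure of the proof of Theorem~\ref{thm:4.1}; the combination \eqref{eq:4thorder_numflux_x}--\eqref{eq:4thorder_numflux_y} and its accuracy are exactly those of \cite{leFloch2002}, so the only genuinely new point is to keep track of the Godunov term $\phi'(\evar)^\top(\nabla\cdot\B)$ and the non-conservative terms.

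First I would record a convenient one-sided form of the two-point relations. For arbitrary states $(\con_a,\con_b)$, write $\jump{a}=a_b-a_a$, $\bar a=\tfrac12(a_a+a_b)$. Combining the entropy-conservation condition \eqref{eq:consrvative_flux_x}, namely $\jump{\evar}\cdot\tilde{\F}_x(\con_a,\con_b)=\jump{\mathcal F_x}-\bar B_x\jump{\phi}$, with the definition \eqref{eq:consrvative_ent_flux_x} of $\tilde{\entf}_x$ and the splitting $\bar\evar=\evar_a+\tfrac12\jump{\evar}$, a one-line rearrangement gives the two ``one-sided'' forms
\begin{equation*}
\tilde{\entf}_x(\con_a,\con_b)=\evar_a\cdot\tilde{\F}_x(\con_a,\con_b)+\phi_a\,\bar B_x-\mathcal F_{x,a}=\evar_b\cdot\tilde{\F}_x(\con_a,\con_b)+\phi_b\,\bar B_x-\mathcal F_{x,b},
\end{equation*}
which are the discrete analogues of $\entf_x=\evar\cdot\f_x+\phi B_x-\mathcal F_x$; the $y$-direction is identical.

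Next I would telescope. In $\tilde{\entf}^4_{x,\iph,j}-\tilde{\entf}^4_{x,\imh,j}$ the ``wide'' pair $(\con_{i-1,j},\con_{i+1,j})$ enters both interface values with weight $-\tfrac16$ and drops out, leaving the $\tfrac43,-\tfrac16$ combination of $\tilde{\entf}_x(\con_{i,j},\con_{i\pm1,j})$ and $\tilde{\entf}_x(\con_{i,j},\con_{i\pm2,j})$. Applying to each of these four two-point fluxes the one-sided identity above with the endpoint chosen equal to $i$, the $\mathcal F_{x,i,j}$ pieces cancel inside each difference, and the remainder regroups as $\evar_{i,j}$ dotted with the same $\tfrac43,-\tfrac16$ combination of flux differences --- which is exactly $\evar_{i,j}\cdot(\tilde{\F}^4_{x,\iph,j}-\tilde{\F}^4_{x,\imh,j})$, the wide pair cancelling again --- plus $\phi_{i,j}$ times $\tfrac23(B_{x,i+1,j}-B_{x,i-1,j})-\tfrac{1}{12}(B_{x,i+2,j}-B_{x,i-2,j})$, which is $\Dx$ times the standard fourth-order central difference $(\p_x B_x)_{i,j}$ used in \eqref{eq:semi-discrete_4th_order}. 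This yields
\begin{equation*}
\tilde{\entf}^4_{x,\iph,j}-\tilde{\entf}^4_{x,\imh,j}=\evar_{i,j}\cdot\big(\tilde{\F}^4_{x,\iph,j}-\tilde{\F}^4_{x,\imh,j}\big)+\Dx\,\phi_{i,j}\,(\p_x B_x)_{i,j},
\end{equation*}
and, by the identical computation, $\tilde{\entf}^4_{y,i,\jph}-\tilde{\entf}^4_{y,i,\jmh}=\evar_{i,j}\cdot(\tilde{\F}^4_{y,i,\jph}-\tilde{\F}^4_{y,i,\jmh})+\Dy\,\phi_{i,j}\,(\p_y B_y)_{i,j}$.

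Finally I would take the scalar product of the scheme \eqref{eq:semi-discrete_4th_order} with $\evar_{i,j}$: since $\evar=\ent'(\con)^\top$ the time term becomes $\tfrac{d}{dt}\ent(\con_{i,j})$; the terms $\evar_{i,j}^\top\bc_x(\con_{i,j})(\p_x\con)_{i,j}$ and $\evar_{i,j}^\top\bc_y(\con_{i,j})(\p_y\con)_{i,j}$ vanish identically by \eqref{eq:ent_noncons_prod_def}, whatever central-difference formula is used; and $\evar_{i,j}\cdot\phi'(\evar_{i,j})^\top=\phi_{i,j}$ by homogeneity \eqref{eq:phi}, so the Godunov term contributes $\phi_{i,j}\big((\p_x B_x)_{i,j}+(\p_y B_y)_{i,j}\big)$. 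Dividing the flux differences by $\Dx$ and $\Dy$ and substituting the two identities above, the $\phi_{i,j}(\p_x B_x)_{i,j}$ and $\phi_{i,j}(\p_y B_y)_{i,j}$ pieces cancel the Godunov contribution exactly, and what remains is precisely \eqref{eq:entropy_conservative_hi}; consistency of $\tilde{\entf}^4_x,\tilde{\entf}^4_y$ with \eqref{eq:ent_x_flux_mod}--\eqref{eq:ent_y_flux_mod} follows from $\tfrac43-2\cdot\tfrac16=1$ and the consistency of $\tilde{\entf}_x,\tilde{\entf}_y$. I expect the only delicate part to be the bookkeeping in the telescoping step --- choosing, for each of the four surviving two-point fluxes, the right endpoint in the one-sided identity, and checking that the wide difference $\tfrac12(B_{x,i+2,j}-B_{x,i-2,j})$ assembles with the narrow one into exactly the fourth-order central stencil; beyond that the argument is a direct transcription of the proof of Theorem~\ref{thm:4.1} together with the telescoping of \cite{leFloch2002}.
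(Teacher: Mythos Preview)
Your proposal is correct and follows essentially the same route as the paper. The only cosmetic difference is that you first isolate the one-sided identity $\tilde{\entf}_x(\con_a,\con_b)=\evar_a\cdot\tilde{\F}_x(\con_a,\con_b)+\phi_a\bar B_x-\mathcal F_{x,a}$ and telescope with it directly, whereas the paper reuses the two-sided relation \eqref{eq:entf_diff} from the proof of Theorem~\ref{thm:4.1} applied separately to the $\tfrac43$ and $-\tfrac16$ sub-fluxes (after writing the fourth-order central $B_x$-derivative as a difference of interface averages $\bar B^4_{x,i\pm\frac12,j}$); in both versions the wide pair $(\con_{i-1,j},\con_{i+1,j})$ cancels, the $\phi$-term reproduces the fourth-order central stencil, and $\evar^\top\bc_x=0$ together with the homogeneity $\evar\cdot\phi'(\evar)=\phi$ handles the non-conservative and Godunov contributions.
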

	
	\begin{proof}
		We will first consider the $x$-directional case. Note that fourth-order central discretization of $\frac{\p B_x}{\p x}$ can be written as,
		$$
		\left(\frac{\p B_x}{\p x}\right)_{i,j} = \frac{\bar{B}^4_{x,\iph,j} -\bar{B}^4_{x,i,\jph}  }{\Dx}
		$$
		where
		\begin{align*}
			\bar{B}_{x,i+\frac{1}{2},j}^4&=\frac{4}{3}\bar{B}_{x,i+\frac{1}{2},j} - \frac{1}{6} \left( \frac{B_{x,i-1,j} +B_{x,i+1,j} }{2} + \frac{B_{x,i,j} +B_{x,i+2,j} }{2}  \right).
		\end{align*}
		Now multiply the right hand side of \eqref{eq:semi-discrete_4th_order} with $\evar_{i,j}$, we get,
		\begin{align*}
			\evar_{i,j}\left(\tilde{\F}_{x,i+\frac{1}{2},j}^4-\tilde{\F}_{x,i-\frac{1}{2},j}^4\right)&+ \evar_{i,j}\phi'(\evar_{i,j})	\left(\bar{B}_{x,i+\frac{1}{2},j}^4-\bar{B}_{x,i-\frac{1}{2},j}^4\right)+\\
			\evar_{i,j}\bc_x(\con_{i,j})\left(\frac{\p \con}{\p x}\right)_{i,j}
			&=		\evar_{i,j}\left(\tilde{\F}_{x,i+\frac{1}{2},j}^4-\tilde{\F}_{x,i-\frac{1}{2},j}^4\right)\\
			& + \phi(\evar_{i,j}) \left(\bar{B}_{x,i+\frac{1}{2},j}^4-\bar{B}_{x,i-\frac{1}{2},j}^4\right)\\
			&=\evar_{i,j}\frac{4}{3}\left(\tilde{\F}_{x}(\con_{i,j},\con_{i+1,j} )- \tilde{\F}_{x}(\con_{i-1,j},\con_{i,j})\right) \\
			&+\phi(\evar_{i,j}) \frac{4}{3}\left(\bar{B}_{x,\iph,j}- \bar{B}_{x,\imh,j}\right) \\
			&- \evar_{i,j}\frac{1}{6}\left( \tilde{\F}_{x}(\con_{i,j},\con_{i+2,j}) - \tilde{\F}_{x}(\con_{i-2,j},\con_{i,j})\right) \\
			&- \phi(\evar_{i,j})\frac{1}{6}\left(  \frac{B_{x,i,j} +B_{x,i+2,j} }{2}  -  \frac{B_{x,i-2,j} +B_{x,i,j} }{2} \right) \\
			&=\frac{4}{3}\left(\tilde{\entf}_{x}(\con_{i,j},\con_{i+1,j})- \tilde{\entf}_{x}(\con_{i-1,j},\con_{i,j})\right) ~\text{ using \eqref{eq:entf_diff} }\\
			&-\frac{1}{6}\left(\tilde{\entf}_x(\con_{i,j},\con_{i+2,j}) -\tilde{\entf}_{x}(\con_{i-2,j},\con_{i,j})  \right) \\
			&=\left(\tilde{\entf}^4_{x,\iph,j}- \tilde{\entf}^4_{x,\imh,j}\right). 
		\end{align*}
		Similarly, for $y$-direction,
		\begin{align*}
			\left(\tilde{\entf}^4_{y,i,\jph}- \tilde{\entf}^4_{y,i,\jmh}\right) &=\evar_{i,j}\left(\tilde{\F}_{y,i,j+\frac{1}{2}}^4-\tilde{\F}_{y,i,j-\frac{1}{2}}^4\right)\\
			&+\evar_{i,j}\phi'(\evar_{i,j})	\left(\frac{\p B_y}{\p y}\right)_{i,j}+\evar_{i,j}\bc_y(\con_{i,j})\left(\frac{\p \con}{\p y}\right)_{i,j}.
		\end{align*}
		Combining $x$ and $y$-direction entropy flux difference, we now get \eqref{eq:entropy_conservative_hi}.
	\end{proof}

	\subsection{Entropy stable schemes}
	\label{subsec:ent_stable_schemes}
	Entropy conservative schemes produce oscillations when the solutions contain shock. To ensure a stable scheme, we need to have entropy stability of the numerical scheme. We will now modify the numerical flux to ensure the entropy stability of the scheme. Following \cite{tadmor2003entropy}, we introduce the modified numerical flux as follows:
	
	\begin{equation}
		\begin{aligned}
			{\hat{\F}}_{x,i+\frac{1}{2},j} =\tilde{\F}_{x,i+\frac{1}{2},j} - \frac{1}{2} \textbf{D}_{x,i+\frac{1}{2},j}[\![ \evar]\!]_{i+\frac{1}{2},j},~
			{\hat{\F}}_{y,i,j+\frac{1}{2}} = \tilde{\F}_{y,i,j+\frac{1}{2}} - \frac{1}{2} \textbf{D}_{y,i,j+\frac{1}{2}}[\![ \evar]\!]_{i,j+\frac{1}{2}},
			\label{es_numflux}
		\end{aligned}
	\end{equation}
	where $\textbf{D}_{x,i+\frac{1}{2},j}$ and $\textbf{D}_{y,i,j+\frac{1}{2}}$ are symmetric positive definite matrices. Combining Theorem \ref{thm:4.1}, with Theorem 3.3 in \cite{chandrashekar2016entropy}, we have the following result:
	\begin{thm} The numerical scheme \eqref{eq:semi-discrete_fd} with the modified numerical fluxes \eqref{es_numflux} is entropy stable, i.e., the solutions satisfies,
		\begin{equation*}
			\frac{d}{dt}  \ent(\con_{i,j})  +\frac{1}{\Delta x} \left( \hat{\entf}_{x,i+\frac{1}{2},j} - \hat{\entf}_{x,i-\frac{1}{2},j}\right)+\frac{1}{\Delta y}\left( \hat{\entf}_{y,i,j+\frac{1}{2}} - \hat{\entf}_{y,i,j-\frac{1}{2}}\right) \le 0,
		\end{equation*}
		with consistent numerical entropy flux functions,
		\begin{equation*}
			\begin{aligned}
				\hat{\entf}_{x,i+\frac{1}{2},j}=  \tilde{\entf}_{x,i+\frac{1}{2},j} - \frac{1}{2}\bar{\evar}_{i+\frac{1}{2},j}^{\top}  \textbf{D}_{x,i+\frac{1}{2},j}[\![ \evar]\!]_{i+\frac{1}{2},j}
		\end{aligned} \end{equation*}
		and	
		\begin{equation*}
			\begin{aligned}
				\qquad \hat{\entf}_{y,i,j+\frac{1}{2}}=    \tilde{\entf}_{y,i,j+\frac{1}{2}} -  \frac{1}{2}\bar{\evar}_{i,j+\frac{1}{2}}^{\top}  \textbf{D}_{y,i,j+\frac{1}{2}}[\![ \evar]\!]_{i,j+\frac{1}{2}}.\end{aligned} \end{equation*}
	\end{thm}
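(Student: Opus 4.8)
\emph{Proof proposal.} The plan is to run the classical Tadmor argument: dot the scheme with the entropy variable, use Theorem~\ref{thm:4.1} to collapse the entropy-conservative flux, the Godunov source term and the non-conservative products into the telescoping flux $\tilde{\entf}$, and then show that the added diffusion converts $\tilde{\entf}$ into $\hat{\entf}$ plus a sign-definite remainder.

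First I would take the dot product of the semi-discrete scheme \eqref{eq:semi-discrete_fd} (with $\hat{\F}$ of \eqref{es_numflux} in place of the numerical flux) with $\evar_{i,j}=\ent'(\con_{i,j})^\top$. By the chain rule the time term gives $\frac{d}{dt}\ent(\con_{i,j})$; the two non-conservative products drop out because $\evar^\top\bc_x=\evar^\top\bc_y=0$ by \eqref{eq:ent_noncons_prod_def}; and, writing $\hat{\F}_x=\tilde{\F}_x-\frac12\textbf{D}_x\jump{\evar}$, the flux difference splits as $\evar_{i,j}\cdot(\tilde{\F}_{x,\iph,j}-\tilde{\F}_{x,\imh,j})$ plus the diffusion piece $-\frac12\evar_{i,j}\cdot\bigl(\textbf{D}_{x,\iph,j}\jump{\evar}_{\iph,j}-\textbf{D}_{x,\imh,j}\jump{\evar}_{\imh,j}\bigr)$, and similarly in $y$. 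The entropy-conservative part, together with the homogeneous Godunov term $\phi(\evar_{i,j})\,(\partial_x B_x)_{i,j}$, was already shown in the proof of Theorem~\ref{thm:4.1} (identity \eqref{eq:entf_diff}) to equal $\tfrac{1}{\Dx}\bigl(\tilde{\entf}_{x,\iph,j}-\tilde{\entf}_{x,\imh,j}\bigr)$, and likewise $\tfrac{1}{\Dy}\bigl(\tilde{\entf}_{y,i,\jph}-\tilde{\entf}_{y,i,\jmh}\bigr)$.

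Next I would treat the diffusion piece with the elementary identities $\evar_{i,j}=\bar{\evar}_{\iph,j}-\tfrac12\jump{\evar}_{\iph,j}=\bar{\evar}_{\imh,j}+\tfrac12\jump{\evar}_{\imh,j}$. Substituting these into the two diffusion terms produces a telescoping pair $-\tfrac12\bar{\evar}_{\iph,j}^\top\textbf{D}_{x,\iph,j}\jump{\evar}_{\iph,j}+\tfrac12\bar{\evar}_{\imh,j}^\top\textbf{D}_{x,\imh,j}\jump{\evar}_{\imh,j}$, which by the stated definition of $\hat{\entf}_x$ is exactly $\bigl(\hat{\entf}_{x,\iph,j}-\tilde{\entf}_{x,\iph,j}\bigr)-\bigl(\hat{\entf}_{x,\imh,j}-\tilde{\entf}_{x,\imh,j}\bigr)$, plus a leftover $+\tfrac14\jump{\evar}_{\iph,j}^\top\textbf{D}_{x,\iph,j}\jump{\evar}_{\iph,j}+\tfrac14\jump{\evar}_{\imh,j}^\top\textbf{D}_{x,\imh,j}\jump{\evar}_{\imh,j}$. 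Adding the $x$- and $y$-contributions, the $\tilde{\entf}$ terms cancel and one is left with
\begin{align*}
\frac{d}{dt}\ent(\con_{i,j})&+\frac{1}{\Dx}\bigl(\hat{\entf}_{x,\iph,j}-\hat{\entf}_{x,\imh,j}\bigr)+\frac{1}{\Dy}\bigl(\hat{\entf}_{y,i,\jph}-\hat{\entf}_{y,i,\jmh}\bigr)\\
&=-\frac{1}{4\Dx}\Bigl(\jump{\evar}_{\iph,j}^\top\textbf{D}_{x,\iph,j}\jump{\evar}_{\iph,j}+\jump{\evar}_{\imh,j}^\top\textbf{D}_{x,\imh,j}\jump{\evar}_{\imh,j}\Bigr)\\
&\quad-\frac{1}{4\Dy}\Bigl(\jump{\evar}_{i,\jph}^\top\textbf{D}_{y,i,\jph}\jump{\evar}_{i,\jph}+\jump{\evar}_{i,\jmh}^\top\textbf{D}_{y,i,\jmh}\jump{\evar}_{i,\jmh}\Bigr),
\end{align*}
and the right-hand side is $\le 0$ since each $\textbf{D}$ is symmetric positive definite. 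Consistency of $\hat{\entf}_x,\hat{\entf}_y$ with \eqref{eq:ent_x_flux_mod}--\eqref{eq:ent_y_flux_mod} follows from consistency of $\tilde{\entf}$ together with the vanishing of $\jump{\evar}$ at a constant state.

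I expect essentially no analytic obstacle here: the only point requiring care is that the entropy-conservative computation of Theorem~\ref{thm:4.1} (the cancellation of the non-conservative terms against the symmetrization source) must survive the replacement $\tilde{\F}\mapsto\hat{\F}$ — this is immediate because the diffusion operator modifies only the conservative numerical flux and leaves the Godunov term $\phi'(\evar)$ and the matrices $\bc_x,\bc_y$ untouched. The remaining work is bookkeeping: tracking the $\pm\tfrac12\jump{\evar}$ decomposition so that the telescoping entropy flux is correctly identified and the residual quadratic form appears with the right (negative) sign. The same argument applies verbatim to the fourth-order scheme \eqref{eq:semi-discrete_4th_order} with $\tilde{\F}^4$ in place of $\tilde{\F}$ and fourth-order central differences, using Theorem~\ref{thm:4.2} in place of Theorem~\ref{thm:4.1}.
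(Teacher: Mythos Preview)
Your proposal is correct and is precisely the standard Tadmor argument the paper has in mind: the paper does not spell out a proof here but simply states the result as a combination of Theorem~\ref{thm:4.1} with Theorem~3.3 of \cite{chandrashekar2016entropy}, which is exactly the $\evar_{i,j}=\bar{\evar}_{i\pm\frac12,j}\mp\tfrac12\jump{\evar}_{i\pm\frac12,j}$ decomposition you carry out. Your computation of the residual quadratic form and the consistency remark are both correct, so nothing further is needed.
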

	To define the diffusion part, we consider the symmetrized conservative part \eqref{eq:cgl_modified}, in quasi-linear form, i.e.
	\begin{equation}
		\frac{\p \con}{\p t} + \mathcal{A}_d \frac{\p \con}{\p x} =0 ,\qquad  \mathcal{A}_d = \frac{\p \f_d}{\p \con} + \phi'(\evar)B'_d (\con).
	\end{equation}
	with $d \in \{x,y\}$. As the above system is symmetrizable, we follow \cite{barth1999numerical,chandrashekar2016entropy} to derive entropy scaled eigenvectors in Appendix \ref{scaledrev}. We denote the matrix of entropy scaled right eigenvectors as $\tilde{R}_d$. Furthermore, let us take ${\Lambda_d},\, d \in \{x,y\},$ are $9\times 9$ diagonal matrices of the form
	$${\Lambda_d}=\text{diag}  \left\lbrace \left( \max_{1 \leq k \leq 9} |\lambda^k_d|\right) \mathbf{I}_{9 \times 9} \right\rbrace,  \ \ \, d \in \{x,y\}.$$
	where $\{\lambda^k_d: 1 \leq k \leq 9 \}$ is the set of eigenvalues of the jacobian $\mathcal{A}_d$. Then we define diffusion matrices, 
	\begin{equation}  \label{diffusiontype}
		\textbf{D}_{x,i+\frac{1}{2},j} = \tilde{R}_{x,i+\frac{1}{2},j} \Lambda_{x,i+\frac{1}{2},j} \tilde{R}_{x,i+\frac{1}{2},j}^{\top}~\text{and}~\textbf{D}_{y,i,j+\frac{1}{2}} = \tilde{R}_{y,i,j+\frac{1}{2}} \Lambda_{y,i,j+\frac{1}{2}} \tilde{R}_{y,i,j+\frac{1}{2}}^{ \top}.
	\end{equation}
	With this choice, the numerical scheme with modified fluxes is stable in entropy. However, the scheme contains the jump term $\jump{\evar}$ in both directions, which results in only first-order accuracy, even with second or higher-order entropy conservative numerical flux. One simple way to overcome this is to reconstruct the jumps $[\![\evar]\!]_{i+\frac{1}{2},j}$ and $[\![\evar]\!]_{i,j+\frac{1}{2}}$ using higher order reconstruction process. This indeed leads to higher-order schemes, but it is not possible to prove the entropy stability of the scheme. Instead, we follow \cite{fjordholm2012arbitrarily} to design higher-order entropy stable schemes.
	
	\subsubsection{Higher order entropy stable schemes}
	Following \cite{fjordholm2012arbitrarily}, we define {\em scaled entropy variables} 
	$$\W^{\pm}_{x,i,j}= {R}^{{\top}}_{x,i\pm\frac{1}{2},j}\evar_{i,j}.$$
	The key idea is to reconstruct $\W^{\pm}_{x,i,j}$ instead on $\evar^{\pm}_{x,i,j}$. Let $\hat{\W}^{\pm}_{x,i,j}$, denotes the {\em sign preserving} higher order reconstruction of $\W^{\pm}_{x,i,j}$ in $x$-direction. Then, we define reconstructed $\evar_{x,i,j}^{\pm}$ as,
	$$
	\hat{\evar}^{\pm}_{x,i,j} =  \left\lbrace \tilde{ R}^{{\top}}_{x,i\pm\frac{1}{2},j}\right\rbrace ^{(-1)}\hat{\W}_{x,i,j}^{\pm}.
	$$
	Hence, now the reconstructed $\jump{\evar}_{\iph,j}$ of $k$-th order is defined as,
	$$
	\jump{\hat{\evar}}^k_{x,i,j} = \hat{\evar}^-_{x,i+1,j}-\hat{\evar}^+_{x,i,j}.
	$$
	Using this, we now define higher-order entropy stable numerical flux in $x$-direction as,
	\begin{equation}
		{\hat{\F}}^{k}_{x,i+\frac{1}{2},j}\,=\,\tilde{\F}^{2p}_{x,i+\frac{1}{2},j}\,-\,\frac{1}{2}\,\textbf{D}_{x,i+\frac{1}{2},j}[\![ \bb{\hat{V}}]\!]_{x,i+\frac{1}{2},j}^k.
		\label{eq:entropy_stable_flux_x}
	\end{equation}
	We choose $p=2$ for first and second-order schemes and $p=4$ for third and fourth-order schemes. We can follow a similar process in $y$-direction. To ensure the entropy stability condition of the above scheme, the reconstruction of the scaled entropy variable needs to satisfy {\em sign preserving property} for each component, i.e., we need  
	\begin{equation}
		\label{eq:sign_preser}
		\textrm{sign} \left( {\hat{\W}}^{-}_{x,i+1,j,l} - {\hat{\W}}^{+}_{x,i,j,l} \right) = \textrm{sign} \left(  \W_{i+1,j,l} - \W_{i,j,l} \right) \quad \text{for}\quad 1\le l \le 9
	\end{equation}
	where $l$ denote the $l$-th component of vector $\W$. This can be ensured by using MinMod reconstruction for the second order scheme ($k=2$). For third ($k=3$) and fourth ($k=4$) order schemes, we need to use ENO-based reconstruction \cite{fjordholm2013eno} of the third and fourth order. Similarly, in $y$-direction, we define higher numerical flux as:
	\begin{equation}
		{\hat{\F}}^{k}_{y,i+\frac{1}{2},j}\,=\,\tilde{\F}^{2p}_{y,i+\frac{1}{2},j}\,-\,\frac{1}{2}\,\textbf{D}_{y,i+\frac{1}{2},j}[\![ \bb{\hat{V}}]\!]_{y,i+\frac{1}{2},j}^k.
		\label{eq:entropy_stable_flux_y}
	\end{equation}
	where reconstruction is performed in $y$-direction. 
	\subsection{Semi-discrete entropy stability}\label{subsec:semidisscheme}
	We will now prove the entropy stability of the proposed numerical schemes. Consider the following numerical discretization:
	
	\begin{align}
		\label{eq:semi-discrete_final_scheme}
		\frac{d}{d t}\con_{i,j}&+\frac{\hat{\F}^k_{x, i+\frac{1}{2}, j}-\hat{\F}^k_{x, i-\frac{1}{2}, j}}{\Delta x}+\frac{\hat{\F}^k_{y, i, j+\frac{1}{2}}-\hat{\F}^k_{y, i, j-\frac{1}{2}}}{\Delta y}\nonumber&\\&+\phi'(\evar_{i, j})^\top \bigg(\left(\frac{\p B_x}{\p x}\right)_{i,j} + \left(\frac{\p B_y}{\p y}\right)_{i,j}\bigg)\nonumber \\
		&+\bc_x(\con_{i,j})\left(\frac{\p \con}{\p x}\right)_{i,j} +\bc_y(\con_{i,j}) \left(\frac{\p \con}{\p y}\right)_{i,j}=0,
	\end{align}
	where we use the higher order entropy stable numerical flux \eqref{eq:entropy_stable_flux_x} and  \eqref{eq:entropy_stable_flux_y}. Furthermore, the derivatives
	$\left(\frac{\partial B_x}{\partial x}\right)_{i,j},~\left(\frac{\partial B_y}{\partial y}\right)_{i,j},~\left(\frac{\partial \con}{\partial x}\right)_{i,j}$ and $\left(\frac{\partial \con}{\partial y}\right)_{i,j}$ in the non-conservative term are discretized using second-order central difference for $k=1$ and $k=2$ and fourth order central difference for $k=3$ and $k=4$. Then, we have the following result:

	\begin{thm}
		The semi-discrete scheme \eqref{eq:semi-discrete_final_scheme} is $k$ order accurate and entropy stable, i.e. it satisfies,
		\begin{equation}
			\label{eq:semi-disc_ent_stab}
			\frac{d}{dt}  \ent(\con_{i,j})  +\frac{1}{\Delta x} \left( \hat{\entf}^k_{x,i+\frac{1}{2},j} - \hat{\entf}^k_{x,i-\frac{1}{2},j}\right)+\frac{1}{\Delta y}\left( \hat{\entf}^k_{y,i,j+\frac{1}{2}} - \hat{\entf}^k_{y,i,j-\frac{1}{2}}\right) \le 0,
		\end{equation}
		where $ \hat{\entf}^k_x$ and $ \hat{\entf}^k_y$ are given by,
		\begin{equation*}
			\begin{aligned}
				\hat{\entf}^k_{x,i+\frac{1}{2},j}=  \tilde{\entf}^{2p}_{x,i+\frac{1}{2},j} - \frac{1}{2}\bar{\evar}_{i+\frac{1}{2},j}^{\top}  \textbf{D}_{x,i+\frac{1}{2},j}[\![ \hat{\evar}]\!]^k_{x,i+\frac{1}{2},j}
		\end{aligned} \end{equation*}
		
		and \begin{equation*}
			\begin{aligned}
				\qquad \hat{\entf}^k_{y,i,j+\frac{1}{2}}=    \tilde{\entf}^{2p}_{y,i,j+\frac{1}{2}} -  \frac{1}{2}\bar{\evar}_{i,j+\frac{1}{2}}^{\top}  \textbf{D}_{y,i,j+\frac{1}{2}}[\![ \hat{\evar}]\!]^k_{y,i,j+\frac{1}{2}}.\end{aligned} \end{equation*}
	\end{thm}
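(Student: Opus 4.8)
The plan is to combine the entropy conservation established in Theorems \ref{thm:4.1} and \ref{thm:4.2} for the underlying $2p$-th order entropy conservative flux with the entropy dissipation contributed by the diffusion term in \eqref{eq:entropy_stable_flux_x}–\eqref{eq:entropy_stable_flux_y}, and then show that the nonconservative terms plus the Godunov source term do not disturb this balance. First I would take the dot product of the semi-discrete scheme \eqref{eq:semi-discrete_final_scheme} with the entropy variable $\evar_{i,j}$, so that the left-hand side becomes $\frac{d}{dt}\ent(\con_{i,j})$ by the definition $\evar=\ent'(\con)^\top$. Writing $\hat{\F}^k = \tilde{\F}^{2p} - \frac{1}{2}\textbf{D}[\![\hat{\evar}]\!]^k$, I split the flux difference into the entropy conservative part and the diffusion part.

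For the entropy conservative part, I would invoke exactly the computation in the proof of Theorem \ref{thm:4.2} (or Theorem \ref{thm:4.1} when $k\le 2$, $p=2$): the combination $\evar_{i,j}\cdot(\tilde{\F}^{2p}_{x,\iph,j}-\tilde{\F}^{2p}_{x,\imh,j})$, together with $\evar_{i,j}\cdot\phi'(\evar_{i,j})$ times the central difference of $B_x$ and $\evar_{i,j}^\top\bc_x(\con_{i,j})$ times the central difference of $\con$, telescopes into $\tilde{\entf}^{2p}_{x,\iph,j}-\tilde{\entf}^{2p}_{x,\imh,j}$, and analogously in $y$. Crucially this uses two facts already in the excerpt: the entropy-consistency identities \eqref{eq:consrvative_flux_x}–\eqref{eq:consrvative_flux_y}, the homogeneity \eqref{eq:phi} giving $\evar\cdot\phi'(\evar)=\phi(\evar)$, and the structural identity \eqref{eq:ent_noncons_prod_def}, i.e.\ $\evar^\top\bc_x(\con)=\evar^\top\bc_y(\con)=0$, proven in Appendix \eqref{A.4}, which annihilates the nonconservative contribution pointwise. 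The only subtlety is that the central-difference order ($2$ for $k\le 2$, $4$ for $k=3,4$) must match the order of the entropy conservative flux used, which is why the hypothesis ties $p$ to $k$; this is precisely the compatibility exploited in Theorem \ref{thm:4.2}.

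For the diffusion part, I would show $-\frac{1}{2}\evar_{i,j}\cdot\big(\textbf{D}_{x,\iph,j}[\![\hat{\evar}]\!]^k_{x,\iph,j} - \textbf{D}_{x,\imh,j}[\![\hat{\evar}]\!]^k_{x,\imh,j}\big)$ can be rewritten, after summation by parts at the flux level, as $-(\hat{\entf}^{\text{diff}}_{x,\iph,j}-\hat{\entf}^{\text{diff}}_{x,\imh,j}) - \frac{1}{2}[\![\bar{\evar}]\!]\cdots$; more precisely, following the standard Tadmor--Fjordholm argument, the per-interface entropy production is $-\frac{1}{2}[\![\evar]\!]_{\iph,j}^\top\textbf{D}_{x,\iph,j}[\![\hat{\evar}]\!]^k_{x,\iph,j}$, and the sign-preserving reconstruction \eqref{eq:sign_preser} together with $\textbf{D}_{x,\iph,j}=\tilde R_x\Lambda_x\tilde R_x^\top$ being symmetric positive semidefinite forces this to be $\le 0$. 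Concretely, in the scaled variables $\W=\tilde R_x^\top\evar$ the quadratic form diagonalizes into $-\frac{1}{2}\sum_l \Lambda_{x,l}\,[\![W_l]\!]\,[\![\hat W_l]\!]$, and \eqref{eq:sign_preser} guarantees each $[\![W_l]\!][\![\hat W_l]\!]\ge 0$ while $\Lambda_{x,l}\ge 0$. Adding the $x$ and $y$ contributions and identifying $\hat{\entf}^k_x,\hat{\entf}^k_y$ with the stated expressions gives \eqref{eq:semi-disc_ent_stab}.

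The main obstacle is the bookkeeping of the three simultaneous structures at an interface — the $2p$-th order telescoping of the conservative flux, the Godunov $\phi'$-source, and the nonconservative $\bc$-matrices — and checking that the reconstructed jump $[\![\hat\evar]\!]^k$ entering the diffusion does not interfere with the entropy-conservative telescoping (it does not, because diffusion is a separate additive term and the conservative part is evaluated at the unreconstructed states). Accuracy is the routine part: $\tilde{\F}^{2p}$ is $2p$-th order accurate by \cite{leFloch2002}, the central differences are consistent of the matching order, the sign-preserving reconstruction is $k$-th order accurate by \cite{fjordholm2013eno}, and the diffusion term is an $O(\Dx^{k})$ perturbation, so the scheme is $k$-th order accurate. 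I would state accuracy briefly and devote the bulk of the proof to the entropy inequality, essentially assembling Theorems \ref{thm:4.1}, \ref{thm:4.2}, the cited Theorem 3.3 of \cite{chandrashekar2016entropy}, and the sign-preserving property \eqref{eq:sign_preser}.
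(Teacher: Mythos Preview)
Your proposal is correct and follows essentially the same route as the paper's proof: dot with $\evar_{i,j}$, split $\hat{\F}^k=\tilde{\F}^{2p}-\tfrac12\textbf{D}[\![\hat{\evar}]\!]^k$, invoke Theorems~\ref{thm:4.1}/\ref{thm:4.2} (together with $\evar\cdot\phi'(\evar)=\phi(\evar)$ and $\evar^\top\bc_d=0$) to telescope the conservative part into $\tilde{\entf}^{2p}$, and then pass to the scaled variables $\W=\tilde R^\top\evar$ so that the interface dissipation becomes $\sum_l\Lambda_l[\![W_l]\!][\![\hat W_l]\!]\ge 0$ by the sign-preserving property~\eqref{eq:sign_preser}. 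The paper's proof records exactly this chain of equalities in a single display and cites \cite{fjordholm2012arbitrarily} for accuracy, so your more detailed outline is a faithful expansion of it.
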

	\begin{proof} Accuracy follows from \cite{fjordholm2012arbitrarily}. We also note that from Theorems \eqref{thm:4.1} and \eqref{thm:4.2}, we get,
		\begin{align*}
			\frac{d}{dt}  \ent(\con_{i,j})  &+\frac{1}{\Delta x} \left( \hat{\entf}^k_{x,i+\frac{1}{2},j} - \hat{\entf}^k_{x,i-\frac{1}{2},j}\right)+\frac{1}{\Delta y}\left( \hat{\entf}^k_{y,i,j+\frac{1}{2}} - \hat{\entf}^k_{y,i,j-\frac{1}{2}}\right)\\
			&=-\frac{1}{2 \Dx} \left[ \jump{\evar}_{\iph,j} \textbf{D}_{x,\iph,j}  \jump{\hat{\evar}}^k_{x,\iph,j} + \jump{\evar}_{\imh,j} \textbf{D}_{x,\imh,j}  \jump{\hat{\evar}}^k_{x,\imh,j} \right]\\
			&\quad\;-\frac{1}{2 \Dy} \left[ \jump{\evar}_{i,\jph} \textbf{D}_{y,i,\jph}  \jump{\hat{\evar}}^k_{y,i,\jph} + \jump{\evar}_{i,\jmh} \textbf{D}_{y,i,\jmh}  \jump{\hat{\evar}}^k_{y,i,\jmh} \right]\\
			&=-\frac{1}{2 \Dx} \left[ \jump{\W}_{\iph,j} {\Lambda}_{x,\iph,j}  \jump{\hat{\W}}^k_{x,\iph,j} + \jump{\W}_{\imh,j} {\Lambda}_{x,\imh,j}  \jump{\hat{\W}}^k_{x,\imh,j} \right]\\
			&\quad\;-\frac{1}{2 \Dy} \left[ \jump{\W}_{i,\jph} \Lambda_{y,i,\jph}  \jump{\hat{\W}}^k_{y,i,\jph} + \jump{\W}_{i,\jmh} \Lambda_{y,i,\jmh}  \jump{\hat{\W}}^k_{y,i,\jmh} \right]\\
			&\le 0 \qquad \qquad \text{					Using sign preserving property \eqref{eq:sign_preser}}.
		\end{align*}
	\end{proof}
	
	To compute the solutions close to isotropic (MHD) limits, in some test cases, we consider an additional source term in the equations \eqref{eq:cgl_ref_godunov} by adding,
	\begin{equation}
		\label{eq:src_mhd}
		\s=\Biggl\{0,~0,~0,~0,~\frac{\per - \pll}{\tau},~0,~0,~0,~0\Biggr\}^\top,
	\end{equation}
	with $\tau=10^{-5}$. This effects only $p_{\parallel}$ equation \eqref{eq:cgl_con_3}. In that case, \eqref{eq:semi-discrete_final_scheme} results in,
	\begin{align}
		\label{eq:semi-discrete_final_scheme_source}
		\frac{d}{d t}\con_{i,j}&+\frac{\hat{\F}^k_{x, i+\frac{1}{2}, j}-\hat{\F}^k_{x, i-\frac{1}{2}, j}}{\Delta x}+\frac{\hat{\F}^k_{y, i, j+\frac{1}{2}}-\hat{\F}^k_{y, i, j-\frac{1}{2}}}{\Delta y}\nonumber&\\&+\phi'(\evar_{i, j})^\top \bigg(\left(\frac{\p B_x}{\p x}\right)_{i,j} + \left(\frac{\p B_y}{\p y}\right)_{i,j}\bigg)\nonumber \\
		&+\bc_x(\con_{i,j})\left(\frac{\p \con}{\p x}\right)_{i,j} +\bc_y(\con_{i,j}) \left(\frac{\p \con}{\p y}\right)_{i,j}=\s(\con_{i,j}).
	\end{align}
	In that case \eqref{eq:semi-disc_ent_stab}, results in,
	\begin{align}
		\frac{d}{dt}  \ent(\con_{i,j})  &+\frac{1}{\Delta x} \left( \hat{\entf}^k_{x,i+\frac{1}{2},j} - \hat{\entf}^k_{x,i-\frac{1}{2},j}\right)+\frac{1}{\Delta y}\left( \hat{\entf}^k_{y,i,j+\frac{1}{2}} - \hat{\entf}^k_{y,i,j-\frac{1}{2}}\right)\nonumber \\
		& \le \evar_{i,j}^\top\cdot \s(\con_{i,j}) = -\frac{\rho_{i,j}}{p_{\parallel, i,j} p_{\perp,i,j} \tau}(p_{\parallel,i,j} - p_{\perp,i,j})^{2} \le 0.
		\label{eq:semi-disc_ent_stab_source}
	\end{align}
	\section{Fully discrete schemes}
	\label{sec:time_disc}
	The semi-discrete scheme \eqref{eq:semi-discrete_final_scheme_source}, can be written as 
	\begin{equation}
		\frac{d }{dt}\con_{i,j}(t) = \mathcal{L}_{i,j}(\con(t)) +\s(\con_{i,j}(t))
		\label{fullydiscrete}
	\end{equation}
	where,
	\begin{align*}
		\mathcal{L}_{i,j}(\con(t))
		&=-\frac{\hat{\F}^k_{x, i+\frac{1}{2}, j}-\hat{\F}^k_{x, i-\frac{1}{2}, j}}{\Delta x}-\frac{\hat{\F}^k_{y, i, j+\frac{1}{2}}-\hat{\F}^k_{y, i, j-\frac{1}{2}}}{\Delta y}\nonumber&\\&-\phi'(\evar_{i, j})^\top \bigg(\left(\frac{\p B_x}{\p x}\right)_{i,j} + \left(\frac{\p B_y}{\p y}\right)_{i,j}\bigg) \\
		&-\bc_x(\con_{i,j})\left(\frac{\p \con}{\p x}\right)_{i,j} -\bc_y(\con_{i,j}) \left(\frac{\p \con}{\p y}\right)_{i,j}.
	\end{align*}
	To integrate the ODE~\eqref{fullydiscrete}, we use explicit and Implicit-Explicit (IMEX) schemes (see \cite{gottlieb2001strong,pareschi2005implicit,bhoriya2023high}).
	\subsection{Explicit schemes}
	Following \cite{gottlieb2001strong}, we use explicit strong stability preserving Runge Kutta (SSP-RK) methods for the time discretization. The second and third-order accurate SSP-RK schemes are as follows:
	\begin{enumerate}
		\item Set $\con^{0} \ = \ \con^n$.
		\item For $k$ in $\{1,\dots,m+1\}$, compute
		{\small
			\begin{eqnarray*}
				\con_{i,j}^{(k)} \
				= \
				\sum_{l=0}^{k-1}\gamma_{kl}\con_{i,j}^{(l)}
				+
				\delta_{kl}\Delta t \Big(\mathcal{L}_{i,j}(\con^{(l)}) + \s(\con_{i,j}^{(l)}) \Big),
		\end{eqnarray*}}
		where $\gamma_{kl}$ and $\delta_{kl}$ are given in the Table~\eqref{table:ssp}.
		\item Finally, $\con_{i,j}^{n+1} \ =  \ \con_{i,j}^{(m+1)}$.
	\end{enumerate}
	\begin{table}[h]
		\caption{Coefficients for Explicit SSP-Runge-Kutta time stepping}
		\centering
		\begin{tabular}{l|ccc|ccc}
			\hline
			Order & \hspace{2.0cm}$\gamma_{il}$ & & & \hspace{2.0cm}$\delta_{il}$ & & \\
			\hline
			2 & 1 & & & 1 & & \\
			& 1/2 & 1/2 & & 0 & 1/2 & \\
			\hline
			3 & 1 & & & 1 & & \\
			& 3/4 & 1/4 & & 0 & 1/4 & \\
			& 1/3 & 0 & 2/3 & 0 & 0 & 2/3 \\
			\hline
		\end{tabular}
		\label{table:ssp}
	\end{table}
	The fourth order SSP-RK scheme \cite{gottlieb2001strong} is given by, 
	\begin{subequations}
		\begin{align*}
			\con^{(1)}_{i,j} &= \textbf{U}^n_{i,j} + 0.39175222700392 \Delta t \big(\mathcal{M}_{i,j}(\con^n) \big)\\
			\con^{(2)}_{i,j} &= 0.44437049406734 \con^n_{i,j} + 0.55562950593266 \con^{(1)}_{i,j} \\
			&+0.36841059262959  \Delta t \big( \mathcal{M}_{i,j}(\con^{(1)}) \big)\\
			\con^{(3)}_{i,j} &= 0.62010185138540 \con^n _{i,j}+ 0.37989814861460 \con^{(2)}_{i,j} \\
			&+0.25189177424738  \Delta t \big( \mathcal{M}_{i,j}(\con^{(2)}) \big)\\
			\con^{(4)}_{i,j} &= 0.17807995410773 \con^n_{i,j} + 0.82192004589227 \con^{(3)}_{i,j} \\
			&+ 0.54497475021237  \Delta t \big( \mathcal{M}_{i,j}(\con^{(3)}) \big)\\
			\con^{n+1}_{i,j} &= 0.00683325884039 \con^n_{i,j} + 0.51723167208978 \con^{(2)}_{i,j} + 0.12759831133288 \con^{(3)}_{i,j}\\
			&+ 0.34833675773694 \con^{(4)}_{i,j}+ 0.08460416338212  \Delta t \big( \mathcal{M}_{i,j}(\con^{(3)}) \big)\\
			&+ 0.22600748319395  \Delta t \big( \mathcal{M}_{i,j}(\con^{(4)}) \big).
		\end{align*}	
	\end{subequations}
	where $\mathcal{M}_{i,j}(\con^n)=\mathcal{L}_{i,j}(\con^n) + \s(\con^n_{i,j})$. 
	
	We will use explicit schemes to compute CGL solutions only, i.e. $\s$ in \eqref{eq:src_mhd} is ignored. We are  now denoting explicit schemes without source term $\s$, as follows:
	\begin{enumerate}
		\item \ote: We consider second-order time stepping with MinMod reconstruction for scaled entropy variables. We use the second-order central difference for non-conservative terms. The source term is ignored.
		
		\item \othe: We consider third-order time stepping with third-order ENO reconstruction for scaled entropy variables. We use the fourth-order central difference for non-conservative terms. The source term is ignored.
		
		\item \ofe: We consider fourth-order time stepping with fourth-order ENO reconstruction for scaled entropy variables. We use the fourth-order central difference for non-conservative terms. The source term is ignored.
	\end{enumerate}
	\subsection{ARK IMEX schemes} \label{sec:imex_scheme}
	The source term $\s$ contains the parameter $\tau$, which makes the source term stiff. To avoid the small time step restriction required for stability of the explicit scheme, treat the source term implicitly. In particular, we use the Additive Runge-Kutta (ARK) IMEX scheme. For the second-order discretization, we use the L-Stable second-order accurate ARK-IMEX scheme from \cite{pareschi2005implicit,bhoriya2023high}. The method has two implicit stages and is given by
	\begin{subequations}
		\begin{align*}
			\mathbf{U}_{i,j}^{(1)} &= \mathbf{U}^n_{i,j} + \Delta t \ \big( \beta \s(\mathbf{U}^{(1)}_{i,j} \big), \\
			\mathbf{U}^{(2)}_{i,j} &= \mathbf{U}^n_{i,j} + \Delta t \ \big( \mathcal{L}_{i,j}(\mathbf{U}^{(1)}) + (1-2\beta) \s(\mathbf{U}^{(1)}_{i,j})  + \beta \s(\mathbf{U}^{(2)}_{i,j}) \big), \\
			\mathbf{U}_{i,j}^{(n+1)} &= \mathbf{U}^n_{i,j} + \Delta t \ \bigg( \dfrac{1}{2}\mathcal{L}_{i,j}(\mathbf{U}^{(1)}) + \dfrac{1}{2}\mathcal{L}_{i,j}(\mathbf{U}^{(2)}) + \dfrac{1}{2} \s(\mathbf{U}^{(1)}_{i,j})
			+ \dfrac{1}{2} \s(\mathbf{U}^{(2)}_{i,j}) \bigg).
		\end{align*}
	\end{subequations}
	where $\beta = 1 - \dfrac{1}{\sqrt{2}}$.\\	
	For the third and fourth-order ARK scheme, we follow  \cite{kennedy2003additive}. In the ARK-IMEX schemes, we need to solve a system of nonlinear equations to solve the Implicit source part. We use the Newton solver, where the convergence and the next direction search are based on the back-tracing line search framework. A detailed procedure is given in \cite{dennis1996numerical}.
	
	We will use IMEX schemes to compute CGL equations with $\s$ in \eqref{eq:src_mhd}. We are denoting these IMEX schemes, with source terms $\s$, as follows:
	\begin{enumerate}
		\item \oti: We consider second-order time stepping with MinMod reconstruction for scaled entropy variables. We use the second-order central difference for non-conservative terms. Source term $\s$ is treated implicitly. 
		
		\item \othi: We consider third-order time stepping with third-order ENO reconstruction for scaled entropy variables. We use the fourth-order central difference for non-conservative terms. Source term $\s$ is treated implicitly. 
		
		\item \ofi: We consider fourth-order time stepping with fourth-order ENO reconstruction for scaled entropy variables. We use the fourth-order central difference for non-conservative terms.  Source term $\s$ is treated implicitly. 
	\end{enumerate}

	\section{Numerical results}
\label{sec:num_res}
In this Section, we present several test cases in one dimension. We also present a two-dimensional Orzag-Tang vertex test case, which is generalized from the MHD test case. The time step is computed using 
$$
\Delta t = \text{CFL} \frac{1}{\max_{i,j} \left( \frac{|\lambda_x(\con_{i,j})|}{\Delta x} + \frac{|\lambda_x(\con_{i,j})|}{\Delta y} \right)},
$$
where, $\lambda_x$ is the maximum eigenvalue in $x$-direction and $\lambda_y$ is the maximum eigenvalue in $y$-direction. To measure the entropy decay for each test case, we also measure 
\begin{align}
	\label{eq:tot_ent_exp}
	\sum_{i,j} \Big[\ent^{n+1}(\con_{i,j}) - \ent^{n}(\con_{i,j})-\frac{\Delta t}{\Delta x} \left( \hat{\entf}^k_{x,i+\frac{1}{2},j} - \hat{\entf}^k_{x,i-\frac{1}{2},j}\right)\nonumber\\
	-\frac{\Delta t}{\Delta y}\left( \hat{\entf}^k_{y,i,j+\frac{1}{2}} - \hat{\entf}^k_{y,i,j-\frac{1}{2}}\right)\Big],
\end{align}
for explicit schemes (see Equation \eqref{eq:semi-disc_ent_stab}). This measures the total entropy change after every time step. Similarly, for the implicit scheme, we measure 
\begin{align}
	\sum_{i,j}\Big[ \ent^{n+1}(\con_{i,j}) - \ent^{n}(\con_{i,j})&-\frac{\Delta t}{\Delta x} \left( \hat{\entf}^k_{x,i+\frac{1}{2},j} - \hat{\entf}^k_{x,i-\frac{1}{2},j}\right) \nonumber\\
	&-\frac{\Delta t}{\Delta y}\left( \hat{\entf}^k_{y,i,j+\frac{1}{2}} - \hat{\entf}^k_{y,i,j-\frac{1}{2}}\right)-\Delta t \evar_{i,j}\s(\con_{i,j})\Big].
	\label{eq:tot_ent_imp}
\end{align}	
which is consistent with Eqn. \eqref{eq:semi-disc_ent_stab_source}, which also takes source term into consideration. When computing with the source term, we expect pressure to be isotropic (i.e. $\pll$ and $\per$ should be equal); hence, the solution should be closer to the MHD limit. To compare with the MHD solution, we have plotted the corresponding MHD solution, which is computed using Rusanov solver, with MinMod reconstruction. In one-dimensional test cases, we have used $10000$ cells. For the Orszag-Tang vertex two-dimensional test case, we have used $1000\times1000$ cells to compute the reference MHD solution.
\subsection{One-dimensional test problems} 
\subsubsection{Accuracy test} 
\label{test:at}
To assess the accuracy of the proposed schemes, we consider a problem with a smooth solution. The computational domain is taken to be $[0,1]$ with periodic boundary conditions. Initially, the density profile is assumed to be $\rho(x,0) = 2+ \sin{(2\pi x)}$ which is then advected with velocity $\bu = (1,0,0)$. The pressures are kept constant at $\pll = \per = 1.0$. We also take $\B = (1,1,0)$. The exact solution is an advection of the density profile given by $\rho(x,t) = 2+ \sin{(2\pi (x-t))}$. 
\begin{table}[tbhp]
	\footnotesize
	\caption{\textbf{\nameref{test:at}}: $L^1$ errors and order of accuracy for $\rho$.}
	\label{tab:2}
	\begin{center}
		\begin{tabular}{c|c|c|c|c|c|c|}
			\hline Number of cells  & \multicolumn{2}{|c}{\ote} &  \multicolumn{2}{|c}{\othe} & \multicolumn{2}{|c}{\ofe}  \\
			\hline   & $L^1$ error  &  Order &  $L^1$ error      & Order & $L^1$ error      & Order \\
			\hline 40 & 1.30241e-01 & -- & 5.53282e-03 & -- & 8.98990e-04 & -- \\
			80 & 5.17593e-02 & 1.33 & 7.07952e-04 & 2.96629 & 8.19842e-05 & 3.45489 \\
			160 & 1.65341e-02 & 1.65 & 8.84319e-05 & 3.00101 & 5.86817e-06 & 3.80436 \\
			320 & 4.57448e-03 & 1.86 & 1.10535e-05 & 3.00007 & 4.11943e-07 & 3.83239 \\
			640 & 1.24571e-03 & 1.88 & 1.38170e-06 & 2.99998 & 2.84900e-08 & 3.85391 \\
			1280 & 3.35275e-04 & 1.89 & 1.72720e-07 & 2.99993 & 1.92548e-09 & 3.88717 \\
			\hline
		\end{tabular}
	\end{center}
\end{table} 
\begin{table}[tbhp]
	\footnotesize
	\caption{\textbf{\nameref{test:at}}: $L^1$ errors and order of accuracy for $\rho$.}
	\label{tab:3}
	\begin{center}
		\begin{tabular}{c|c|c|c|c|c|c|}
			\hline Number of cells  & \multicolumn{2}{|c}{\oti} &  \multicolumn{2}{|c}{\othi} & \multicolumn{2}{|c}{\ofi}  \\
			\hline   & $L^1$ error  &  Order &  $L^1$ error      & Order & $L^1$ error      & Order \\
			\hline 40 & 1.30187e-01 & -- & 5.57809e-03 & -- & 8.90421e-04 & -- \\
			80 & 5.16249e-02 & 1.33 & 7.19948e-04 & 2.95381 & 8.24102e-05 & 3.43359 \\
			160 & 1.65536e-02 & 1.64 & 9.01302e-05 & 2.99781 & 5.96265e-06 & 3.7888 \\
			320 & 4.58388e-03 & 1.85 & 1.12664e-05 & 2.99999 & 4.19101e-07 & 3.83059 \\
			640 & 1.24898e-03 & 1.88 & 1.40835e-06 & 2.99994 & 2.90423e-08 & 3.85107 \\
			1280 & 3.36345e-04 & 1.89 & 1.76049e-07 & 2.99996 & 1.95243e-09 & 3.89482 \\
			\hline
		\end{tabular}
	\end{center}
\end{table} 
The simulations are conducted till the final time $t = 2.0$. Table \eqref{tab:2} presents the $L^1$-errors of the density for the \ote, \othe~and \ofe~schemes. We observe that all the schemes have the expected order of accuracy. Similarly, in Table \eqref{tab:3}, we present the $L^1$-errors of the density for the \oti, \othi~and \ofi~schemes. Again, we see that all the schemes have a theoretical order of accuracy.
\subsubsection{Riemann problem 1: Brio-Wu shock tube problem}
\label{test:bw}
This test case is a generalized form of the MHD test case proposed by Brio and Wu \cite{Brio1988}. We consider the computational domain of $[-1,1]$ with outflow boundary conditions. The initial discontinuity is centred at $x=0$, with initial states given by,
\[(\rho, \bu, \pll, \per, B_{y}, B_{z}) = \begin{cases}
	(1, 0, 0, 0, 1, 1, 1, 0), & \textrm{if } x\leq 0\\
	(0.125, 0, 0, 0, 0.1, 0.1, -1, 0), & \textrm{otherwise}
\end{cases}\]
We consider $B_{x}=0.75$ and perform the computations till final time $t=0.2$ on $2000$ cells.
\begin{figure}[!htbp]
	\begin{center}
		\subfloat[Anisotropic case: Density for explicit schemes without source term]{\includegraphics[width=1.5in, height=1.5in]{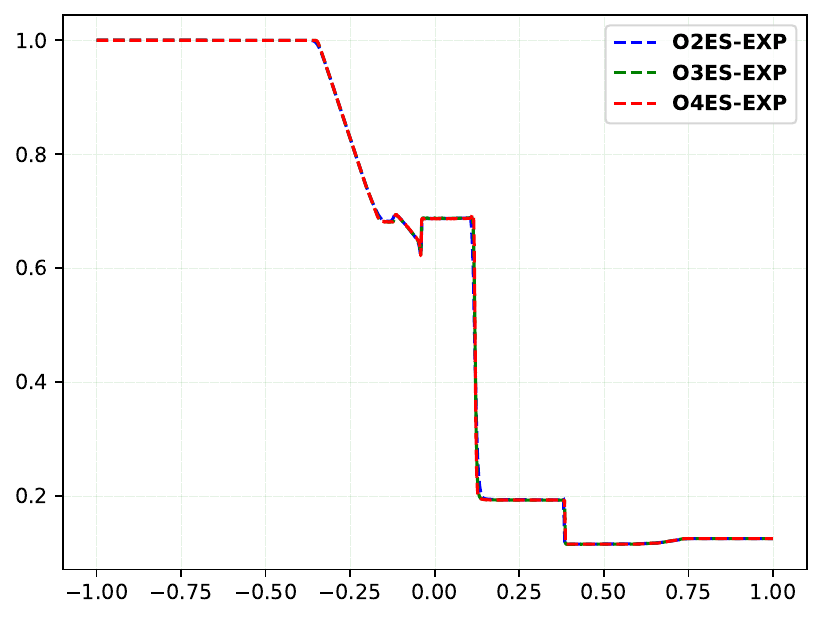} \label{fig:bw_exp_rho}}~
		\subfloat[Anisotropic case: Pressure component $\pll$ for explicit schemes without source term]{\includegraphics[width=1.5in, height=1.5in]{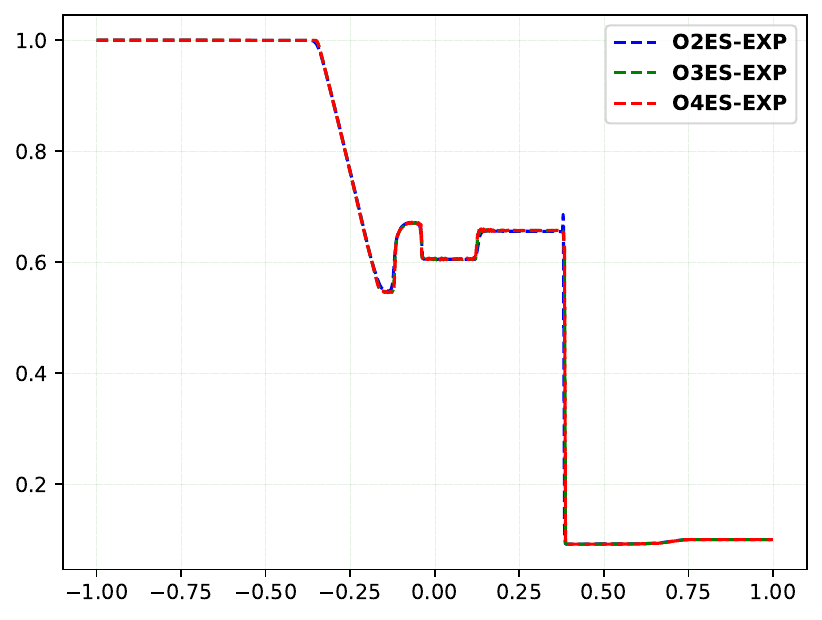}\label{fig:bw_exp_pll}}~
		\subfloat[Anisotropic case: Pressure component $\per$ for explicit schemes without source term]{\includegraphics[width=1.5in, height=1.5in]{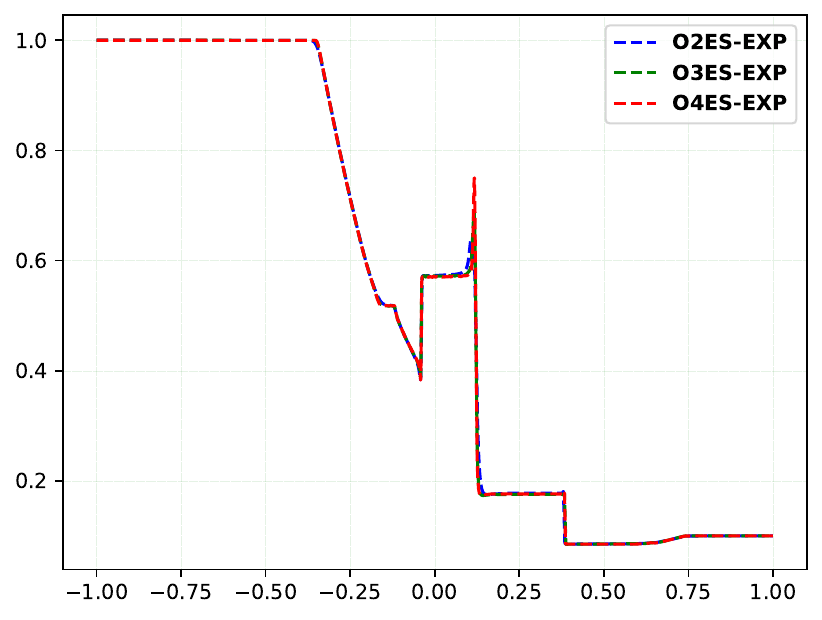}\label{fig:bw_exp_perp}}\\
		\subfloat[Isotropic case: Density for IMEX schemes with source term]{\includegraphics[width=1.5in, height=1.5in]{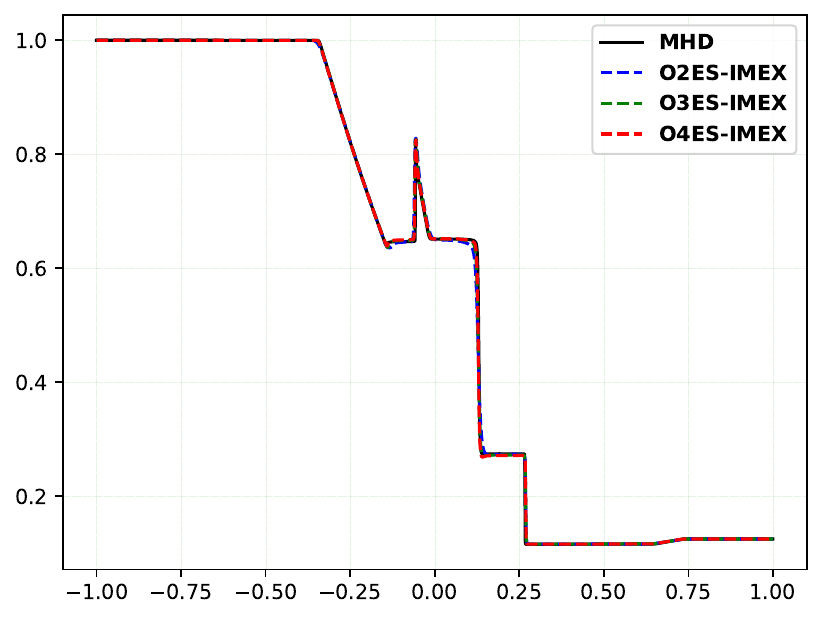}\label{fig:bw_imp_rho}}~
		\subfloat[Isotropic case: Pressure component $\pll$ for IMEX schemes with source term]{\includegraphics[width=1.5in, height=1.5in]{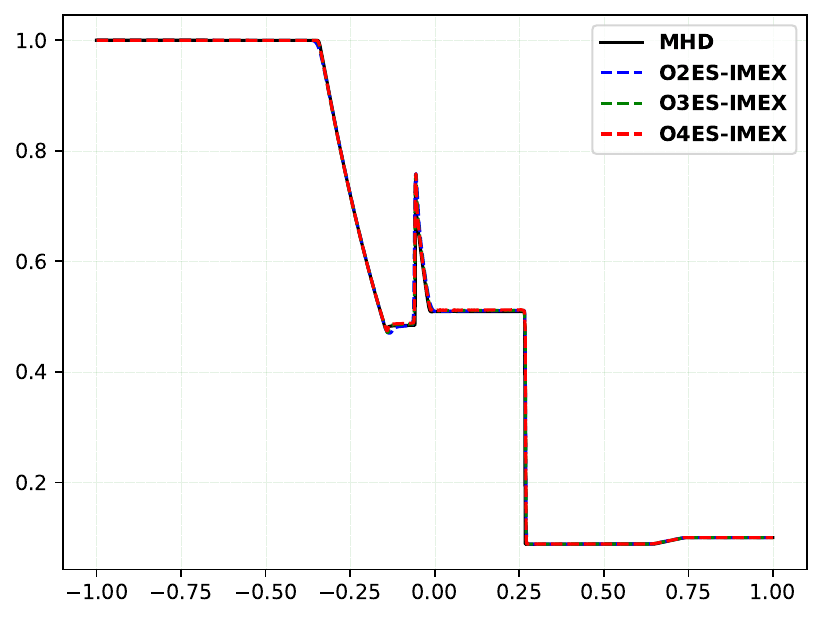}\label{fig:bw_imp_pll}}~
		\subfloat[Isotropic case: Pressure component $\per$ for IMEX schemes with source term]{\includegraphics[width=1.5in, height=1.5in]{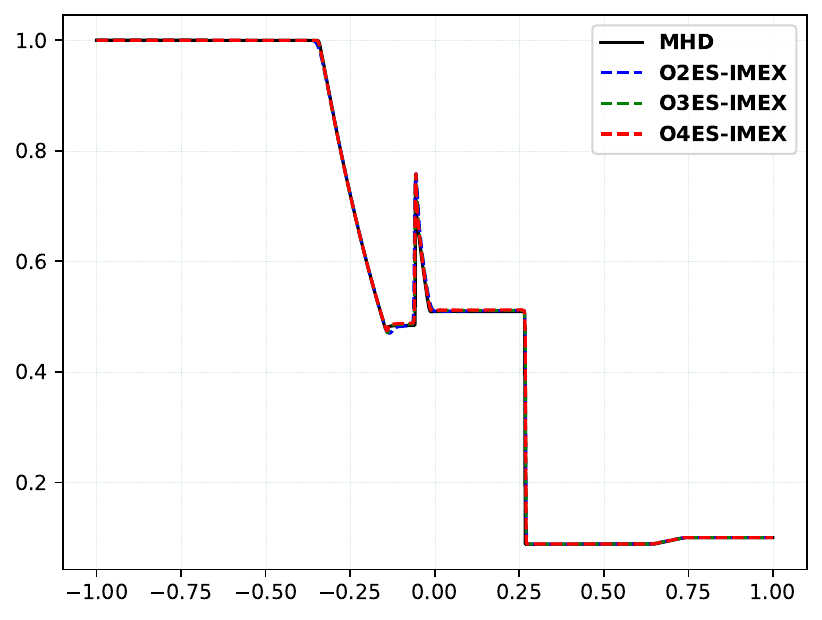}\label{fig:bw_imp_perp}}\\
		\subfloat[Anisotropic case: Total entropy decay at each time step for explicit schemes]{\includegraphics[width=2.0in, height=1.5in]{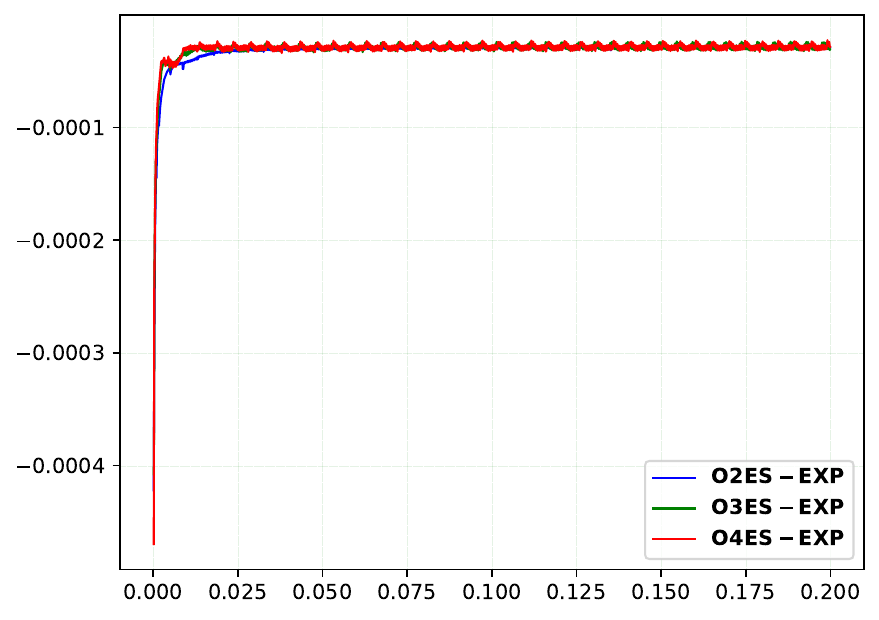}\label{fig:bw_exp_ent}}~
		\subfloat[Isotropic case: Total entropy decay at each time step for IMEX schemes with source term]{\includegraphics[width=2.0in, height=1.5in]{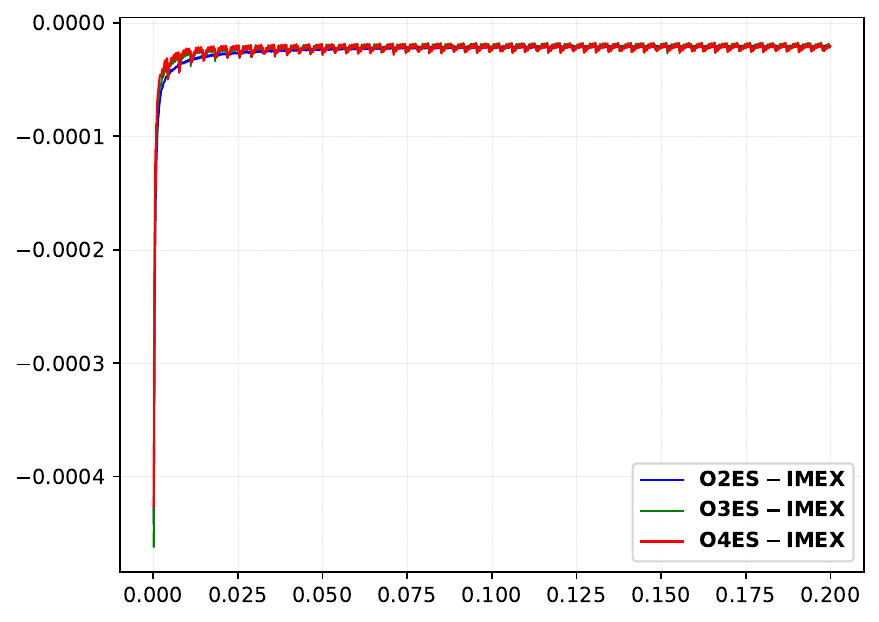}\label{fig:bw_imp_ent}}
		\caption{\textbf{\nameref{test:bw}}: Plots of density, parallel and perpendicular pressure components, and total entropy decay at each time step for explicit schemes without source term and IMEX scheme with source term using $2000$ cells at final time $t = 0.2$.}
		\label{fig:bw}
	\end{center}    
\end{figure}
Numerical results are presented in Figure \eqref{fig:bw}. In Figures \eqref{fig:bw_exp_rho}, \eqref{fig:bw_exp_pll} and \eqref{fig:bw_exp_perp}, we have presented the density, parallel component of pressure, and perpendicular component of the pressure using explicit schemes. We observe all the schemes have resolved all the waves. We also note that both components of the pressures are different, as expected, because we are computing anisotropic solutions. The results are similar to the anisotropic solution given in \cite{Hirabayashi2016new}. We also note that \ote~is most diffusive compared to \othe~and \ofe.

The isotropic solutions (with source terms) with IMEX schemes are presented in Figures \eqref{fig:bw_imp_rho}, \eqref{fig:bw_imp_pll} and \eqref{fig:bw_imp_perp}. We observe that the profile of $\pll$ and $\per$ are the same due to the effect of the source term. Furthermore, the solution for all the components matches the reference MHD solution.  This is similar to the isotropic solution presented in \cite{Hirabayashi2016new}. For the IMEX schemes, we again observe that the \oti~is the most diffusive, with \othi~and \ofi~being comparable. 

In Figures \eqref{fig:bw_exp_ent} and \eqref{fig:bw_imp_ent}, we have plotted the total entropy change at each time step for explicit and IMEX schemes, respectively. For schemes to be entropy stable, we need these values to be less than zero. At the same time, we desire that the schemes do not decay too much entropy if no new discontinuity is generated in the solution. We see that all schemes decay lots of entropy initially, as we have several discontinuities being generated. After that, as no new discontinuity is generated, all the schemes decay much less entropy. Again, we see that both second-order schemes, \ote~and \oti~are most diffusive as they decay most entropy, followed by third-order and fourth-order schemes. We also see that there is not much difference in the entropy decay of explicit and IMEX schemes of the same order.
\subsubsection{Riemann problem 2: Ryu-Jones problem}
\label{test:ryu}
In this test case, we consider a generalization of the Ryu–Jones test problem \cite{Ryu1995Numerical}. The computational domain is taken to be $[-0.5,0.5]$, with outflow boundary conditions. The initial conditions are given by,
\[(\rho, \bu, \pll, \per, B_{y}, B_{z}) = \begin{cases}
	(1.08, 1.2, 0, 0, 0.95, 0.95, \frac{3.6}{\sqrt{4\pi}}, \frac{2}{\sqrt{4\pi}}), & \textrm{if } x\leq 0\\
	(1, 0, 0, 0, 1, 1, \frac{4}{\sqrt{4\pi}}, \frac{2}{\sqrt{4\pi}}), & \textrm{otherwise}
\end{cases}\]
with $B_{x}=\frac{2}{\sqrt{4\pi}}$. We compute the solutions using $2000$ cells till the final time $t=0.2$. 
\begin{figure}[!htbp]
	\begin{center}
		\subfloat[Anisotropic case: Density for explicit schemes without source term]{\includegraphics[width=1.5in, height=1.5in]{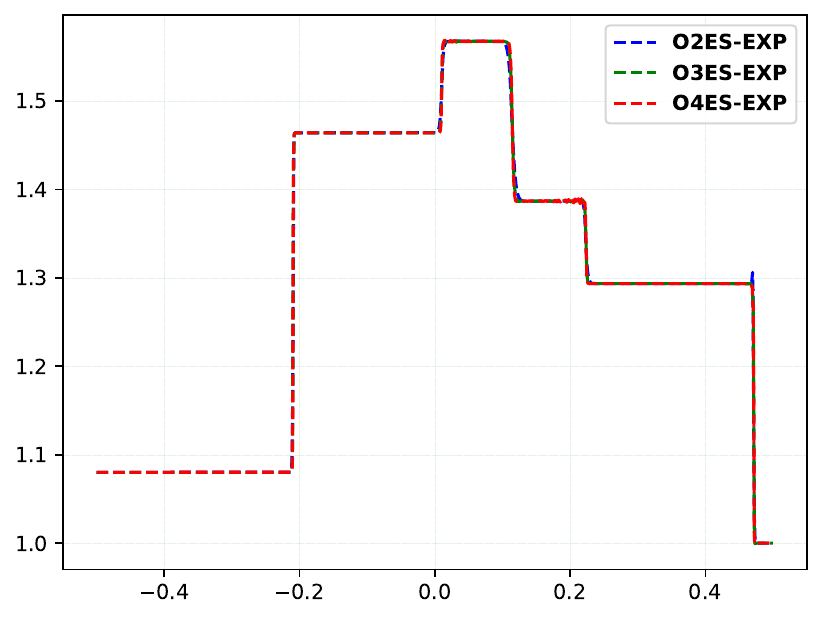} \label{fig:ryu_exp_rho}}~
		\subfloat[Anisotropic case: Pressure component $\pll$ for explicit schemes without source term]{\includegraphics[width=1.5in, height=1.5in]{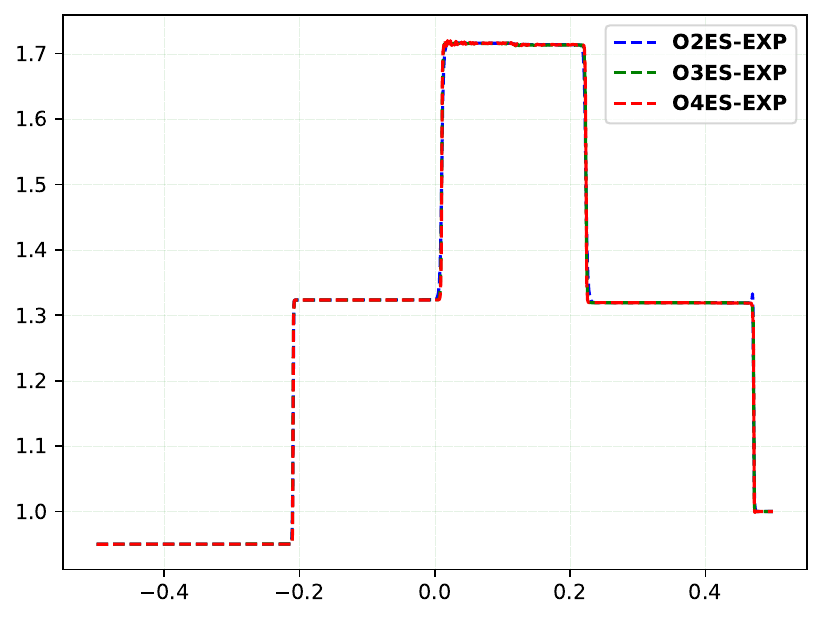}\label{fig:ryu_exp_pll}}~
		\subfloat[Anisotropic case: Pressure component $\per$ for explicit schemes without source term]{\includegraphics[width=1.5in, height=1.5in]{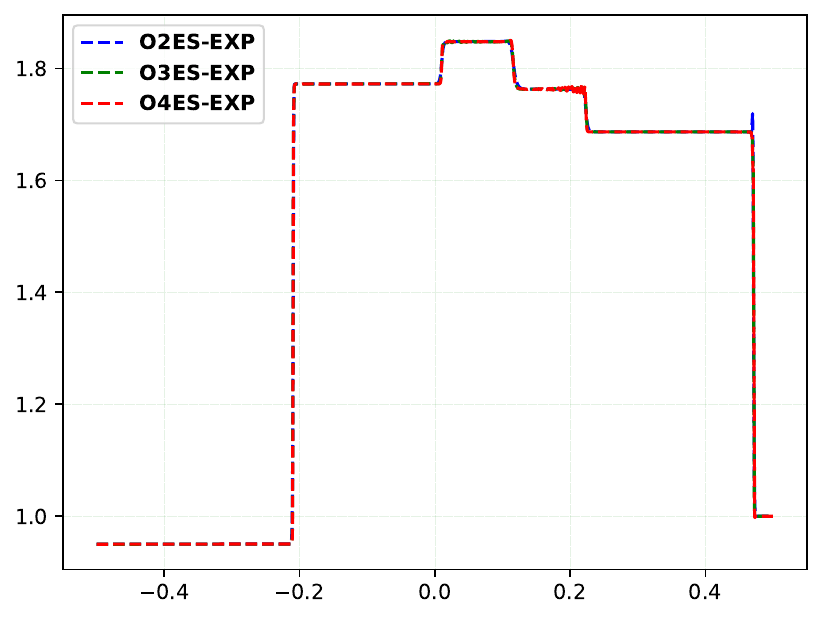}\label{fig:ryu_exp_perp}}\\
		\subfloat[Isotropic case: Density for IMEX schemes with source term]{\includegraphics[width=1.5in, height=1.5in]{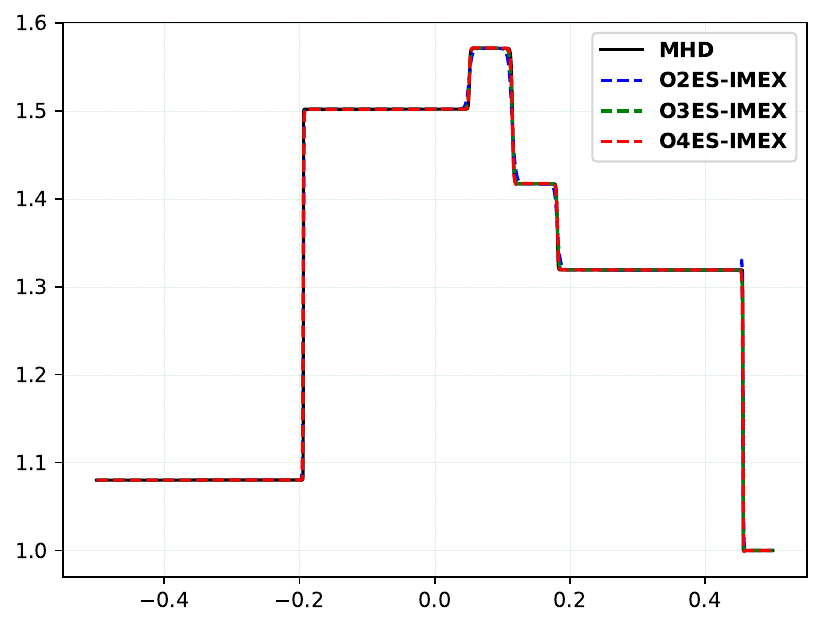}\label{fig:ryu_imp_rho}}~
		\subfloat[Isotropic case: Pressure component $\pll$ for IMEX schemes with source term]{\includegraphics[width=1.5in, height=1.5in]{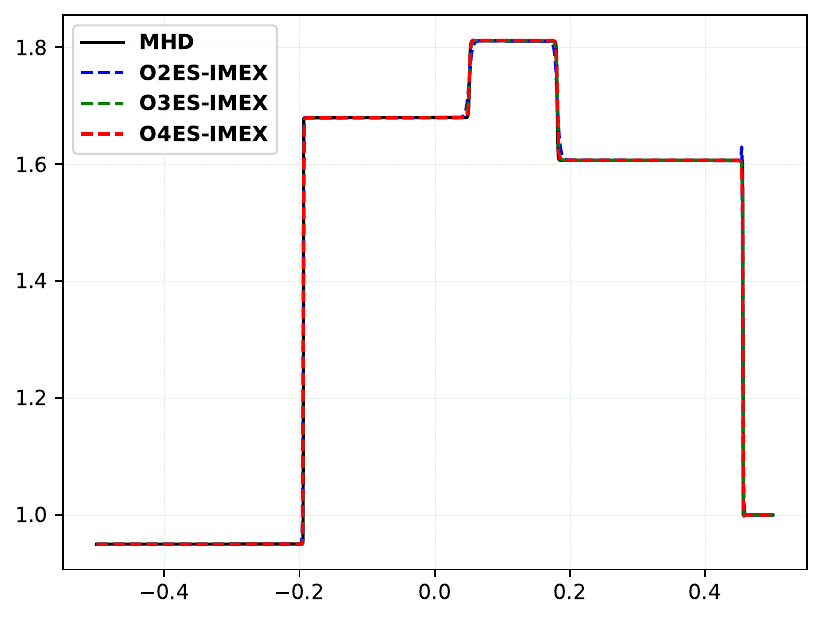}\label{fig:ryu_imp_pll}}~
		\subfloat[Isotropic case: Pressure component $\per$ for IMEX schemes with source term]{\includegraphics[width=1.5in, height=1.5in]{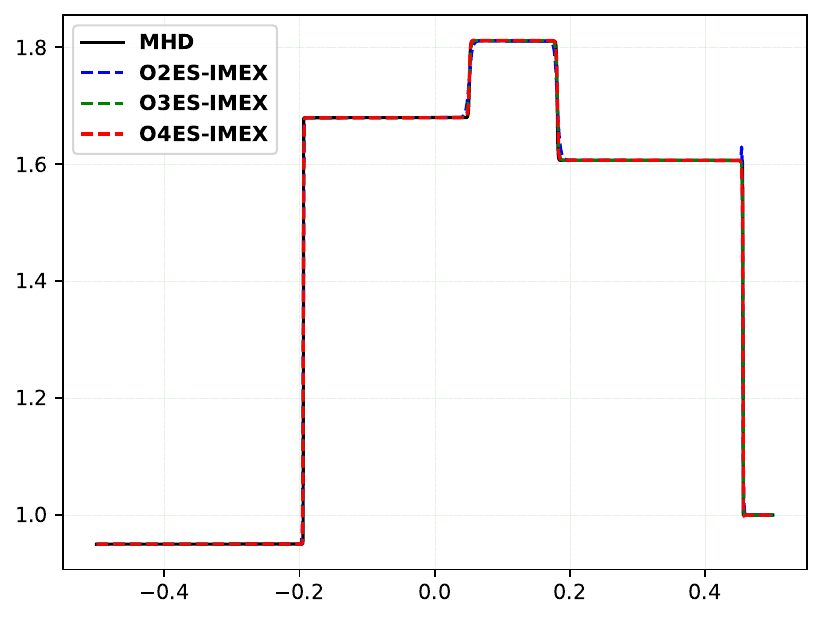}\label{fig:ryu_imp_perp}}\\
		\subfloat[Anisotropic case: Total entropy decay at each time step for explicit schemes]{\includegraphics[width=2.0in, height=1.5in]{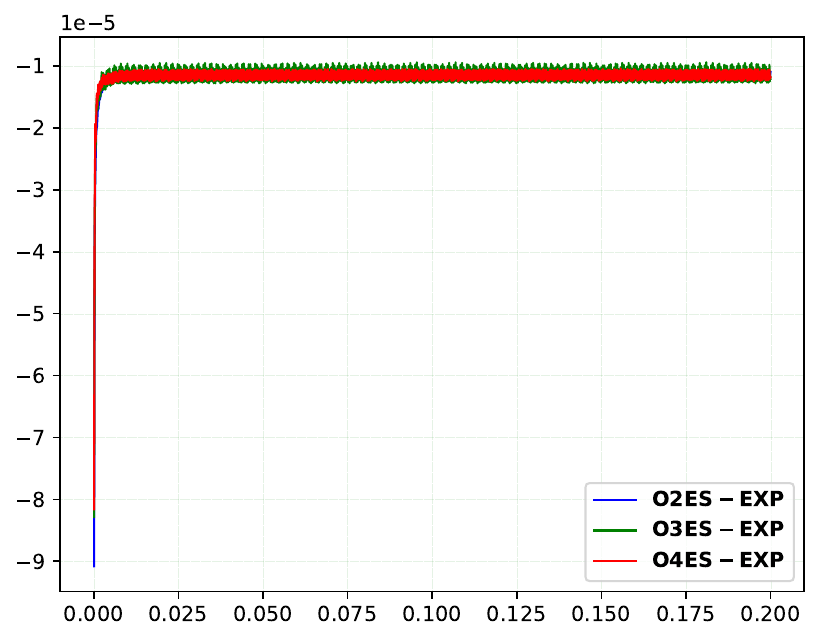}\label{fig:ryu_exp_ent}}~
		\subfloat[Isotropic case: Total entropy decay at each time step for IMEX schemes with source term]{\includegraphics[width=2.0in, height=1.5in]{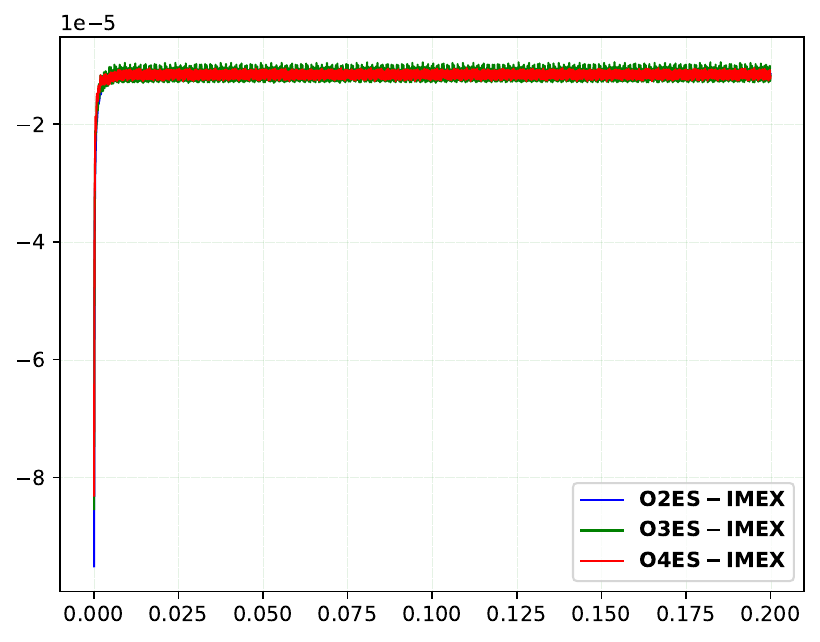}\label{fig:ryu_imp_ent}}
		\caption{\textbf{\nameref{test:ryu}}: Plots of density, parallel and perpendicular pressure components, and total entropy decay at each time step for explicit schemes without source term and IMEX scheme with source term using $2000$ cells at final time $t = 0.2$.}
		\label{fig:ryu}
	\end{center}
\end{figure}
Numerical results are presented in Figure \eqref{fig:ryu}. Anisotropic solutions (without source term) using explicit schemes are presented in Figures \eqref{fig:ryu_exp_rho}, \eqref{fig:ryu_exp_pll} and \eqref{fig:ryu_exp_perp}. We observe that the profiles of $\pll$ and $\per$ are different. We see that all the schemes are able to resolve different waves. We do see some small oscillations near some discontinuities. However, these oscillations are stable and do not increase when we refine the mesh. In Figure \eqref{fig:ryu_exp_ent}, we have plotted the total entropy decay for each scheme. We again see that \ote~is most diffusive when compared with \othe~and \ofe~schemes.

Isotropic solutions (with source term) using IMEX schemes are presented in \eqref{fig:ryu_imp_rho}, \eqref{fig:ryu_imp_pll} and \eqref{fig:ryu_imp_perp}. We see that all the variables have converged to the MHD solution. Furthermore, both components of pressure have the same profile, and the solution is isotropic. We again observe that \oti~is the most diffusive, followed by \othi~and \ofi~schemes. In Figure \eqref{fig:ryu_imp_ent}, we have plotted the total entropy decay at each time step. We see that the entropy decays are comparable to the explicit schemes. 
\subsubsection{Riemann problem 3: Super-fast expansion}
\label{test:sf}
In this test, we consider the generalization of the MHD super-fast expansion test case, considered in~\cite{bouchut2007multiwave,fuchs2009splitting,fuchs2011approximate}. The solution contains a low-density, low-pressure area in the middle as the fluid is pushed outward.  We consider the computational domain of $[0,1]$ with outflow boundary conditions. The initial conditions are given by,
\[(\rho, \bu, \pll, \per,B_x, B_{y}, B_{z}) = \begin{cases}
	(1, -3.1, 0, 0, 1, 1, 0, 0.5, 0), & \textrm{if } x\leq 0.5\\
	(1, 3.1, 0, 0, 1, 1, 0, 0.5, 0), & \textrm{otherwise.}
\end{cases}\]
\begin{figure}[!htbp]
	\begin{center}
		\subfloat[Anisotropic case: Density for explicit schemes without source term]{\includegraphics[width=1.5in, height=1.5in]{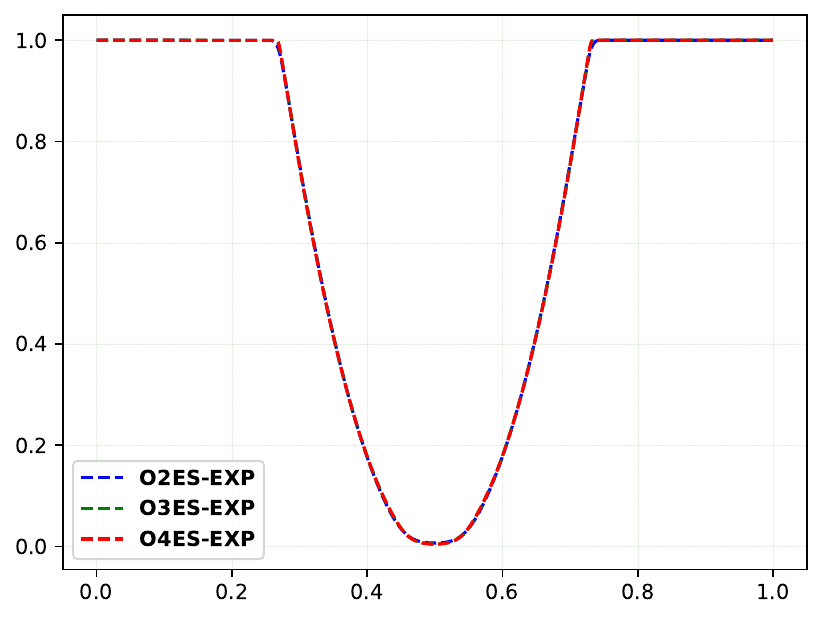} \label{fig:sf_exp_rho}}~
		\subfloat[Anisotropic case: Pressure component $\pll$ for explicit schemes without source term]{\includegraphics[width=1.5in, height=1.5in]{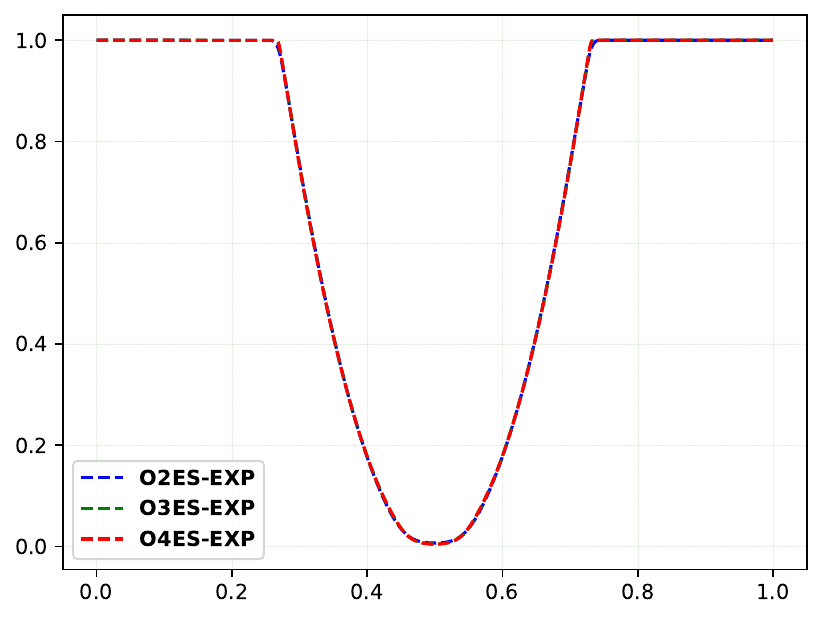}\label{fig:sf_exp_pll}}~
		\subfloat[Anisotropic case: Pressure component $\per$ for explicit schemes without source term]{\includegraphics[width=1.5in, height=1.5in]{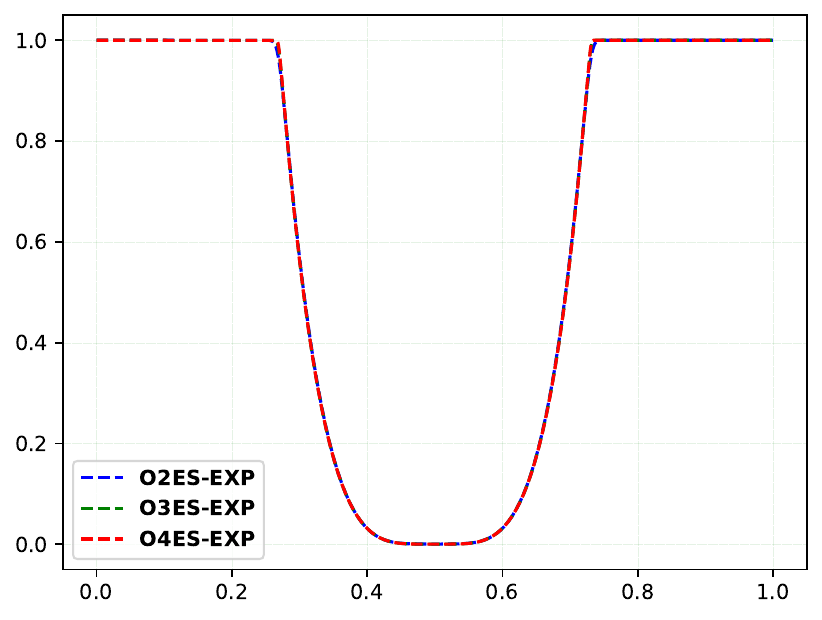}\label{fig:sf_exp_perp}}\\
		\subfloat[Isotropic case: Density for IMEX schemes with source term]{\includegraphics[width=1.5in, height=1.5in]{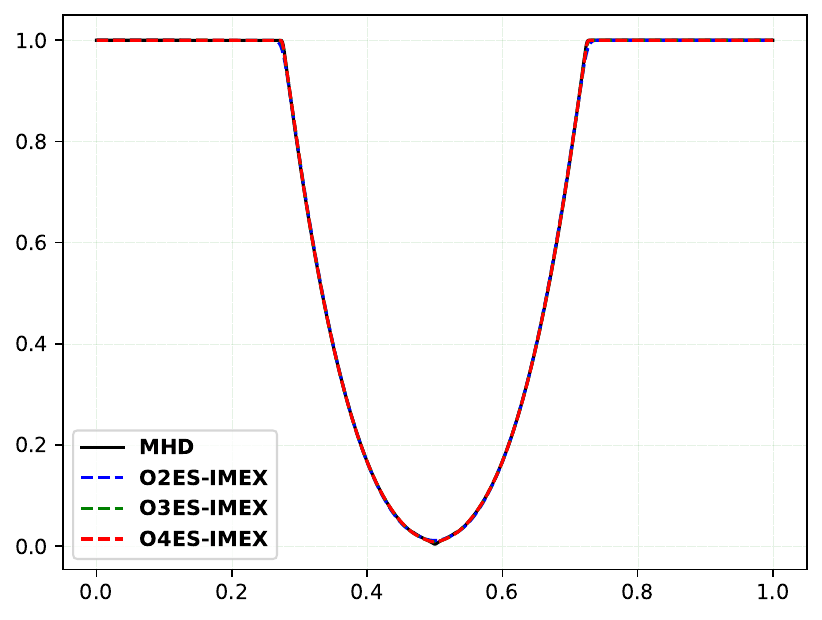}\label{fig:sf_imp_rho}}~
		\subfloat[Isotropic case: Pressure component $\pll$ for IMEX schemes with source term]{\includegraphics[width=1.5in, height=1.5in]{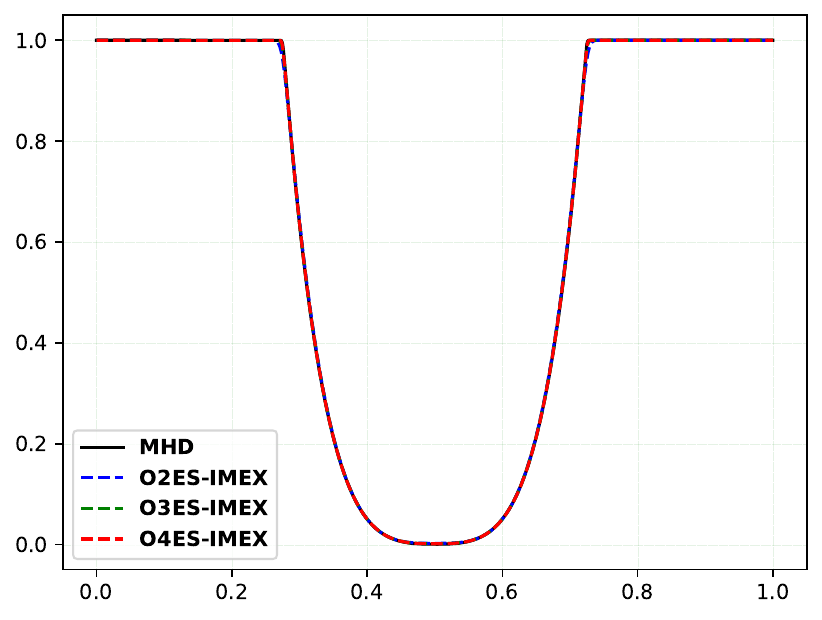}\label{fig:sf_imp_pll}}~
		\subfloat[Isotropic case: Pressure component $\per$ for IMEX schemes with source term]{\includegraphics[width=1.5in, height=1.5in]{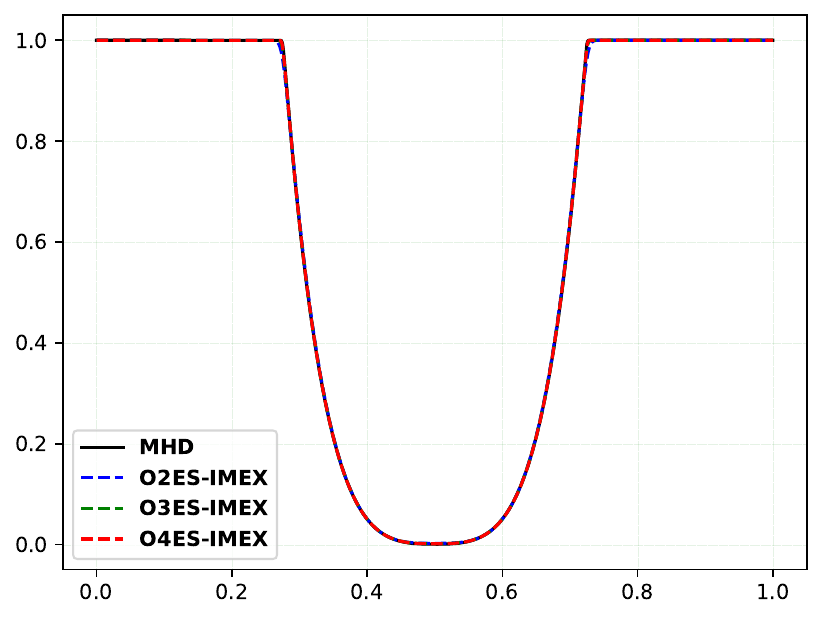}\label{fig:sf_imp_perp}}\\
		\subfloat[Anisotropic case: Total entropy decay at each time step for explicit schemes]{\includegraphics[width=2.0in, height=1.5in]{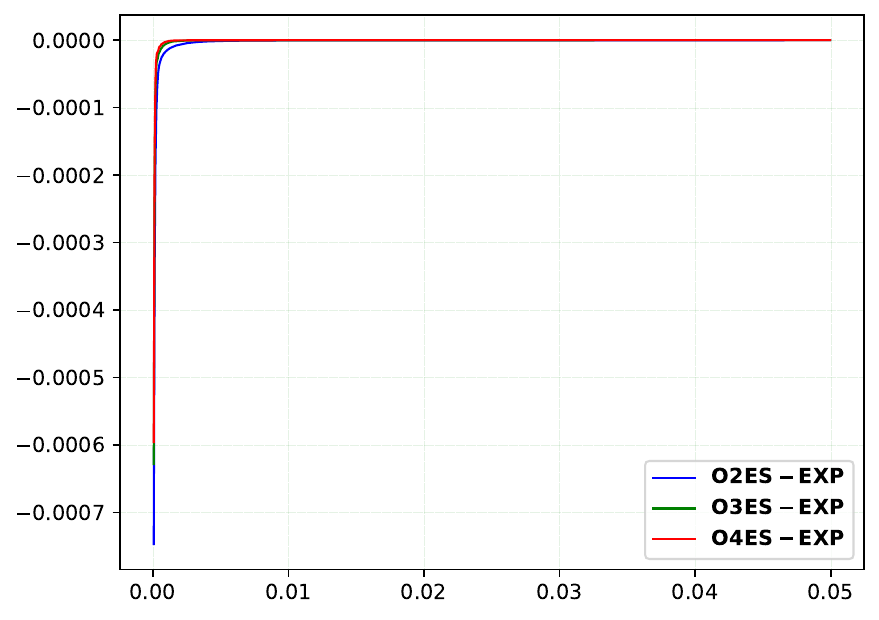}\label{fig:sf_exp_ent}}~
		\subfloat[Isotropic case: Total entropy decay at each time step for IMEX schemes with source term]{\includegraphics[width=2.0in, height=1.5in]{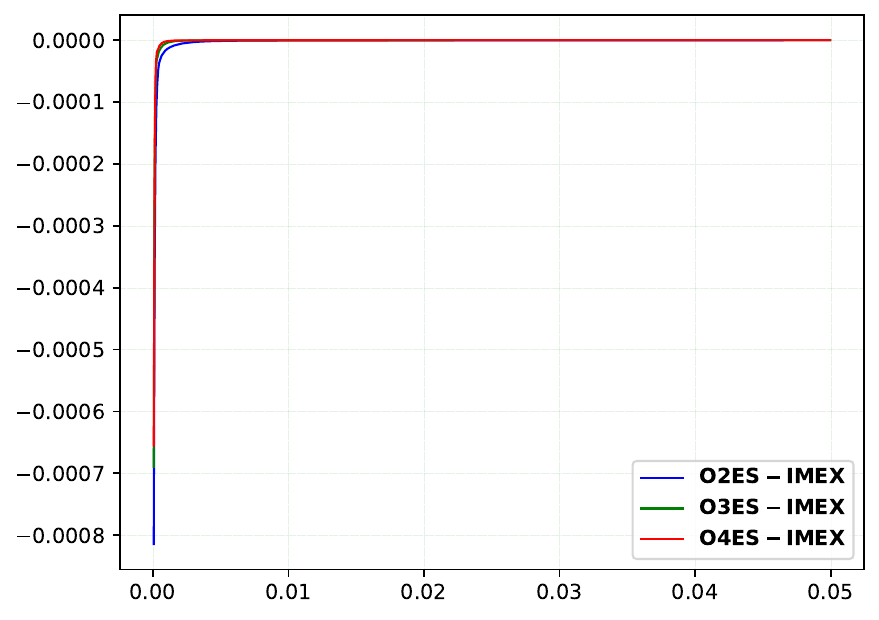}\label{fig:sf_imp_ent}}
		\caption{\textbf{\nameref{test:sf}}: Plots of density, parallel and perpendicular pressure components, and total entropy decay at each time step for explicit schemes without source term and IMEX scheme with source term using $2000$ cells at final time $t = 0.05$.}
		\label{fig:sf}
	\end{center}
\end{figure}
The numerical results are plotted in Figure \eqref{fig:sf} at the final time of $t=0.05$ using $2000$ cells. In Figures \eqref{fig:sf_exp_rho}, \eqref{fig:sf_exp_pll} and \eqref{fig:sf_exp_perp}, we have plotted density, $\pll$ and $\per$ for anisotropic case using explicit schemes. We again see the low-density, low-pressure area is resolved and both components of the pressure are not identical. Similarly, in Figures  \eqref{fig:sf_imp_rho}, \eqref{fig:sf_imp_pll} and \eqref{fig:sf_imp_perp}, we have plotted isotropic solutions using IMEX schemes. We observe that the profiles of pressure components are the same and match the MHD solution. Entropy decays for explicit and IMEX schemes are plotted in Figures \eqref{fig:sf_exp_ent} and \eqref{fig:sf_imp_ent}, respectively. We note that the second order schemes \ote~and \oti~are most diffusive compared to the higher order schemes \othe, \othi, \ofe~and \ofi. Furthermore, both explicit and IMEX schemes of corresponding order have similar entropy decay performance.
\subsubsection{Riemann problem 4}
\label{test:rp4}
In this test, we generalize a Riemann problem described in \cite{dumbser2016new}. The computational domain is $[-0.5,0.5]$, with outflow boundary conditions. The initial profile is given by,
\[(\rho, \bu, \pll, \per, B_{y}, B_{z}) = \begin{cases}
	(1, 0, 0, 0, 1, 1, 1, 0), & \textrm{if } x\leq 0\\
	(0.4, 0, 0, 0, 0.4, 0.4, -1, 0), & \textrm{otherwise}
\end{cases}\]
with $B_{x}=1.3$. We compute the solution using $2000$ cells till the final time $t=0.15$. The numerical results are presented in Figure \eqref{fig:rp4}. Anisotropic density, $\pll$, $\per$ and $B_y$ using explicit schemes are plotted in Figure \eqref{fig:rp4_exp_rho}, \eqref{fig:rp4_exp_pll}, \eqref{fig:rp4_exp_perp} and \eqref{fig:rp4_exp_by}, respectively. We observe that both pressure components have significantly different profiles. Isotropic density, $\pll$, $\perp$ and $B_y$ using explicit schemes are plotted in Figure \eqref{fig:rp4_imp_rho}, \eqref{fig:rp4_imp_pll}, \eqref{fig:rp4_imp_perp} and \eqref{fig:rp4_imp_by}, respectively. We see that all the variables are matching the MHD solution. We observe that the second-order schemes are less accurate when compared with higher-order schemes. We also observe some small-scale oscillations near some of the discontinuities. However, they are stable and do not increase with further mesh refinement.

Total entropy decays at each time step are plotted in Figures \eqref{fig:rp4_exp_ent} and \eqref{fig:rp4_imp_ent} for explicit and IMEX schemes, respectively. We again see that both explicit and IMEX schemes have similar entropy decay performance.

\begin{figure}[!htbp]
	\begin{center}
		\subfloat[Anisotropic case: Density for explicit schemes without source term]{\includegraphics[width=1.5in, height=1.5in]{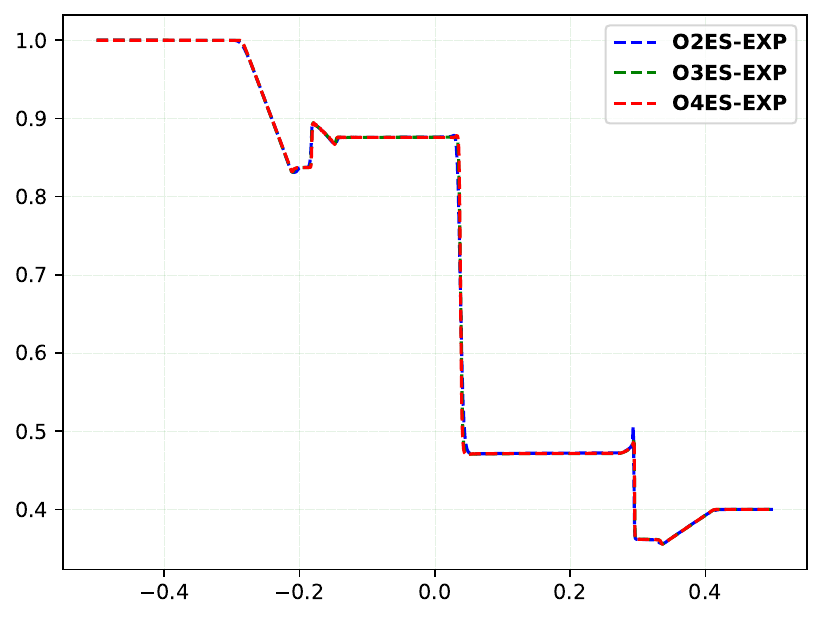} \label{fig:rp4_exp_rho}}~
		\subfloat[Anisotropic case: Pressure component $\pll$ for explicit schemes without source term]{\includegraphics[width=1.5in, height=1.5in]{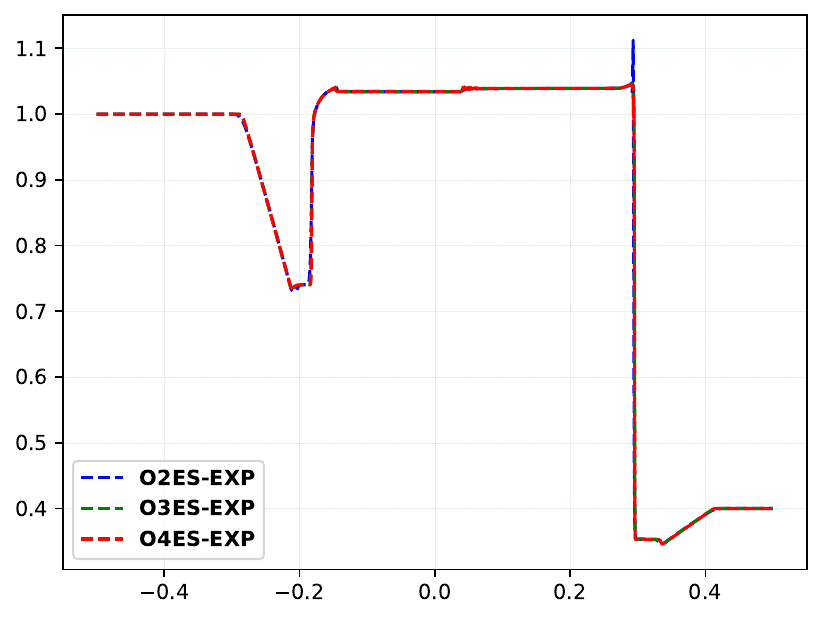}\label{fig:rp4_exp_pll}}~
		\subfloat[Anisotropic case: Pressure component $\per$ for explicit schemes without source term]{\includegraphics[width=1.5in, height=1.5in]{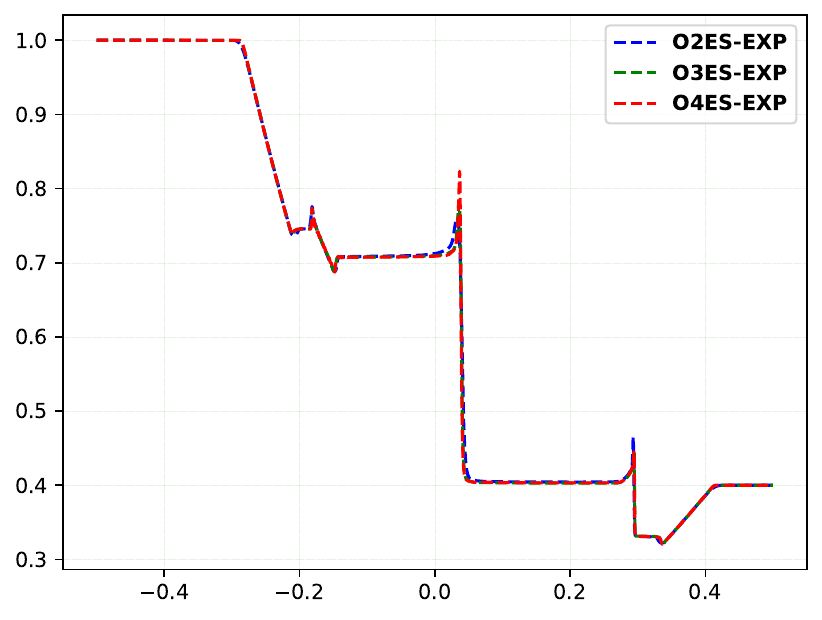}\label{fig:rp4_exp_perp}}\\
		\subfloat[Anisotropic case: Magnetic field  component $B_y$ for explicit schemes without source term]{\includegraphics[width=1.5in, height=1.5in]{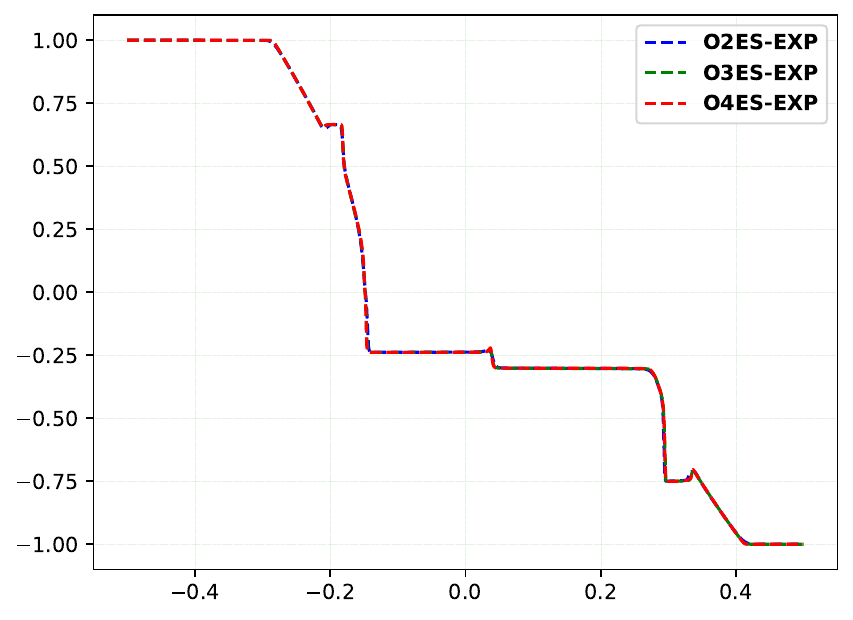}\label{fig:rp4_exp_by}}~
		\subfloat[Isotropic case: Density for IMEX schemes with source term]{\includegraphics[width=1.5in, height=1.5in]{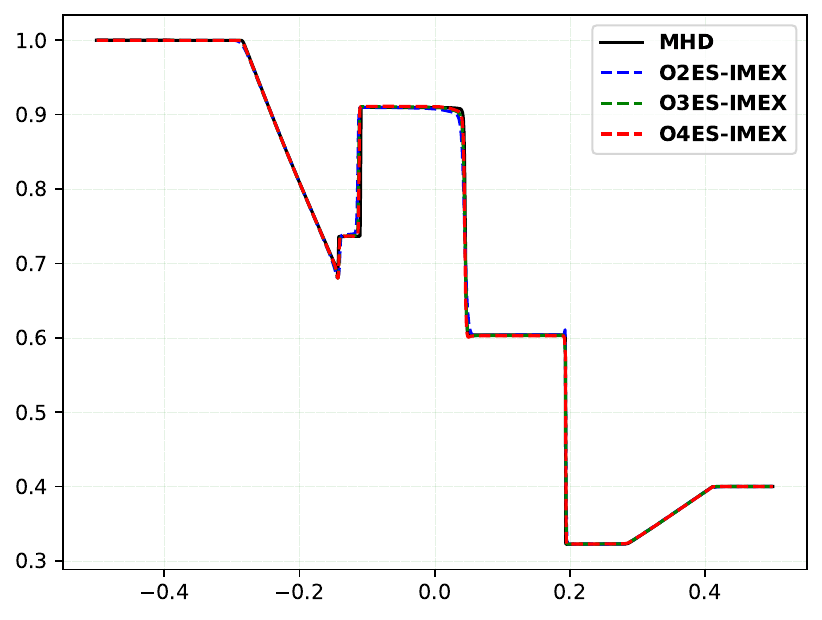}\label{fig:rp4_imp_rho}}~
		\subfloat[Isotropic case: Pressure component $\pll$ for IMEX schemes with source term]{\includegraphics[width=1.5in, height=1.5in]{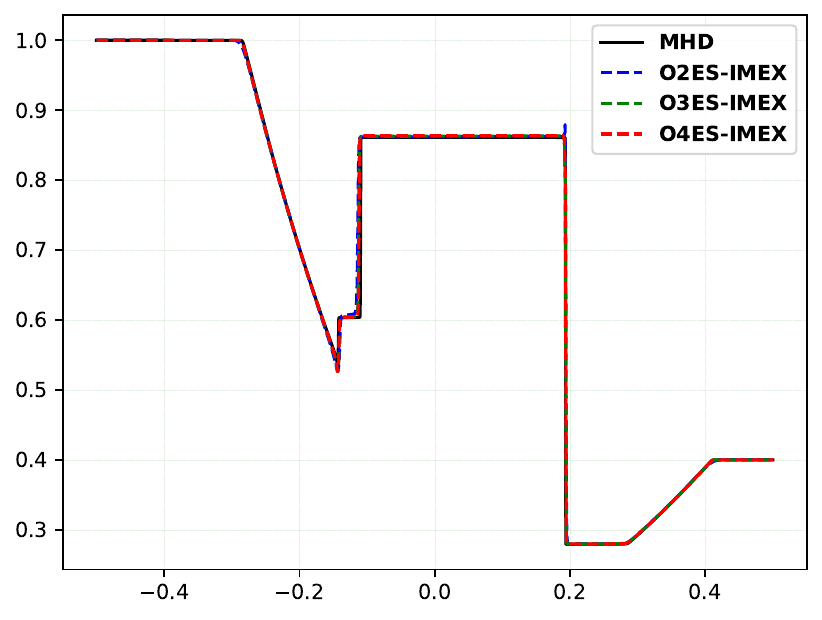}\label{fig:rp4_imp_pll}}\\
		\subfloat[Isotropic case: Pressure component $\per$ for IMEX schemes with source term]{\includegraphics[width=1.5in, height=1.5in]{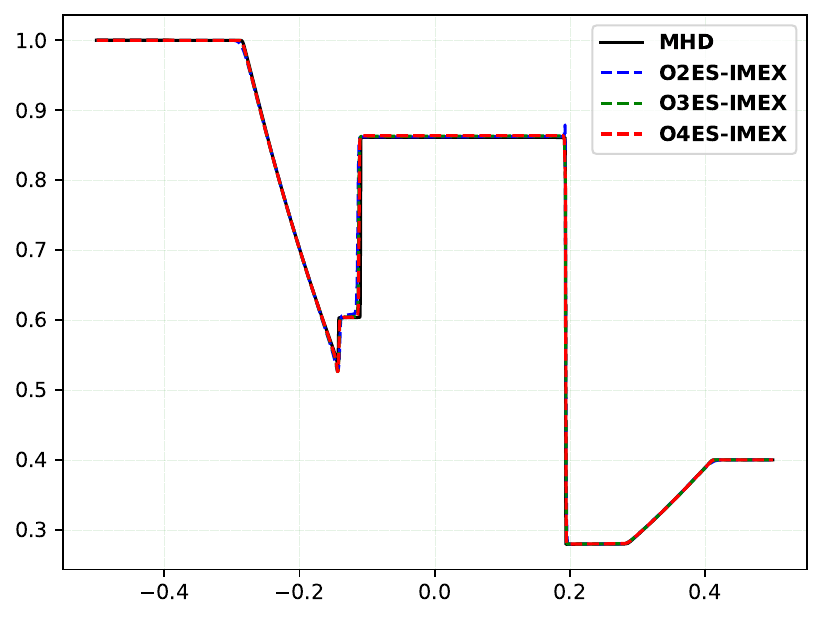}\label{fig:rp4_imp_perp}}~
		\subfloat[Isotropic case: Magnetic field  component $B_y$ for explicit schemes without source term]{\includegraphics[width=1.5in, height=1.5in]{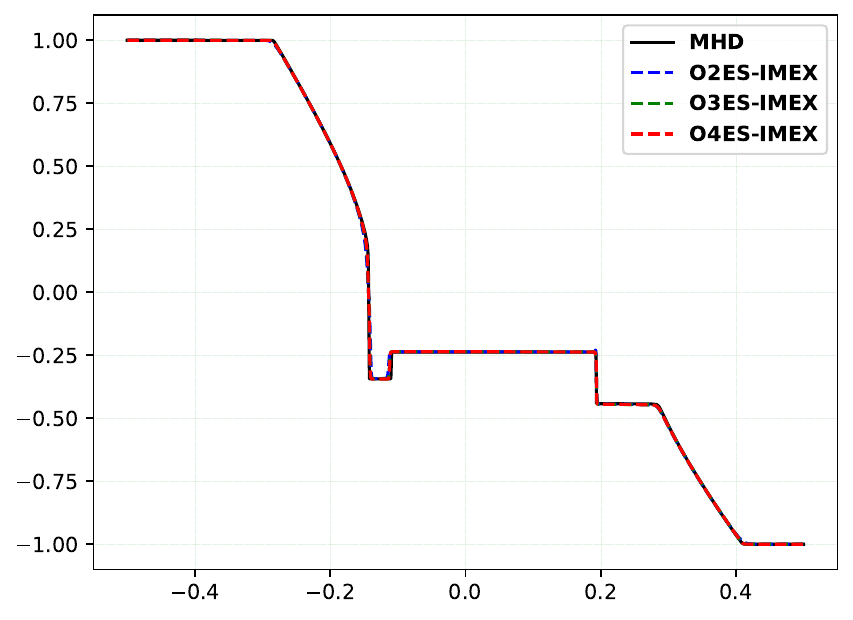}\label{fig:rp4_imp_by}}\\
		\subfloat[Anisotropic case: Total entropy decay at each time step for explicit schemes]{\includegraphics[width=2.0in, height=1.5in]{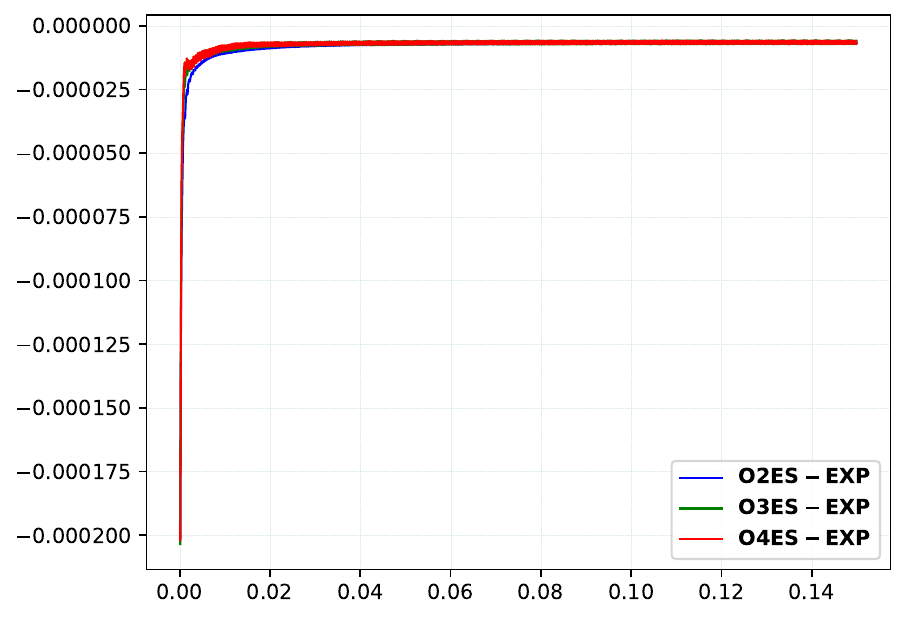}\label{fig:rp4_exp_ent}}~
		\subfloat[Isotropic case: Total entropy decay at each time step for IMEX schemes with source term]{\includegraphics[width=2.0in, height=1.5in]{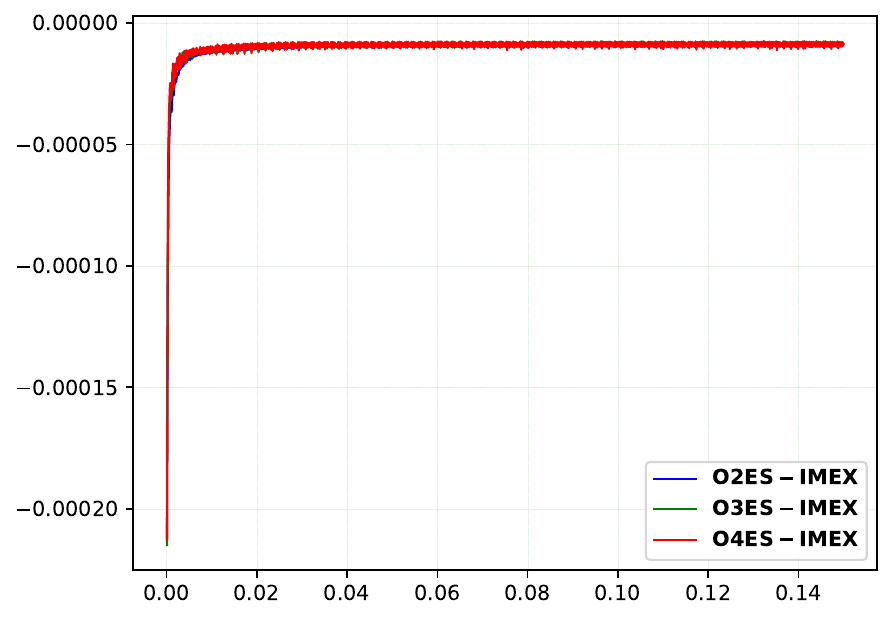}\label{fig:rp4_imp_ent}}
		\caption{\textbf{\nameref{test:rp4}}: Plots of density, parallel and perpendicular pressure components, and total entropy decay at each time step for explicit schemes without source term and IMEX scheme with source term using $2000$ cells at final time $t = 0.15$.}
		\label{fig:rp4}
	\end{center}
\end{figure}

\subsubsection{Riemann problem 5}
\label{test:rp5}
In this test, we consider another Riemann problem from \cite{dumbser2016new}. The initial conditions are given by,
\[(\rho, \bu, \pll, \per, B_{y}, B_{z}) = \begin{cases}
	(1.7, 0, 0, 0, 1.7, 1.7, \frac{3.544908}{\sqrt{4\pi}}, 0),&\textrm{if } x\leq 0\\
	(0.2, 0, 0, -1.496891, 0.2, 0.2, \frac{2.785898}{\sqrt{4\pi}}, \frac{2.192064}{\sqrt{4\pi}}),&\textrm{otherwise}
\end{cases}\]
with $B_{x}=\frac{3.899398}{\sqrt{4\pi}}$. We consider the computational domain $[-0.5,0.5]$ with outflow boundary conditions. We use $2000$ cells and compute the solutions till the final time $t=0.15$.
\begin{figure}[!htbp]
	\begin{center}
		\subfloat[Anisotropic case: Density for explicit schemes without source term]{\includegraphics[width=1.5in, height=1.5in]{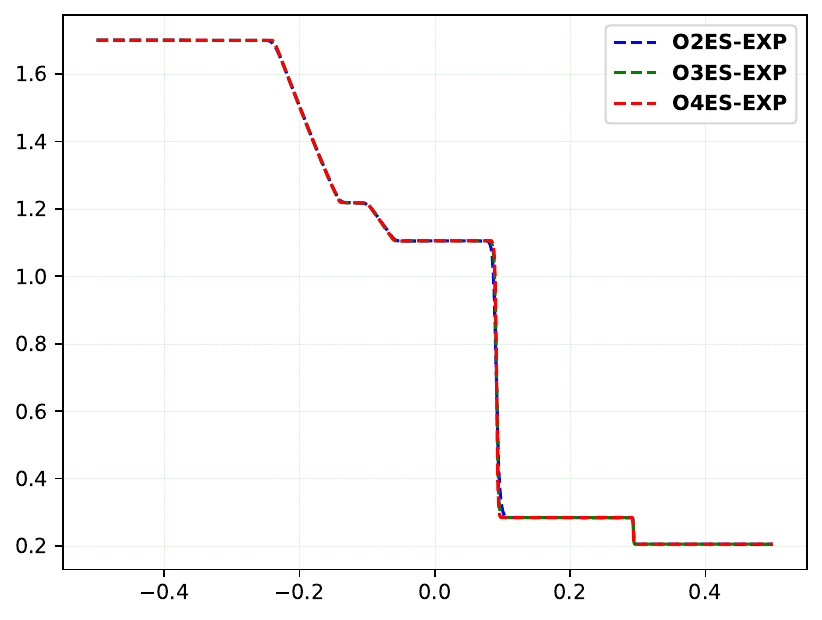} \label{fig:rp5_exp_rho}}~
		\subfloat[Anisotropic case: Pressure component $\pll$ for explicit schemes without source term]{\includegraphics[width=1.5in, height=1.5in]{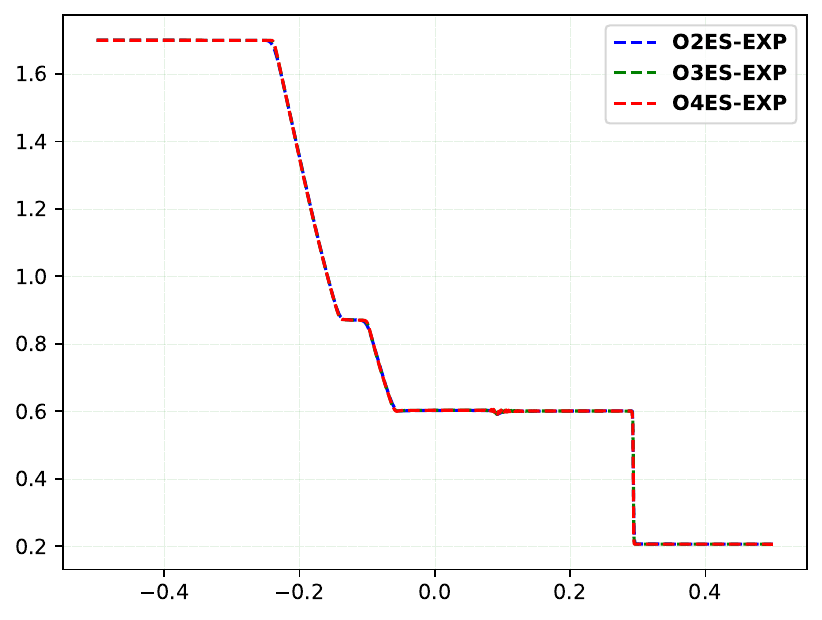}\label{fig:rp5_exp_pll}}~
		\subfloat[Anisotropic case: Pressure component $\per$ for explicit schemes without source term]{\includegraphics[width=1.5in, height=1.5in]{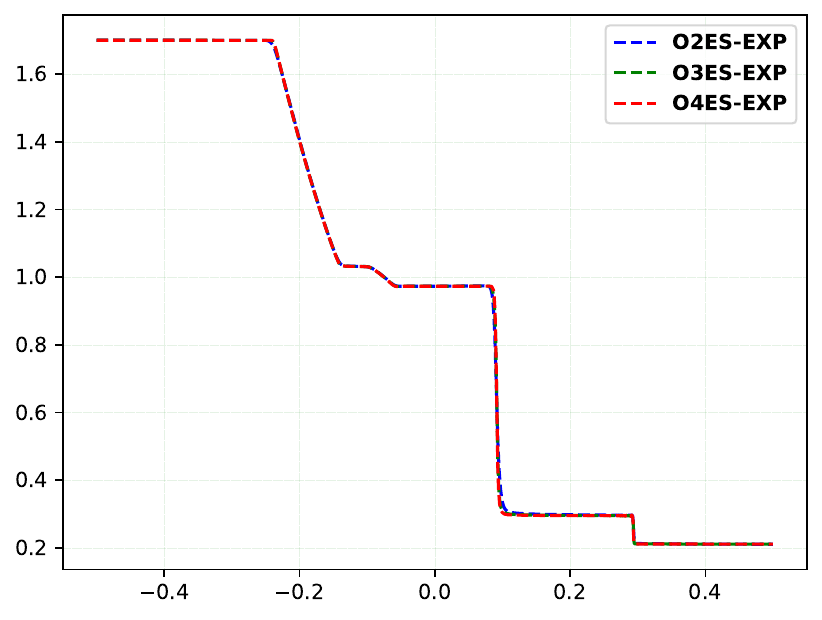}\label{fig:rp5_exp_perp}}\\
		\subfloat[Anisotropic case: Magnetic field  component $B_y$ for explicit schemes without source term]{\includegraphics[width=1.5in, height=1.5in]{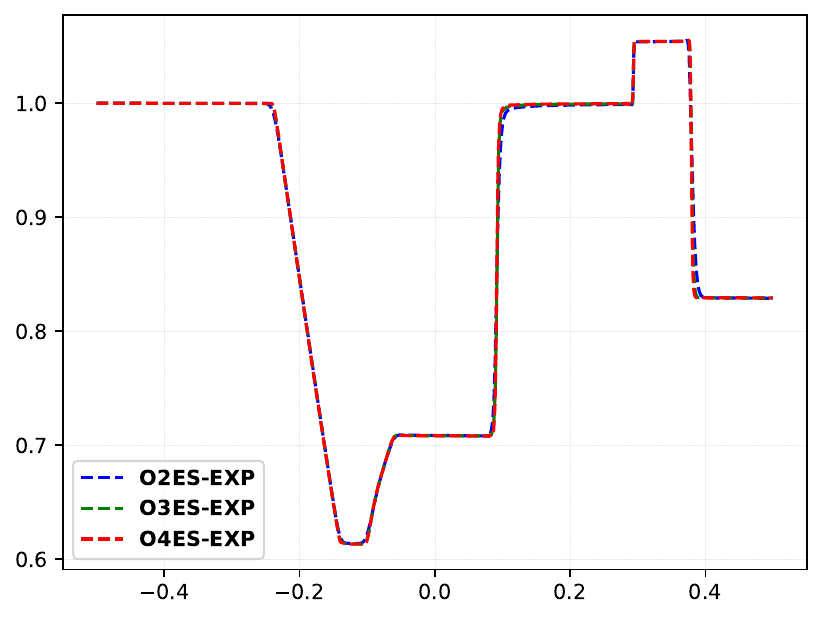}\label{fig:rp5_exp_by}}~
		\subfloat[Isotropic case: Density for IMEX schemes with source term]{\includegraphics[width=1.5in, height=1.5in]{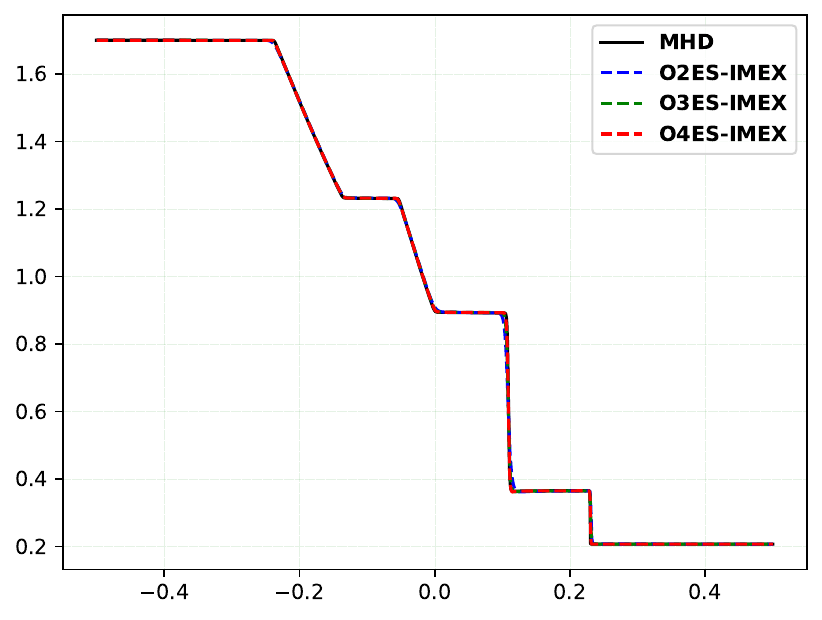}\label{fig:rp5_imp_rho}}~
		\subfloat[Isotropic case: Pressure component $\pll$ for IMEX schemes with source term]{\includegraphics[width=1.5in, height=1.5in]{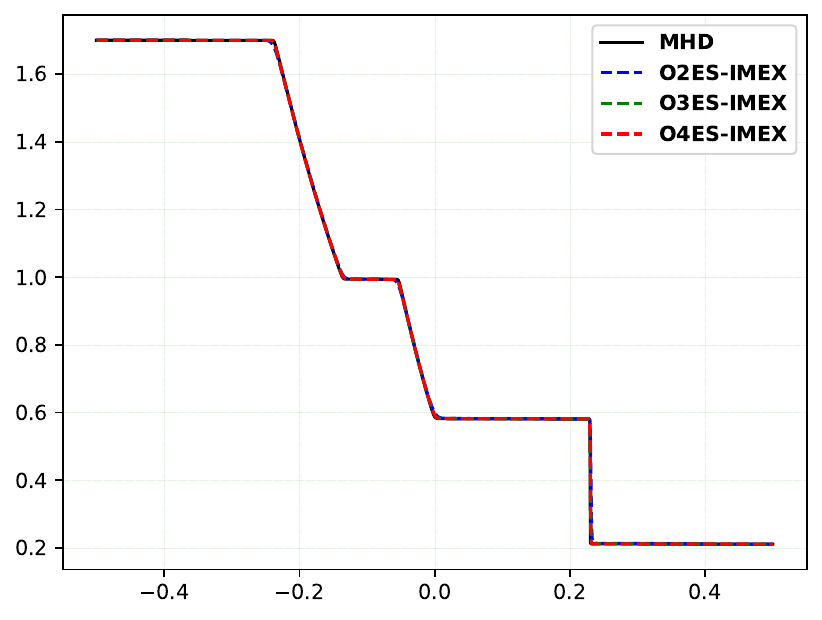}\label{fig:rp5_imp_pll}}\\
		\subfloat[Isotropic case: Pressure component $\per$ for IMEX schemes with source term]{\includegraphics[width=1.5in, height=1.5in]{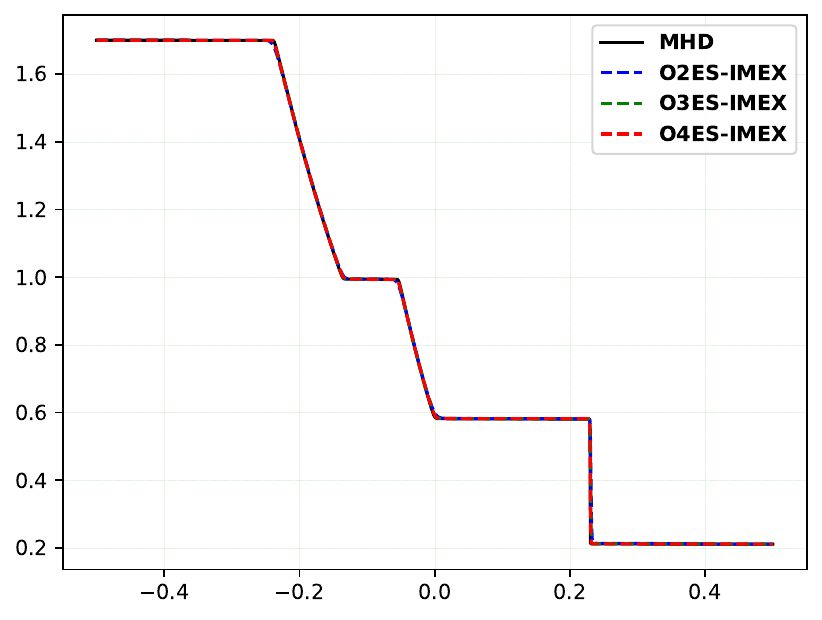}\label{fig:rp5_imp_perp}}~
		\subfloat[Isotropic case: Magnetic field  component $B_y$ for explicit schemes without source term]{\includegraphics[width=1.5in, height=1.5in]{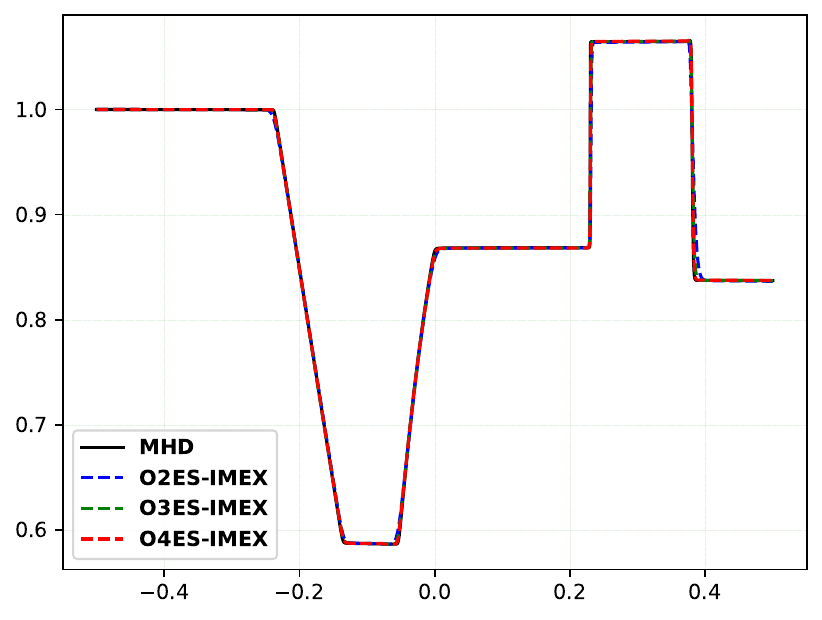}\label{fig:rp5_imp_by}}\\
		\subfloat[Anisotropic case: Total entropy decay at each time step for explicit schemes]{\includegraphics[width=2.0in, height=1.5in]{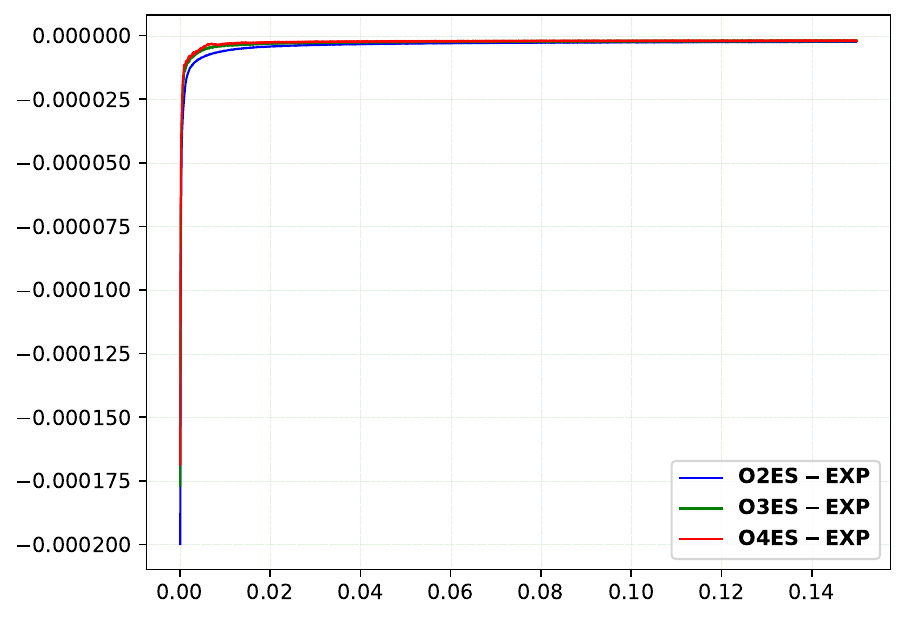}\label{fig:rp5_exp_ent}}~
		\subfloat[Isotropic case: Total entropy decay at each time step for IMEX schemes with source term]{\includegraphics[width=2.0in, height=1.5in]{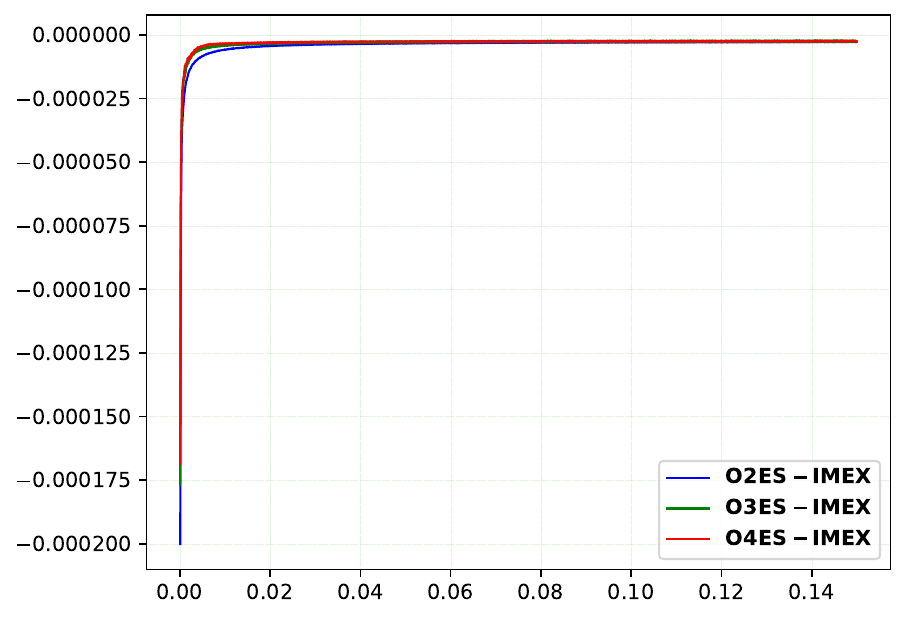}\label{fig:rp5_imp_ent}}
		\caption{\textbf{\nameref{test:rp5}}: Plots of density, parallel and perpendicular pressure components, and total entropy decay at each time step for explicit schemes without source term and IMEX scheme with source term using $2000$ cells at final time $t = 0.15$.}
		\label{fig:rp5}
	\end{center}
\end{figure}
Numerical results are presented in Figure \eqref{fig:rp5} for anisotrpic solutions using IMEX schemes \ote, \othe~and \ofe schemes (see Figures \eqref{fig:rp5_exp_rho}, \eqref{fig:rp5_exp_pll}, \eqref{fig:rp5_exp_perp}, and \eqref{fig:rp5_exp_by} )and for isotropic solutions using \oti, \othi~and \ofi~scheme (see Figures \eqref{fig:rp5_imp_rho}, \eqref{fig:rp5_imp_pll}, \eqref{fig:rp5_imp_perp}, and \eqref{fig:rp5_imp_by}). We have plotted density, pressure components $\pll$ and $\per$ and magnetic filed component $B_y$. We observe that all the schemes are able to resolve different waves. Furthermore, isotropic solutions match MHD solutions for all the variables. From the entropy decay in Figures \eqref{fig:rp5_exp_ent} and \eqref{fig:rp5_imp_ent}, we observe that the second-order schemes are more diffusive than the third and fourth-order schemes.

\subsubsection{Riemann problem 6}
\label{test:rp6}
In this test, we consider the Riemann problem presented in  \cite{Balsara2018efficient} for MHD equations. The computational domain is considered to be $[-0.5,0.5]$ with outflow boundary conditions. The initial conditions are,
\[(\rho, \bu, \pll, \per, B_{y}, B_{z}) = \begin{cases}
	(\frac{1}{4\pi}, -1, 1, -1, 1, 1, -\frac{1}{\sqrt{4\pi}}, \frac{1}{\sqrt{4\pi}}), & \textrm{if } x\leq 0\\
	(\frac{1}{4\pi}, -1, -1, -1, 1, 1, \frac{1}{\sqrt{4\pi}}, \frac{1}{\sqrt{4\pi}}), & \textrm{otherwise}
\end{cases}\]
with magnetic field component $B_{x}=\frac{1}{\sqrt{4\pi}}$. We compute anisotropic and isotropic solutions using explicit (\ote, \othe~and \ofe) and IMEX (\oti, \othi~and \ofi) schemes, respectively with $2000$ cells. The numerical results for variable $\pll$, $\per$ and $B_y$ are presented in Figure \eqref{fig:rp6}. We observe that the wave structures are much more dominant in the anisotropic case than in the isotropic case. Furthermore, in the case of anisotropic solutions, we clearly observe that \ofe~is most accurate, followed by \othe~and \ote~schemes (see Figures  \eqref{fig:rp6_exp_pll},  \eqref{fig:rp6_exp_perp} and  \eqref{fig:rp6_exp_by} ). Furthermore, the isotropic solutions matched the MHD solution for IMEX schemes see Figures  \eqref{fig:rp6_imp_pll},  \eqref{fig:rp6_imp_perp} and  \eqref{fig:rp6_imp_by}. We also note that entropy decays are more oscillatory (see Figures \eqref{fig:rp6_exp_ent} and \eqref{fig:rp6_imp_ent}).

\begin{figure}[!htbp]
	\begin{center}
		\subfloat[Anisotropic case: Pressure component $\pll$ for explicit schemes without source term]{\includegraphics[width=1.5in, height=1.5in]{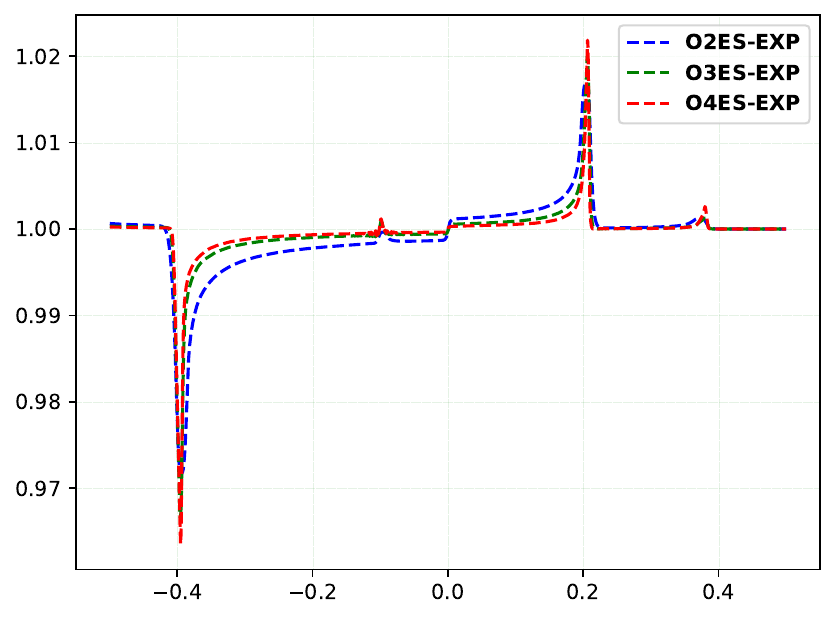}\label{fig:rp6_exp_pll}}~
		\subfloat[Anisotropic case: Pressure component $\per$ for explicit schemes without source term]{\includegraphics[width=1.5in, height=1.5in]{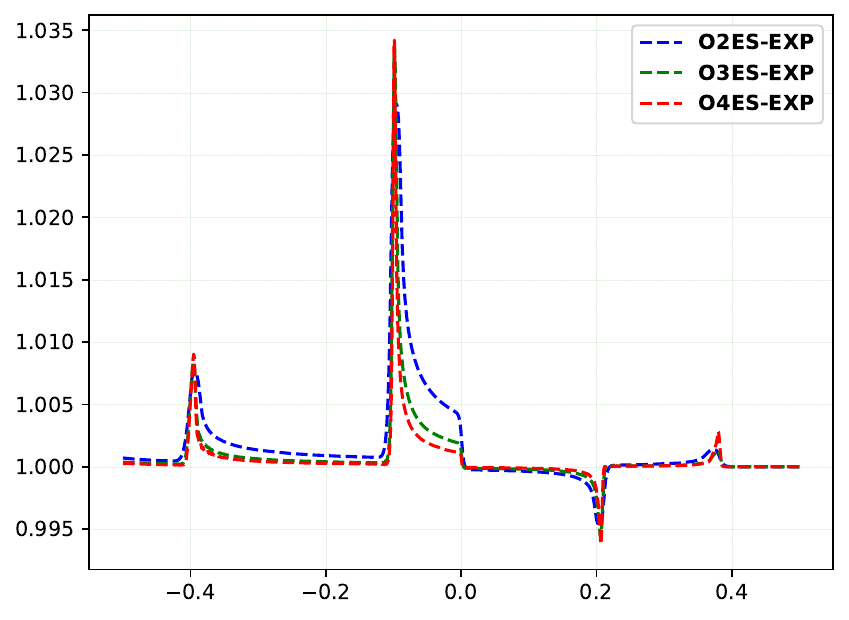}\label{fig:rp6_exp_perp}}~
		\subfloat[Anisotropic case: Magnetic field  component $B_y$ for explicit schemes without source term]{\includegraphics[width=1.5in, height=1.5in]{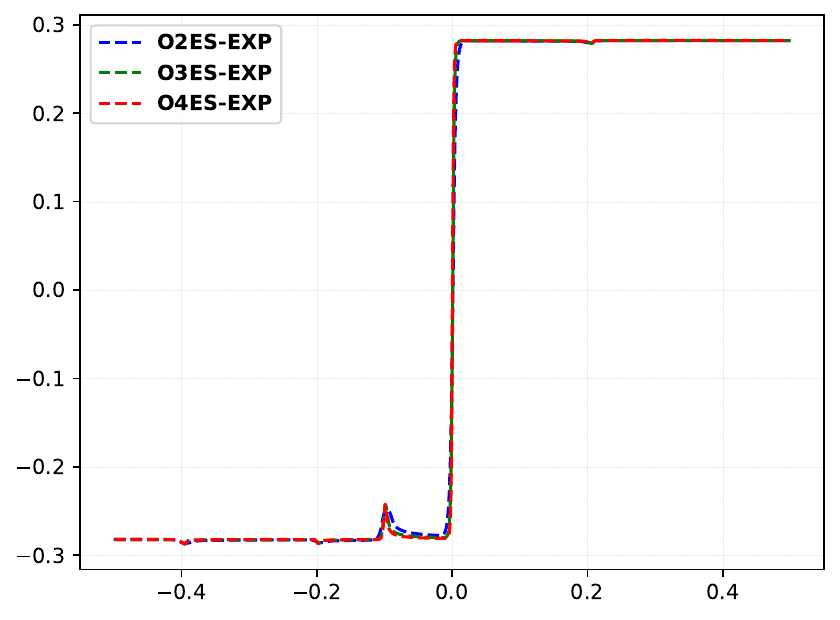}\label{fig:rp6_exp_by}}\\
		\subfloat[Isotropic case: Pressure component $\pll$ for IMEX schemes with source term]{\includegraphics[width=1.5in, height=1.5in]{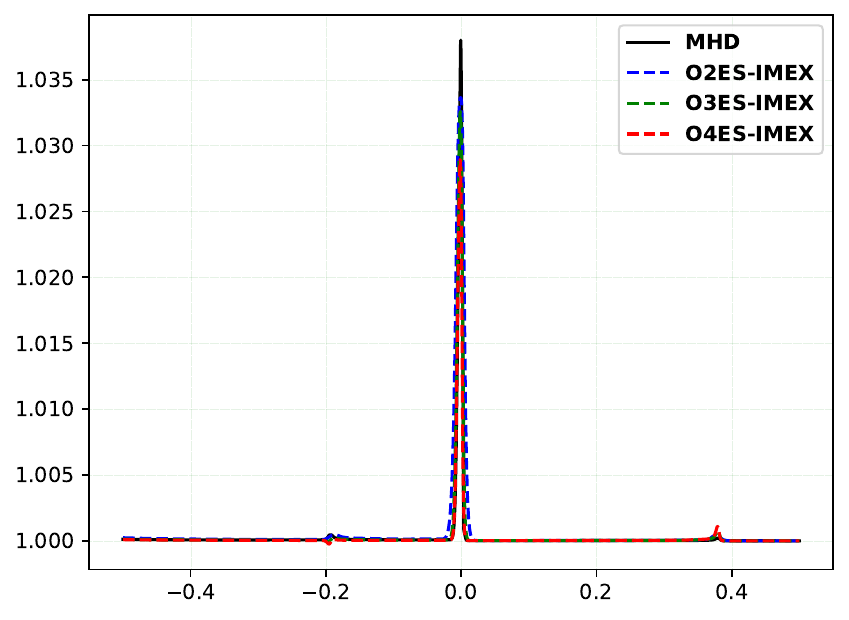}\label{fig:rp6_imp_pll}}~
		\subfloat[Isotropic case: Pressure component $\per$ for IMEX schemes with source term]{\includegraphics[width=1.5in, height=1.5in]{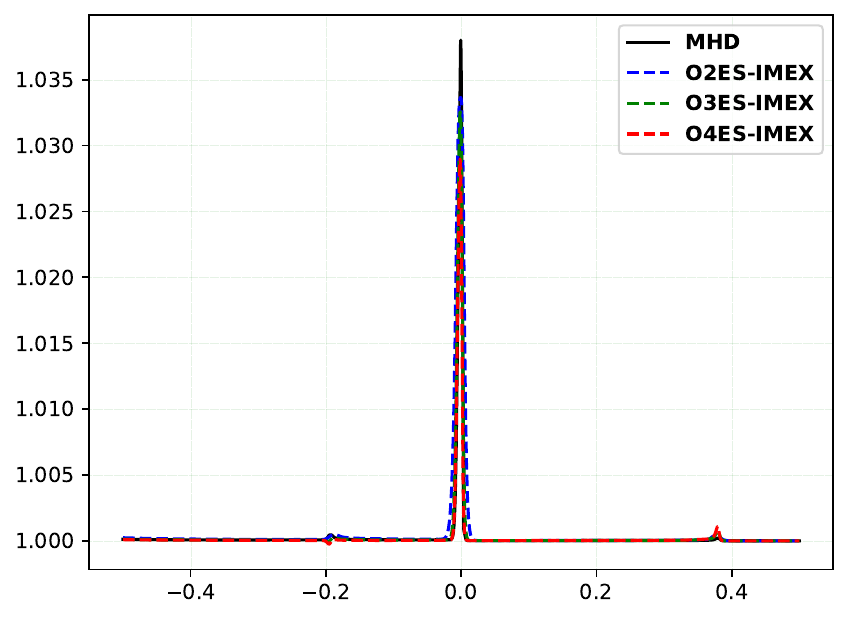}\label{fig:rp6_imp_perp}}~
		\subfloat[Isotropic case: Magnetic field  component $B_y$ for explicit schemes without source term]{\includegraphics[width=1.5in, height=1.5in]{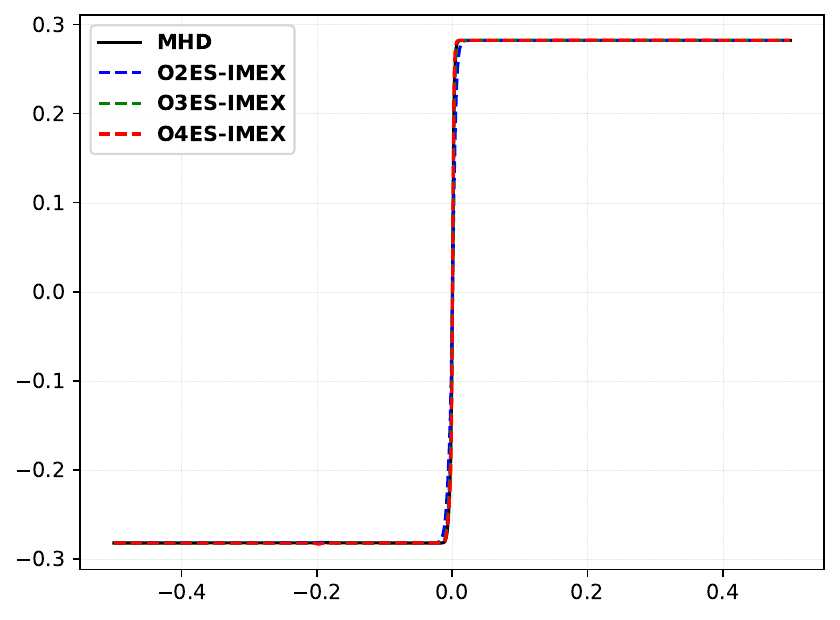}\label{fig:rp6_imp_by}}\\
		\subfloat[Anisotropic case: Total entropy decay at each time step for explicit schemes]{\includegraphics[width=2.0in, height=1.5in]{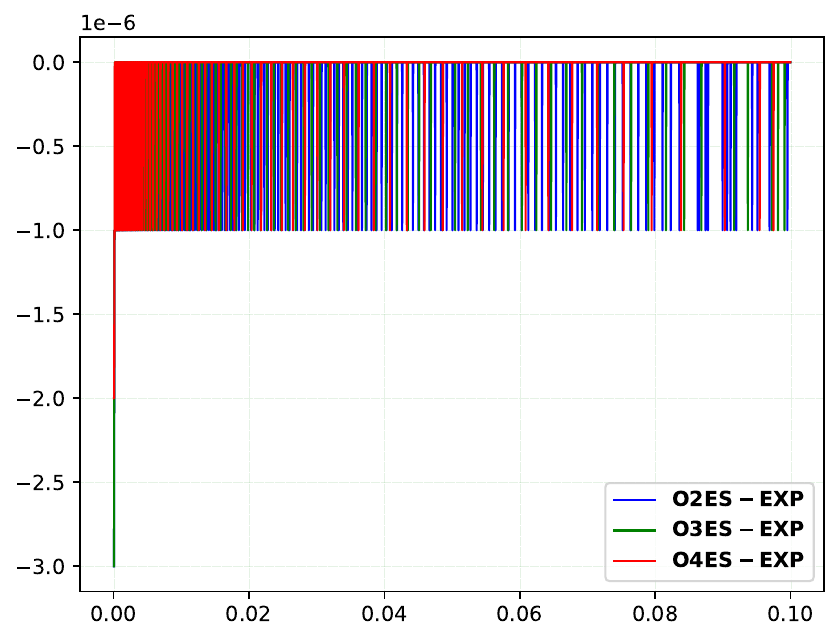}\label{fig:rp6_exp_ent}}~
		\subfloat[Isotropic case: Total entropy decay at each time step for IMEX schemes with source term]{\includegraphics[width=2.0in, height=1.5in]{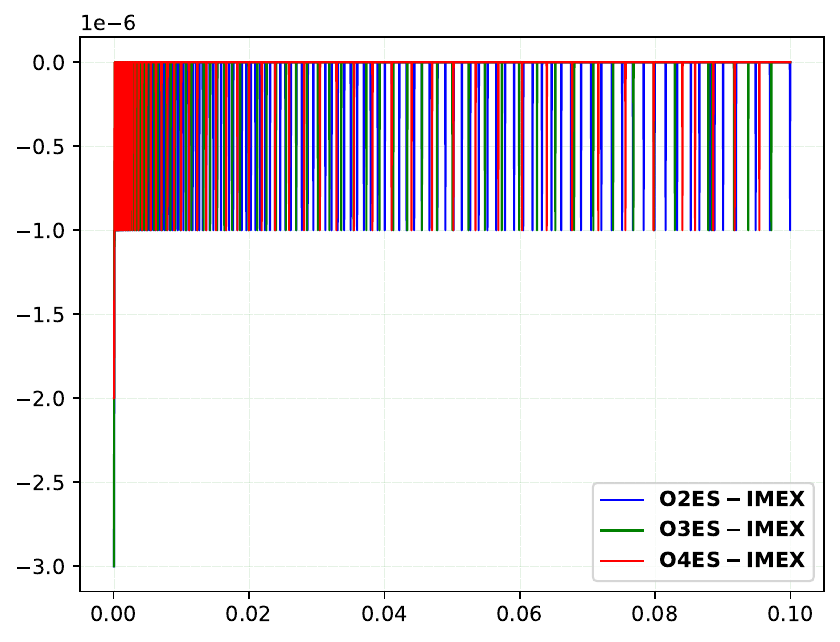}\label{fig:rp6_imp_ent}}
		\caption{\textbf{\nameref{test:rp6}}: Plots of density, parallel and perpendicular pressure components, and total entropy decay at each time step for explicit schemes without source term and IMEX scheme with source term using $2000$ cells at final time $t = 0.15$.}
		\label{fig:rp6}
	\end{center}
\end{figure}

\subsubsection{Riemann problem 7}
\label{test:rp7}
The last one-dimensional test case for CGL system is designed using MHD Riemann problem from \cite{Balsara2018efficient}. The initial conditions are given by,
\[(\rho, \bu, \pll, \per, B_{y}, B_{z}) = \begin{cases}
	(1, 0, 0, 0, 1, 1, 1, 0), & \textrm{if } x\leq 0\\
	(0.2, 0, 0, 0, 0.1, 0.1, 0, 0), & \textrm{otherwise},
\end{cases}\]
with $B_{x}=1$. We take the computational domain of $[-0.5,0.5]$ with outflow boundary conditions. We compute the solution till the final time $t=0.15$. The numerical solutions using explicit and IMEX schemes for anisotropic and isotropic cases are presented in Figure \eqref{fig:rp7} using $2000$ cells. We again observe that all the schemes are able to resolve wave structures for both anisotropic and isotropic cases. We also observed a significant change in the wave structure when comparing anisotropic and isotropic cases. For the isotropic case, the solutions match with the MHD solution.  For the entropy decay, we observe that explicit and IMEX schemes have similar decays.

\begin{figure}[!htbp]
	\begin{center}
		\subfloat[Anisotropic case: Density for explicit schemes without source term]{\includegraphics[width=1.5in, height=1.5in]{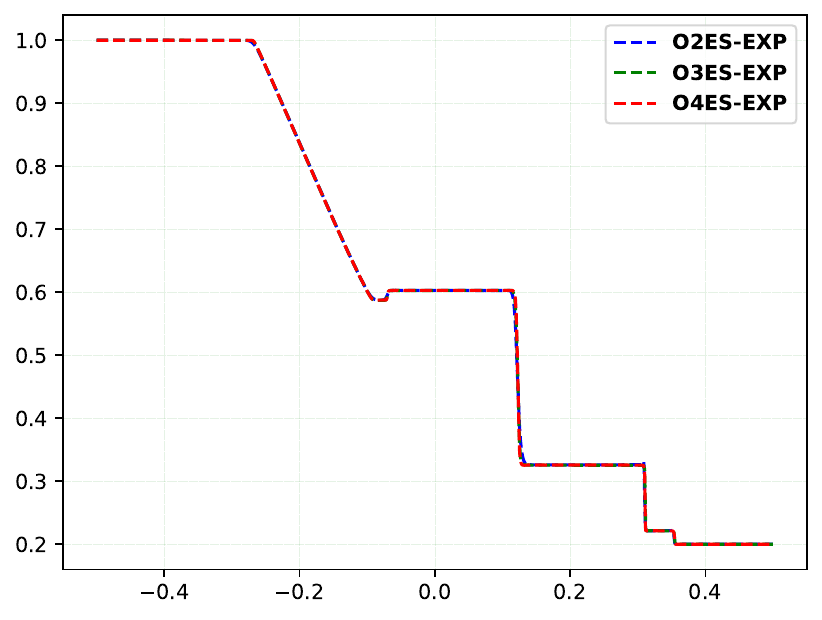} \label{fig:rp7_exp_rho}}~
		\subfloat[Anisotropic case: Pressure component $\pll$ for explicit schemes without source term]{\includegraphics[width=1.5in, height=1.5in]{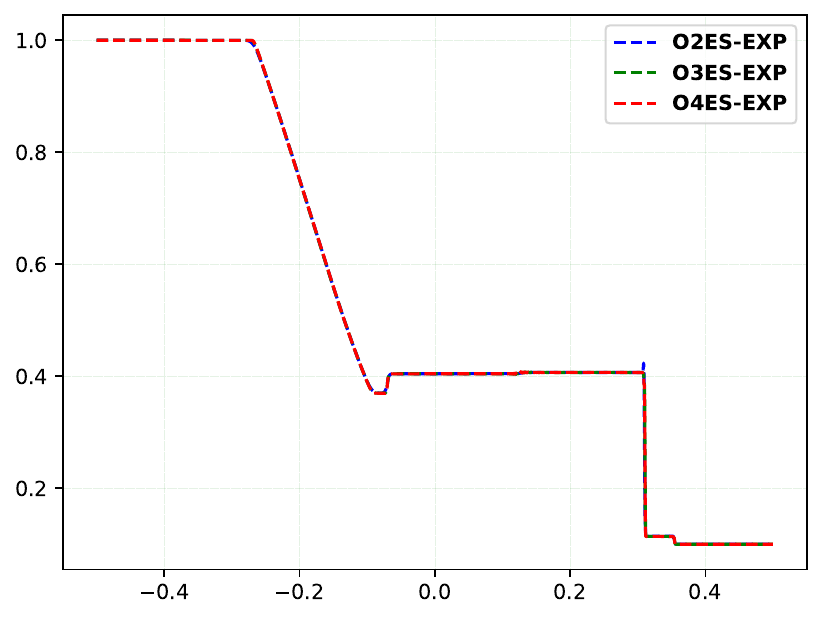}\label{fig:rp7_exp_pll}}~
		\subfloat[Anisotropic case: Pressure component $\per$ for explicit schemes without source term]{\includegraphics[width=1.5in, height=1.5in]{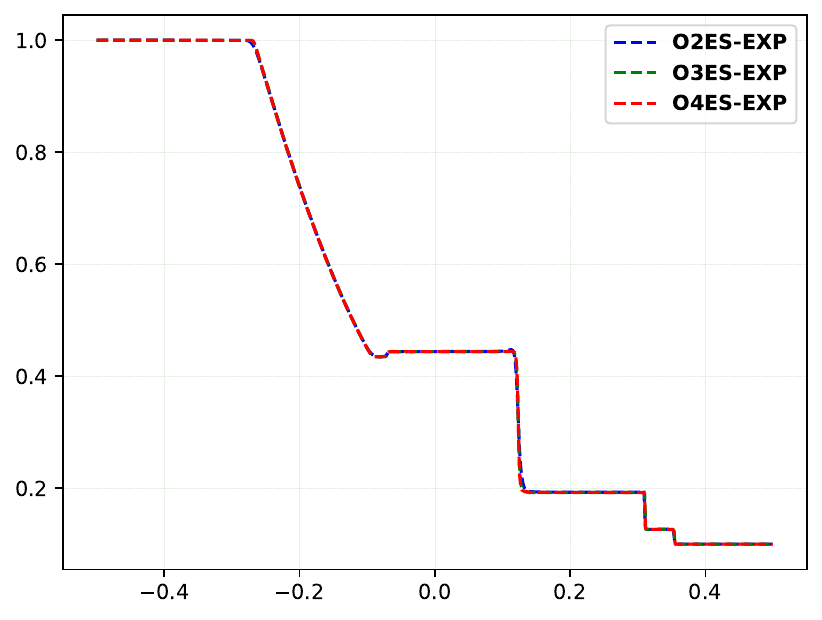}\label{fig:rp7_exp_perp}}\\
		\subfloat[Anisotropic case: Magnetic field  component $B_y$ for explicit schemes without source term]{\includegraphics[width=1.5in, height=1.5in]{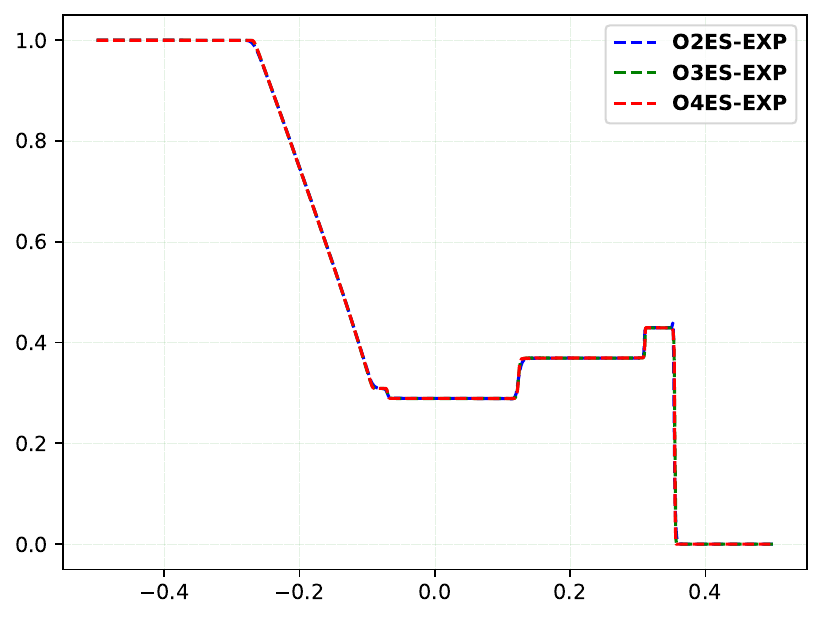}\label{fig:rp7_exp_by}}~
		\subfloat[Isotropic case: Density for IMEX schemes with source term]{\includegraphics[width=1.5in, height=1.5in]{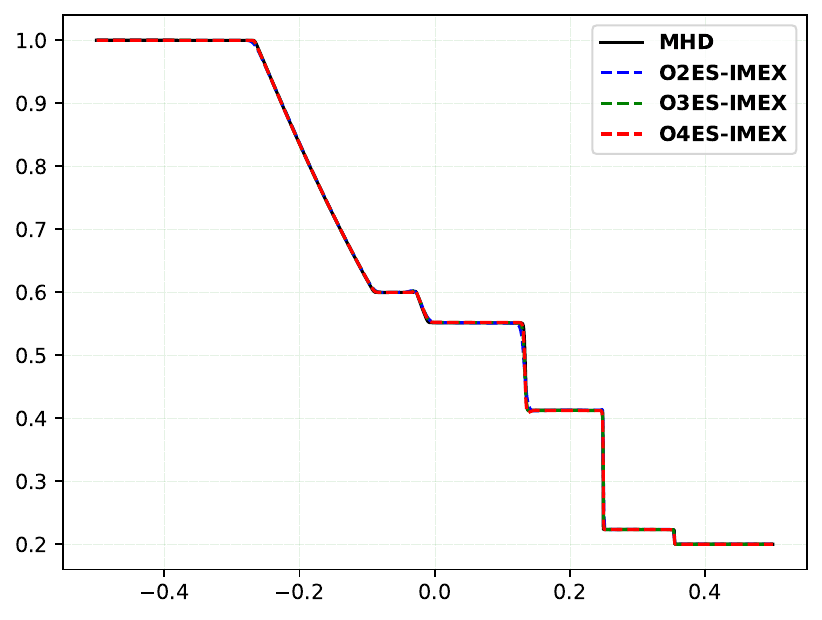}\label{fig:rp7_imp_rho}}~
		\subfloat[Isotropic case: Pressure component $\pll$ for IMEX schemes with source term]{\includegraphics[width=1.5in, height=1.5in]{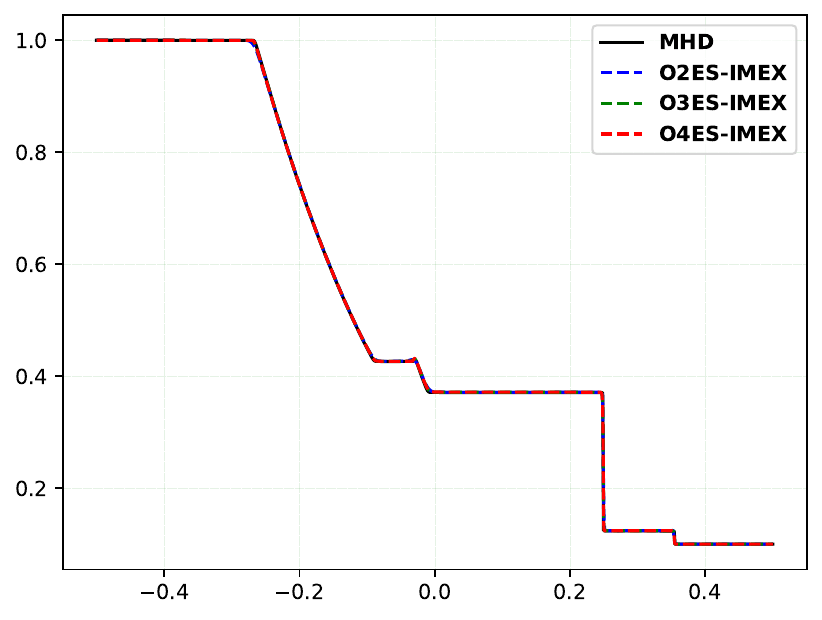}\label{fig:rp7_imp_pll}}\\
		\subfloat[Isotropic case: Pressure component $\per$ for IMEX schemes with source term]{\includegraphics[width=1.5in, height=1.5in]{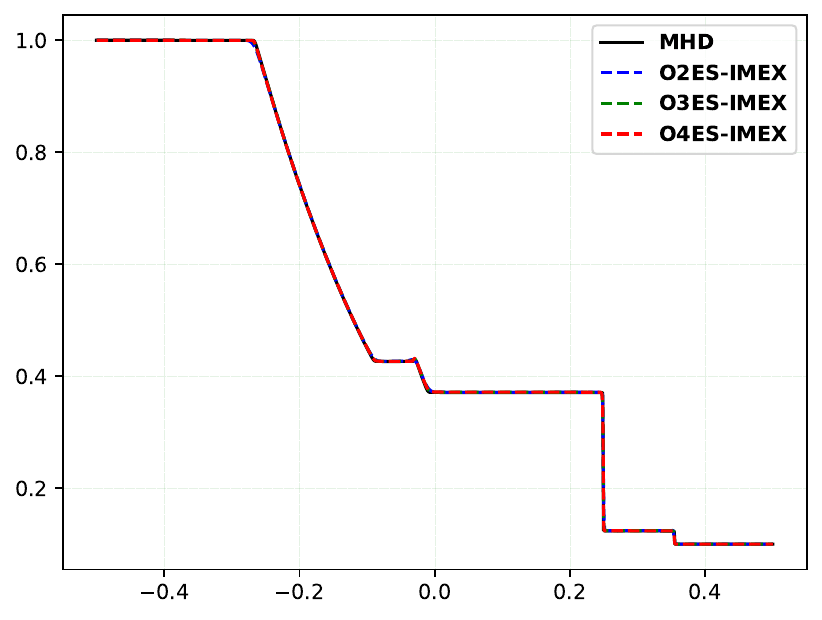}\label{fig:rp7_imp_perp}}~
		\subfloat[Isotropic case: Magnetic field  component $B_y$ for explicit schemes without source term]{\includegraphics[width=1.5in, height=1.5in]{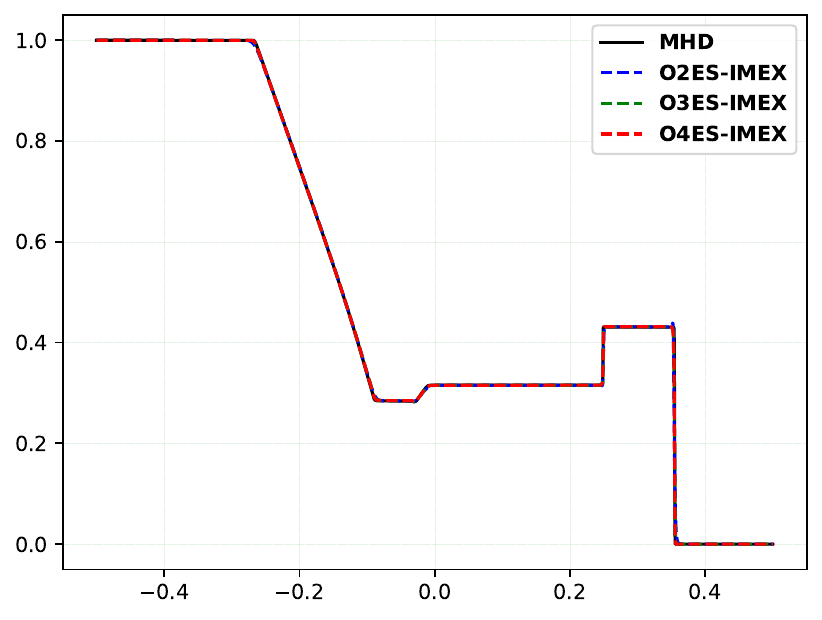}\label{fig:rp7_imp_by}}\\
		\subfloat[Anisotropic case: Total entropy decay at each time step for explicit schemes]{\includegraphics[width=2.0in, height=1.5in]{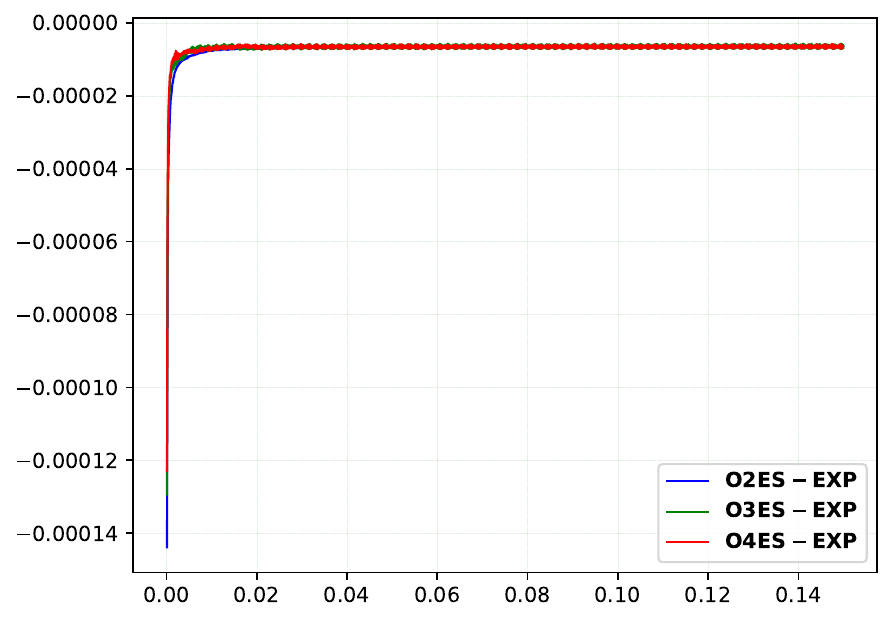}\label{fig:rp7_exp_ent}}~
		\subfloat[Isotropic case: Total entropy decay at each time step for IMEX schemes with source term]{\includegraphics[width=2.0in, height=1.5in]{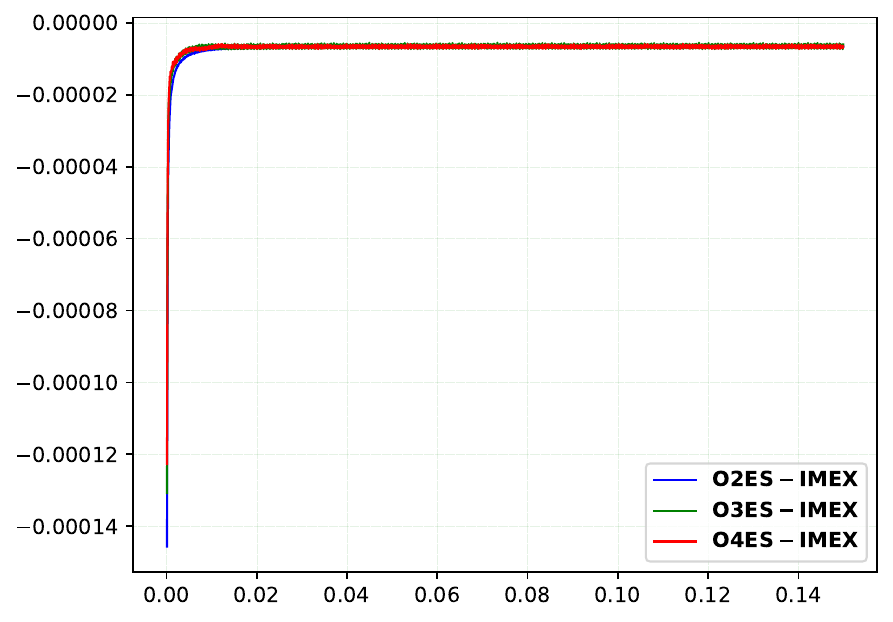}\label{fig:rp7_imp_ent}}
		\caption{\textbf{\nameref{test:rp7}}: Plots of density, parallel and perpendicular pressure components, and total entropy decay at each time step for explicit schemes without source term and IMEX scheme with source term using $2000$ cells at final time $t = 0.15$.}
		\label{fig:rp7}
	\end{center}
\end{figure}
\subsection{Two-dimensional test problems} 
\subsubsection{Orszag–Tang vortex}
\label{test:ot}
To test the stability of the proposed schemes in two dimensions, we generalize the Orzag-Tang vortex test case of MHD equations, which was first considered in \cite{orszag1979small}. Following \cite{toth2000b,chandrashekar2016entropy}, we consider the computational domain $[0, 1] \times [0, 1]$ with periodic boundary conditions. The initial conditions are specified as follows: 
\begin{align*}
	\rho = \frac{25}{36\pi},~~~~\bu &= (-\sin{(2\pi y)}, \sin{(2\pi x)}, 0),~~~~\per = \pll = \frac{5}{12\pi},\\
	\B &= \frac{1}{\sqrt{4\pi}}(-\sin{(2\pi y)}, \sin{(4\pi x)}, 0).
\end{align*}
We present the solutions for the isotropic case using IMEX schemes only, as we do not have any comparable results for the anisotropic case.  The numerical results for \oti, \othi~and \ofi~schemes at $200\times 200$, $400 \times 400$ and $800 \times 800$ cells are presented in Figure \eqref{fig:ot}. We observe that all the schemes are able to resolve the complicated flow features. To compare the solution with MHD solution, we have plotted the cut of variables $\pll$ and $\per$ for $200 \times 200$ cells and compared the solutions with MHD pressure (see Figures \eqref{fig:ot_cut_pll} and \eqref{fig:ot_cut_perp} ). We observe that all the schemes produce solutions similar to the MHD solution, with \ofi~being the most accurate and \oti~being the most diffusive.

The solution is initially smooth and, over time, develops discontinuities. Hence, low entropy decay should be expected initially, and as the discontinuities form, entropy decay should increase. We observe the same behaviour of the schemes in Figure \eqref{fig:ot_entropy}, \eqref{fig:ot_entropy_400} and \eqref{fig:ot_entropy_800}. We also note that the second-order scheme has the most entropy decay.

\begin{figure}[!htbp]
	\begin{center}
		\subfloat[Density using $200 \times 200$ cells for \oti scheme]{\includegraphics[width=1.5in, height=1.5in]{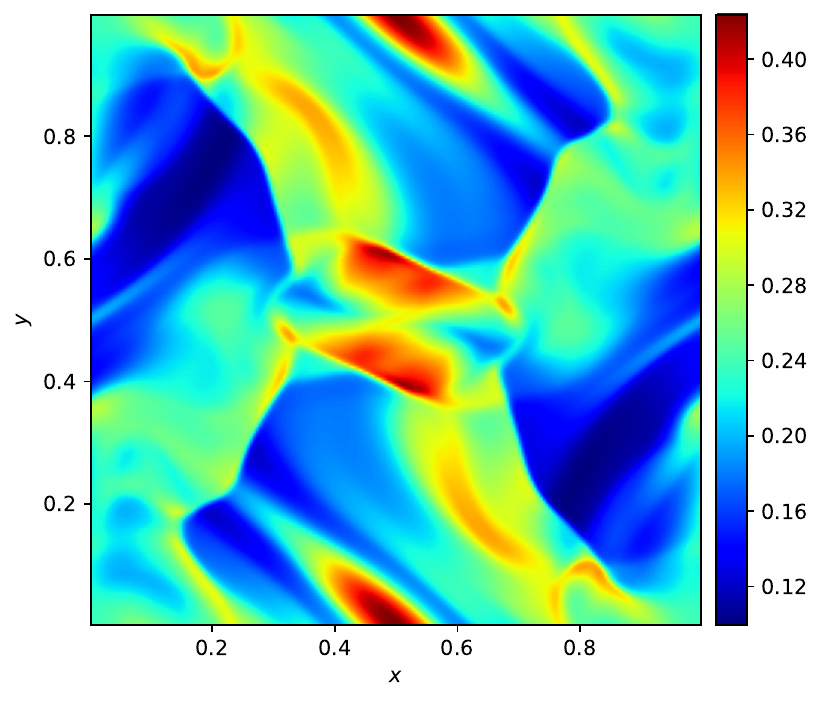}\label{fig:ot_o2i_200}}~
		\subfloat[Density using $400 \times 400$ cells for \oti scheme]{\includegraphics[width=1.5in, height=1.5in]{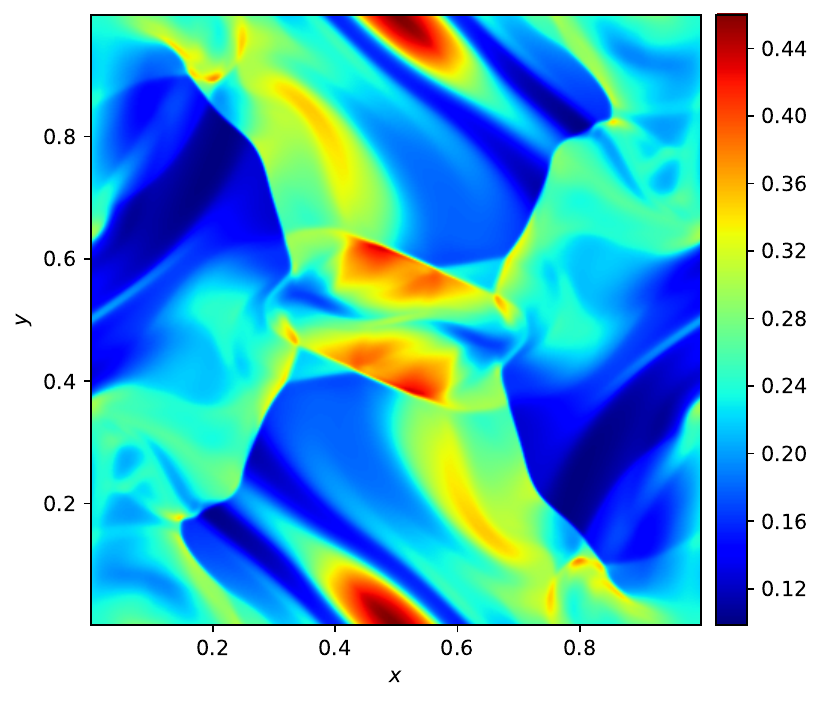}\label{fig:ot_o2i_400}}~
		\subfloat[Density using $800 \times 800$ cells for \oti scheme]{\includegraphics[width=1.5in, height=1.5in]{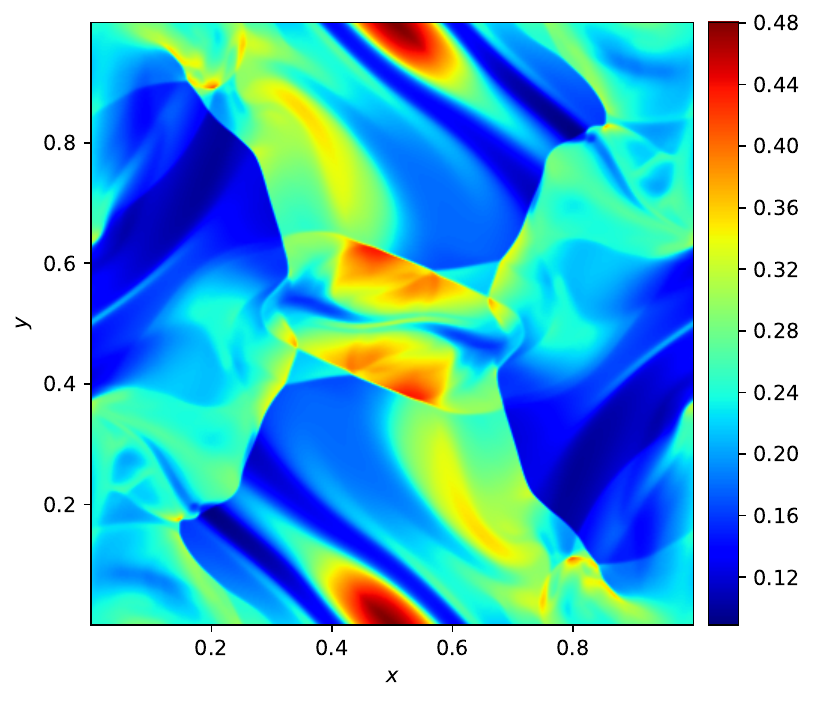}\label{fig:ot_o2i_800}}\\
		\subfloat[Density using $200 \times 200$ cells for \othi scheme]{\includegraphics[width=1.5in, height=1.5in]{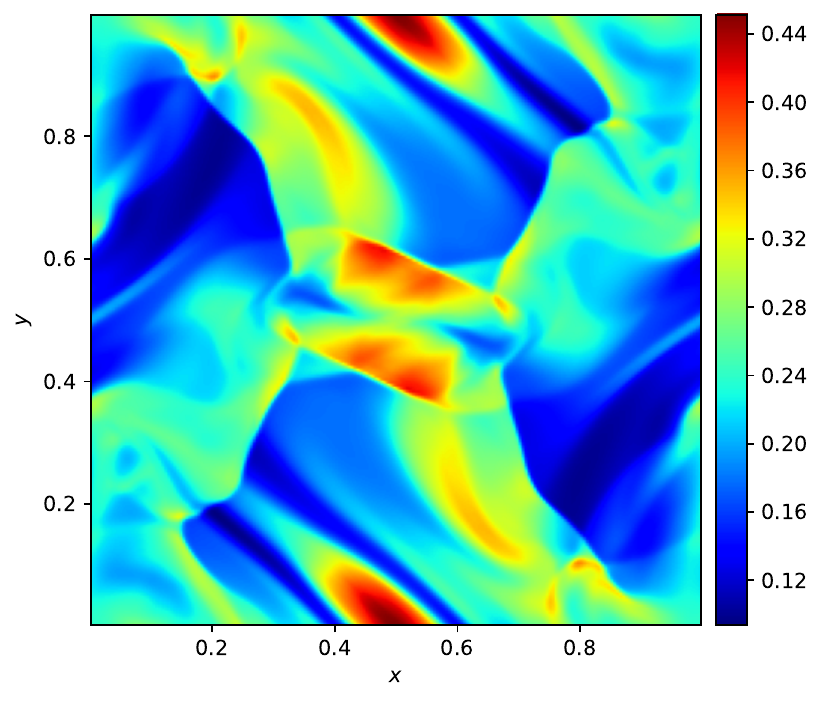}\label{fig:ot_o3i_200}}~
		\subfloat[Density using $400 \times 400$ cells for \othi scheme]{\includegraphics[width=1.5in, height=1.5in]{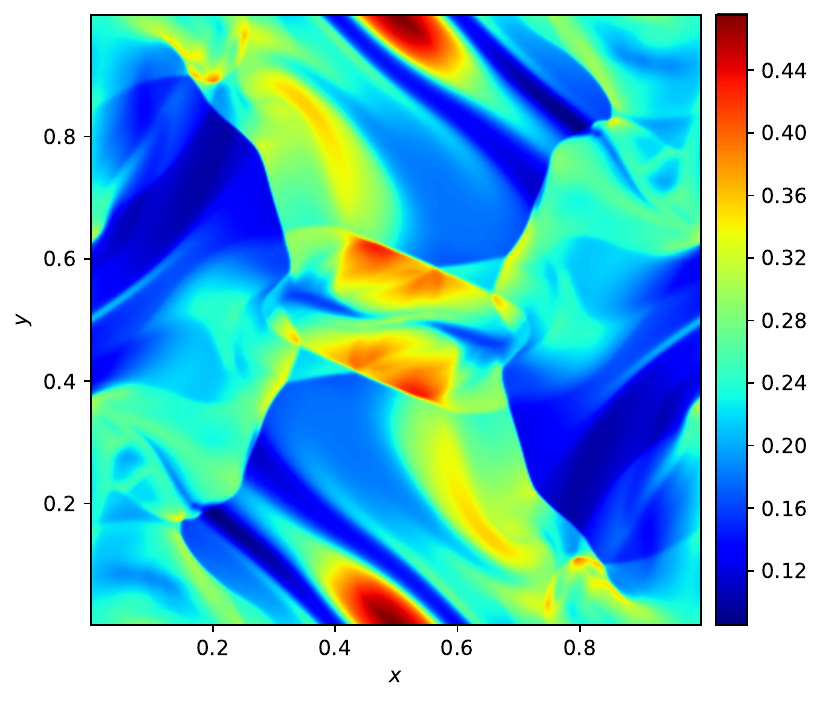}\label{fig:ot_o3i_400}}~
		\subfloat[Density using $800 \times 800$ cells for \othi scheme]{\includegraphics[width=1.5in, height=1.5in]{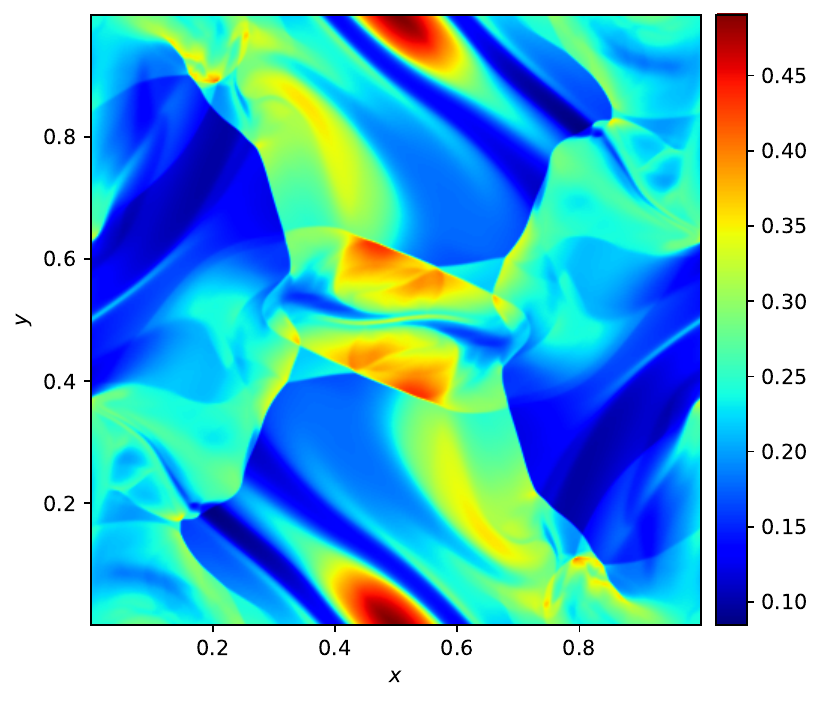}\label{fig:ot_o3i_800}}\\
		\subfloat[Density using $200 \times 200$ cells for \ofi scheme]{\includegraphics[width=1.5in, height=1.5in]{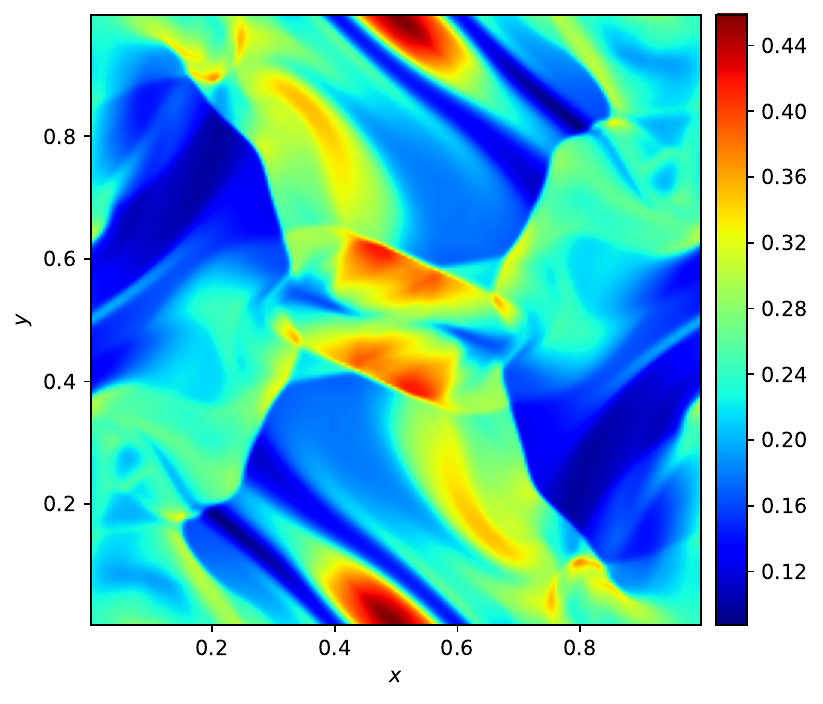}\label{fig:ot_o4i_200}}~
		\subfloat[Density using $400 \times 400$ cells for \ofi 4cheme]{\includegraphics[width=1.5in, height=1.5in]{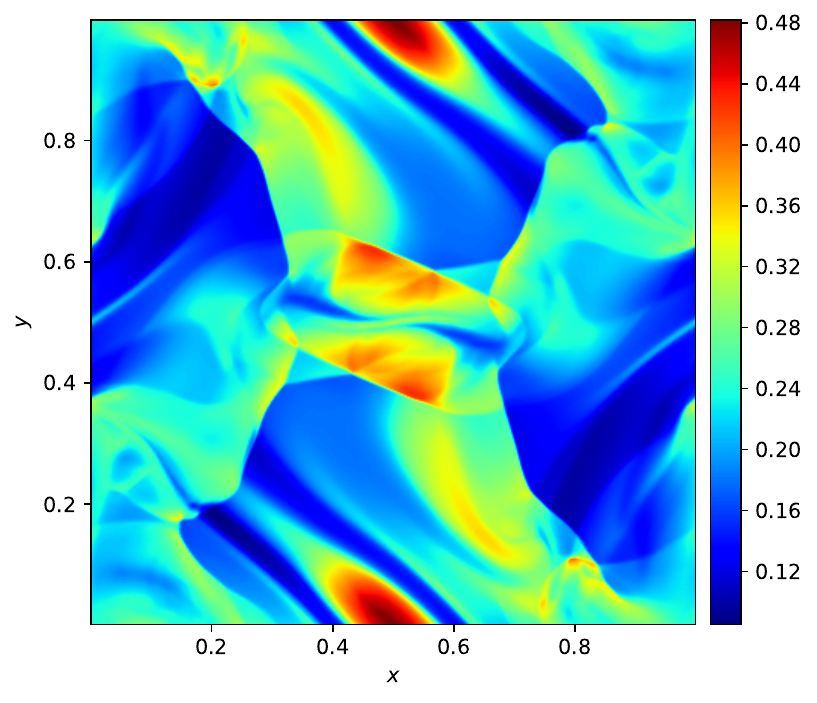}\label{fig:ot_o4i_400}}~
		\subfloat[Density using $800 \times 800$ cells for \ofi scheme]{\includegraphics[width=1.5in, height=1.5in]{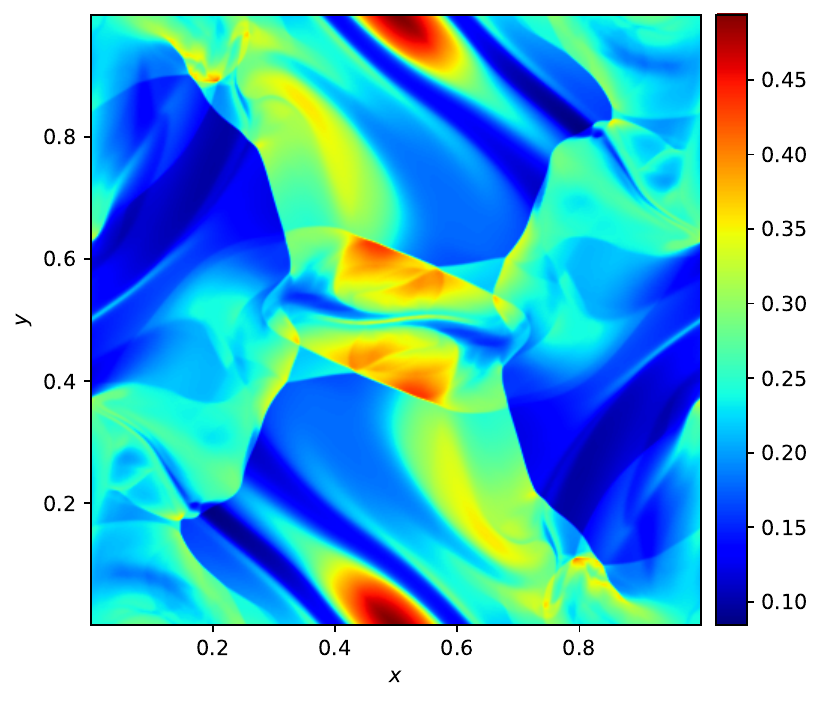}\label{fig:ot_o4i_800}}
		\caption{\textbf{\nameref{test:ot}}: Isotropic case:  Plots of density at different meshes for IMEX schemes \oti, \othi~and \ofi~at time $t=0.5$.}
		\label{fig:ot}
	\end{center}
\end{figure}
\begin{figure}[!htbp]
	\begin{center}
		\subfloat[Isotropic case: Cut of pressure component $\pll$ along $y = 0.3125$ with $200\times 200$ cells using \oti, \othi and \ofi schemes ]{\includegraphics[width=5.0in, height=1.5in]{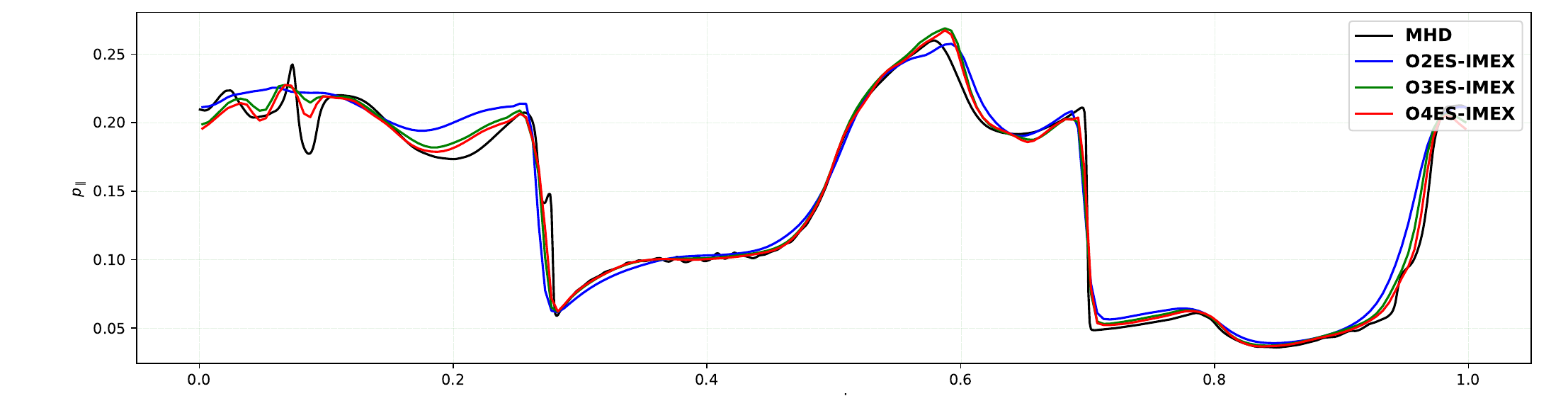}\label{fig:ot_cut_pll}}\\
		\subfloat[Isotropic case: Cut of pressure component $\per$ along $y = 0.3125$ with $200\times 200$ cells using \oti, \othi and \ofi schemes ]{\includegraphics[width=5.0in, height=1.5in]{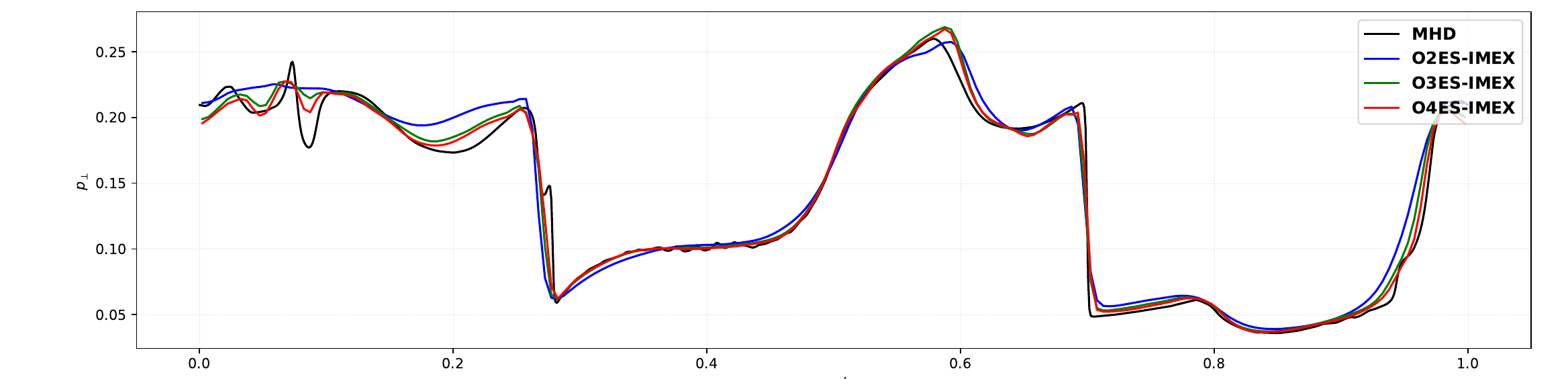}\label{fig:ot_cut_perp}}\\
		\subfloat[Isotropic case: Total entropy decay at each time step with $200\times 200$ cells using \oti, \othi and \ofi schemes ]{\includegraphics[width=1.5in, height=1.5in]{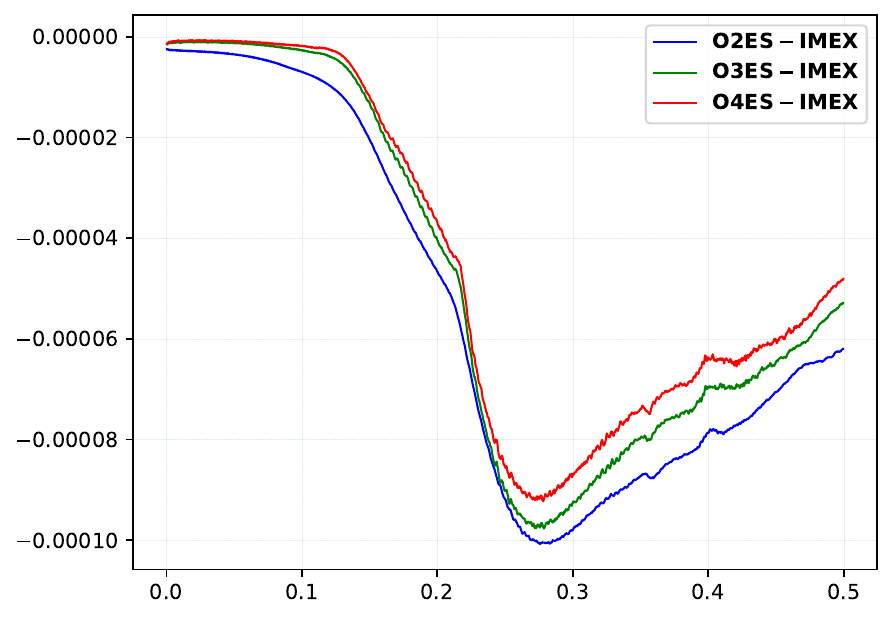}\label{fig:ot_entropy}}~
		\subfloat[Isotropic case: Total entropy decay at each time step with $400\times 400$ cells using \oti, \othi and \ofi schemes ]{\includegraphics[width=1.5in, height=1.5in]{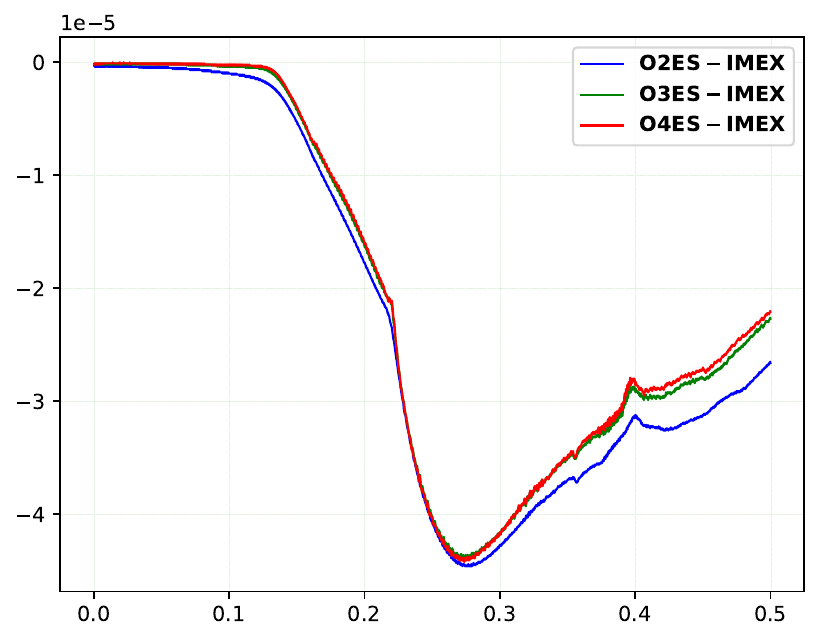}\label{fig:ot_entropy_400}}~
		\subfloat[Isotropic case: Total entropy decay at each time step with $800\times 800$ cells using \oti, \othi and \ofi schemes ]{\includegraphics[width=1.5in, height=1.5in]{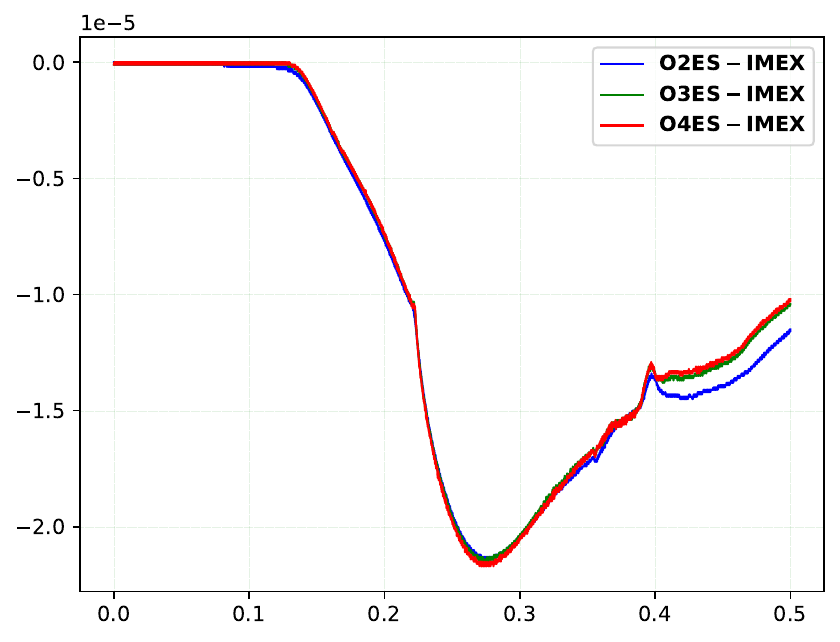}\label{fig:ot_entropy_800}}
		\caption{\textbf{\nameref{test:ot}}: Plots of pressure components and total entropy decay at each time step for IMEX schemes at time $t=0.5$.}
		\label{fig:ot_cut_and_ent}
	\end{center}
\end{figure}
\section{Conclusion}
\label{sec:conc}
In this article, we have proposed higher-order finite difference entropy stable numerical schemes for the CGL equations, which form a system of non-conservative hyperbolic equations. The proposed schemes are based on the novel reformulation of the equations, such that the non-conservative terms do not contribute to the entropy evolution. We then present the Godunov symmetrization of the conservative part. To design entropy stable schemes for the complete system, we first consider the entropy conservative numerical schemes for the conservative part and then extend the same to include non-conservative terms. To ensure entropy stability, we have considered the diffusion operator based on the entropy scaled right eigenvector of the symmetrized conservative part. The time discretization is performed using SSP-RK schemes for explicit schemes, which are used to calculate anisotropic solutions. For the isotropic case, we have considered a still source term. Hence, we have used ARK-IMEX schemes for time discretization. To test the entropy stability of the proposed numerical schemes, we have designed several test cases in one dimension by generalizing the one-dimensional MHD Riemann problems. We demonstrate that for a wide variety of test cases, the numerical entropy decay holds for each time step for both anisotropic and isotropic limits. For the isotropic limit, we show that solutions matched the MHD solution, as expected. For the two-dimensional test case, we have considered the Orzag-Tang vortex test case and present the isotropic case. We demonstrate that the schemes decay entropy only when discontinuities start to form in the solution. Furthermore, the solutions match the MHD solutions.

\section*{Acknowledgments}
The work of Harish Kumar is supported in parts by VAJRA grant No. VJR/2018/000129 by the Dept. of Science and Technology, Govt. of India
\section*{Conflict of interest}
The authors declare that they have no conflict of interest.
\section*{Data Availability Declaration}
Data will be made available on reasonable request.

\bibliographystyle{spmpsci}      
\bibliography{references}

\appendix
\section{Entropy scaled right eigenvectors for conservative part of the reformulated CGL system}\label{scaledrev}

In this section, we present expressions for the right eigenvectors and scaled right eigenvectors for the conservative system
\begin{equation}
	\df{\con}{t} + \mathcal{A}_x(\con)\df{\con}{x} = 0,~~~~~~~~\text{where,}~ \mathcal{A}_x(\con)=\frac{\p \textbf{f}_x}{\p \con} + \phi'(\evar)^\top B_x'(\con) \label{eq:jacobi}
\end{equation}
The right eigenvectors are described in Section A.1, while the expressions of scaled eigenvectors have been demonstrated in Section A.2. We will do all the analysis in the term of primitive variable $\textbf{W} = \{\rho,u_x,u_y,u_z,\pll,\per,B_x,B_y,B_z\}$.

\subsection{Eigenvalues and Right eigenvectors}\label{A.1}
To derive the eigenvalues and right eigenvectors, it is useful to transform the system \eqref{eq:jacobi} in terms of the primitive variables $\textbf{W}$. The set of eigenvalues $\Lambda_1$ of the jacobian matrix $\mathcal{A}_x$ are given as
\begin{align*}
	& \lambda_{1,2,3,4,5,6,7,8}=u_x,~ u_x, ~u_x\pm v_{ax},~u_x\pm c_f,~u_x\pm c_s,
\end{align*}
where,
\begin{align*}
	& v^2_{ax}= \frac{B_x^2}{\rho},~v^2_a=\frac{B^2}{\rho}, ~a^2=\frac{2P_\perp}{\rho},&\\&
	c^2_{f,s}=\frac{1}{2}\bigg[(v_a^2+a^2)\pm\sqrt{(v_a^2+a^2)^2-4v^2_{ax}a^2}\bigg].
\end{align*}
The corresponding right eigenvectors are 
\begin{gather*}R_{\lambda_1}=
	\begin{pmatrix}
		1\\
		0\\
		0\\
		0\\
		0\\
		0\\
		0\\
		0\\
	\end{pmatrix},R_{\lambda_2}=
	\begin{pmatrix}
		0\\
		0\\
		0\\
		0\\
		1\\
		0\\
		0\\
		0
	\end{pmatrix},R_{\lambda_{3,4}}=
	\begin{pmatrix}
		0\\
		0\\
		\pm{\beta_z}\\
		\mp{\beta_y}\\
		0\\
		0\\
		-{\beta_z}sgn(B_x)\sqrt{\rho}\\
		{\beta_y}sgn(B_x)\sqrt{\rho}\\
	\end{pmatrix},
\end{gather*}
\begin{gather*}R_{\lambda_{5,6}}=
	\begin{pmatrix}
		\alpha_f \rho\\
		\pm \alpha_f c_f\\
		\mp \alpha_s c_s \beta_y sgn(B_x)\\
		\mp \alpha_s c_s \beta_z sgn(B_x)\\
		\alpha_f \pll\\
		\alpha_f \rho a^2\\
		\alpha_s a\beta_y\sqrt{\rho}\\
		\alpha_s a\beta_z\sqrt{\rho}
	\end{pmatrix},R_{\lambda_{7,8}}=
	\begin{pmatrix}
		\alpha_s \rho\\
		\pm \alpha_s c_s\\
		\pm \alpha_f c_f \beta_y sgn(B_x)\\
		\pm \alpha_fc_f \beta_z sgn(B_x)\\
		\alpha_s \pll\\
		\alpha_s \rho a^2\\
		-\alpha_f a\beta_y\sqrt{\rho} \\
		-\alpha_f a\beta_z\sqrt{\rho} 
	\end{pmatrix},
\end{gather*}
where,
\begin{align*}
	\alpha_f^2=\frac{a^2-c_s^2}{c_f^2-c_s^2},~\alpha_s^2=\frac{c_f^2-a^2}{c_f^2-c_s^2},~
	\beta_y=\frac{B_y}{\sqrt{B_y^2+B_z^2}},~\beta_z=\frac{B_z}{\sqrt{B_y^2+B_z^2}}.
\end{align*}
Above all, eigenvectors are linearly independent. The above-defined right eigenvectors are singular in a variety of cases as listed by Balsara and Roe in ~\cite{balsara1999eigenstructure}. 
Similarly, in $y$-direction, the set of eigenvalues $\Lambda_2$ of the jacobian matrix $\frac{\p \f_{y}}{\p \con}$ are given as
\begin{align*}
	& \lambda_{1,2,3,4,5,6,7,8}=u_y,~ u_y, ~u_y\pm v_{ay},~u_y\pm c_f,~u_x\pm c_s,
\end{align*}
where,
\begin{align*}
	& v^2_{ay}= \frac{B_y^2}{\rho},~v^2_a=\frac{B^2}{\rho}, ~a^2=\frac{2P_\perp}{\rho},&\\&
	c^2_{f,s}=\frac{1}{2}\bigg[(v_a^2+a^2)\pm\sqrt{(v_a^2+a^2)^2-4v^2_{ay}a^2}\bigg].
\end{align*}
the corresponding right eigenvectors are 
\begin{gather*}R_{\lambda_1}=
	\begin{pmatrix}
		1\\
		0\\
		0\\
		0\\
		0\\
		0\\
		0\\
		0\\
	\end{pmatrix},R_{\lambda_2}=
	\begin{pmatrix}
		0\\
		0\\
		0\\
		0\\
		1\\
		0\\
		0\\
		0
	\end{pmatrix},R_{\lambda_{3,4}}=
	\begin{pmatrix}
		0\\
		\pm{\beta_z}\\
		0\\
		\mp{\beta_x}\\
		0\\
		0\\
		-{\beta_z}sgn(B_y)\sqrt{\rho}\\
		{\beta_x}sgn(B_y)\sqrt{\rho}\\
	\end{pmatrix},
\end{gather*}
\begin{gather*}R_{\lambda_{5,6}}=
	\begin{pmatrix}
		\alpha_f \rho\\
		\mp \alpha_s c_s \beta_x sgn(B_y)\\
		\pm \alpha_f c_f\\
		\mp \alpha_s c_s \beta_z sgn(B_y)\\
		\alpha_f \pll\\
		\alpha_f \rho a^2\\
		\alpha_s a\beta_x\sqrt{\rho}\\
		\alpha_s a\beta_z\sqrt{\rho}
	\end{pmatrix},R_{\lambda_{7,8}}=
	\begin{pmatrix}
		\alpha_s \rho\\
		\pm \alpha_f c_f \beta_x sgn(B_y)\\
		\pm \alpha_s c_s\\
		\pm \alpha_fc_f \beta_z sgn(B_y)\\
		\alpha_s \pll\\
		\alpha_s \rho a^2\\
		-\alpha_f a\beta_x\sqrt{\rho} \\
		-\alpha_f a\beta_z\sqrt{\rho} 
	\end{pmatrix},
\end{gather*}
where,
\begin{align*}
	\alpha_f^2=\frac{a^2-c_s^2}{c_f^2-c_s^2},~\alpha_s^2=\frac{c_f^2-a^2}{c_f^2-c_s^2},~
	\beta_x=\frac{B_x}{\sqrt{B_x^2+B_z^2}},~\beta_z=\frac{B_z}{\sqrt{B_x^2+B_z^2}}.
\end{align*}
\subsection{Entropy scaled right eigenvectors}
In this section, we have calculated the entropy scaled right eigenvectors for the case of $x$-direction. Consider the conservative part of the CGL system \eqref{eq:jacobi}. The eigenvalues and Right eigenvectors in terms of primitive of the Jacobian matrix $\mathcal{A}_1$ are described in \eqref{A.1}. The right eigenvector matrix $R^x$ for
the matrix $\mathcal{A}_1$ in terms of conservative variable is given by the relation
\begin{equation*}
	R^x = \frac{\p \con}{\p \textbf{W}} R_{\textbf{W}}^{x}
\end{equation*}
where, $\frac{\p \con}{\p \textbf{W}}$ is the Jacobian matrix for the change of variable, given by
%
\begin{align*}
	\frac{\p \con}{\p \textbf{W}}=
	\begin{pmatrix}
		1 & 0 & 0 & 0 & 0 & 0 & 0 & 0 & 0\\
		u_x & \rho & 0 & 0 & 0 & 0 & 0 & 0 & 0\\
		u_y & 0 & \rho & 0 & 0 & 0 & 0 & 0 & 0\\
		u_z & 0 & 0 & \rho & 0 & 0 & 0 & 0 & 0\\
		0 & 0 & 0 & 0 & 1 & 0 & 0 & 0 & 0\\
		\frac{\bu^2}{2} & \rho u_x & \rho u_y & \rho u_z & \frac{1}{2} & 1 & B_x & B_y & B_z\\
		0 & 0 & 0 & 0 & 0 & 0 & 1 & 0 & 0\\
		0 & 0 & 0 & 0 & 0 & 0 & 0 & 1 & 0\\
		0 & 0 & 0 & 0 & 0 & 0 & 0 & 0 & 1
	\end{pmatrix}.
\end{align*}
and the matrix $R_{\textbf{W}}^{x}$ is given by, 
\begin{align*}
	\begin{pmatrix}
		\alpha_f \rho  & \alpha_s \rho  & 0 & 1 & 0 & 0 & 0 & \alpha_s \rho  & \alpha_f \rho  \\
		-\alpha_f c_f & -\alpha_s c_s & 0 & 0 & 0 & 0 & 0 & \alpha_s c_s & \alpha_f c_f \\
		\alpha_s \beta_y c_s \mathcal{S}_{x} & -\alpha_f \beta_y c_f \mathcal{S}_{x} & -\beta_z & 0 & 0 & 0 & \beta_z & \alpha_f \beta_y c_f \mathcal{S}_{x}& -\alpha_s\beta_y c_s \mathcal{S}_{x}\\
		\alpha_s \beta_z c_s \mathcal{S}_{x} & -\alpha_f \beta_z c_f \mathcal{S}_{x} & \beta_y & 0 & 0 & 0 & -\beta_y & \alpha_f \beta_z c_f \mathcal{S}_{x}& -\alpha_s \beta_z c_s \mathcal{S}_{x} \\
		\alpha_f \pll & \alpha_s \pll & 0 & 0 & 0 & 1 & 0 & \alpha_s \pll & \alpha_f \pll \\
		a^2 \alpha_f \rho  & a^2 \alpha_s \rho  & 0 & 0 & 0 & 0 & 0 & a^2 \alpha_s \rho  & a^2 \alpha_f \rho  \\
		0 & 0 & 0 & 0 & 1 & 0 & 0 & 0 & 0 \\
		a \alpha_s \beta_y \sqrt{\rho} & -a \alpha_f \beta_y \sqrt{\rho} & -\beta_z \sqrt{\rho} \mathcal{S}_{x} & 0 & 0 & 0 & -\beta_z \sqrt{\rho} \mathcal{S}_{x} & -a \alpha_f \beta_y \sqrt{\rho} & a \alpha_s \beta_y \sqrt{\rho} \\
		a \alpha_s \beta_z \sqrt{\rho} & -a \alpha_f \beta_z \sqrt{\rho} & \beta_y \sqrt{\rho} \mathcal{S}_{x} & 0 & 0 & 0 & \beta_y \sqrt{\rho} \mathcal{S}_{x} & -a \alpha_f \beta_z \sqrt{\rho} & a \alpha_s \beta_z \sqrt{\rho} \\
	\end{pmatrix}.
\end{align*}
where, $\mathcal{S}_{x} = sgn(B_x)$. We need to find a scaling matrix $T^x$ such that the scaled right eigenvector matrix $\Tilde{R}^x=R^x T^x$ satisfies
\begin{equation}
	\frac{\p \con}{\p \evar} = \Tilde{R}^x \Tilde{R}^{x\top}
\end{equation}
where $\evar$ is the entropy variable vector as in Eqn. \eqref{eq:envar}. We follow the Barth scaling process \cite{barth1999numerical} to scale the right eigenvectors. The scaling matrix $T^x$ is the square root of $Y^x$, where $Y^x$ has the expression
\begin{equation*}
	Y^x = (R_{\textbf{W}}^{x})^{-1} \frac{\p \textbf{W}}{\p \evar}\left(\frac{\p \con}{\p \textbf{W}}\right)^{-\top}(R_{\textbf{W}}^{x}) ^{-\top}
\end{equation*}
which results in
\begin{align*}
	Y^x = \begin{pmatrix}
		\frac{1}{8\rho} & 0 & 0 & 0 & 0 & 0 & 0 & 0 & 0\\
		0 & \frac{1}{8\rho} & 0 & 0 & 0 & 0 & 0 & 0 & 0\\
		0 & 0 & \frac{\per}{4\rho^2} & 0 & 0 & 0 & 0 & 0 & 0\\
		0 & 0 & 0 & \frac{\rho}{4} &0 & \frac{\pll}{4} & 0 & 0 & 0 \\
		0 & 0 & 0 & 0 & \frac{\per}{2\rho} & 0 & 0 & 0 & 0\\
		0 & 0 & 0 & \frac{\pll}{4} & 0&\frac{5 \pll^2}{4\rho} & 0 & 0 & 0 \\
		0 & 0 & 0 & 0 & 0 & 0 & \frac{\per}{4\rho^2}& 0 & 0\\
		0 & 0 & 0 & 0 & 0 & 0 & 0 & \frac{1}{8\rho} & 0\\ 
		0 & 0 & 0 & 0 & 0 & 0 & 0 & 0 & \frac{1}{8\rho}
	\end{pmatrix}.
\end{align*}
Then matrix $T^x$ is,
\begin{align*}
	\begin{pmatrix}
		\frac{1}{2\sqrt{2\rho}} & 0 & 0 & 0 & 0 & 0 & 0 & 0 & 0\\
		0 & \frac{1}{2\sqrt{2\rho}} & 0 & 0 & 0 & 0 & 0 & 0 & 0\\
		0 & 0 & \frac{\sqrt{\per}}{2\rho} & 0 & 0 & 0 & 0 & 0 & 0\\
		0 & 0 & 0 & \frac{\sqrt{\rho}(2 \pll + \rho )}{2 \sqrt{5 \pll^{2} + 4 \pll\rho + \rho^2}} & 0 &\frac{\pll\sqrt{\rho}}{2 \sqrt{5 \pll^{2} + 4 \pll\rho + \rho^2}} &  0 & 0 & 0 \\
		0 & 0 & 0 & 0 & \sqrt{\frac{\per}{2\rho}} & 0 & 0 & 0 & 0\\
		0 & 0 & 0 & \frac{\pll\sqrt{\rho}}{2 \sqrt{5 \pll^{2} + 4 \pll\rho + \rho^2}} & 0 & \frac{\pll (5 \pll + 2 \rho)}{2 \sqrt{\rho (5 \pll^{2} + 4 \pll\rho + \rho^2)}}& 0 & 0 & 0 \\
		0 & 0 & 0 & 0 & 0 & 0 & \frac{\sqrt{\per}}{2\rho}& 0 & 0\\
		0 & 0 & 0 & 0 & 0 & 0 & 0 & \frac{1}{2\sqrt{2\rho}} & 0\\ 
		0 & 0 & 0 & 0 & 0 & 0 & 0 & 0 & \frac{1}{2\sqrt{2\rho}}
	\end{pmatrix}.
\end{align*}
\begin{remark}
	For finding the square root of $3\times 3$ matrix 
	\begin{align*}
		M = \begin{pmatrix}
			a & 0 & b\\
			0 & e & 0 \\
			c & 0 & d 
		\end{pmatrix}
	\end{align*}
	where $a~,b~,c$, $d$ and $e$ are real or complex numbers. we apply the formula,
	\begin{align*}
		R = \begin{pmatrix}
			\frac{a+s}{t} & 0 & \frac{b}{t}\\
			0 & \sqrt{e} & 0\\
			\frac{c}{t} & 0 & \frac{d+s}{t}
		\end{pmatrix}.
	\end{align*}
	where, $\delta = ad-bc$, $\tau= a+d$ and $s=\sqrt{\delta}$, $t=\sqrt{\tau+2s}$. We apply this formula if $t \neq 0$.
\end{remark}
\begin{remark}
	We proceed similarly in the $y$-direction, the eigenvalues and Right eigenvectors in terms of primitive of the Jacobian matrix $\frac{\p \f_{2}}{\p \con}$ describe in \eqref{A.1}. The right eigenvector matrix is given by the relation
	\begin{equation*}
		R^y = \frac{\p \con}{\p \textbf{W}} R_{\textbf{W}}^{y}
	\end{equation*}
	where, $R_{\textbf{W}}^{y}$ is given below
	\begin{align*}
		\begin{pmatrix}
			\alpha_f \rho  & \alpha_s \rho  & 0 & 1 & 0 & 0 & 0 & \alpha_s \rho  & \alpha_f \rho  \\
			\alpha_s \beta_x c_s \mathcal{S}_{y} & -\alpha_f \beta_x c_f \mathcal{S}_{y} & -\beta_z & 0 & 0 & 0 & \beta_z & \alpha_f \beta_x c_f \mathcal{S}_{y}& -\alpha_s\beta_x c_s \mathcal{S}_{y}\\
			-\alpha_f c_f & -\alpha_s c_s & 0 & 0 & 0 & 0 & 0 & \alpha_s c_s & \alpha_f c_f \\
			\alpha_s \beta_z c_s \mathcal{S}_{y} & -\alpha_f \beta_z c_f \mathcal{S}_{y} & \beta_x & 0 & 0 & 0 & -\beta_x & \alpha_f \beta_z c_f \mathcal{S}_{y}& -\alpha_s \beta_z c_s \mathcal{S}_{y}\\
			\alpha_f \pll & \alpha_s \pll & 0 & 0 & 0 & 1 & 0 & \alpha_s \pll & \alpha_f \pll \\
			a^2 \alpha_f \rho  & a^2 \alpha_s \rho  & 0 & 0 & 0 & 0 & 0 & a^2 \alpha_s \rho  & a^2 \alpha_f \rho  \\
			a \alpha_s \beta_x \sqrt{\rho} & -a \alpha_f \beta_x \sqrt{\rho} & -\beta_z \sqrt{\rho} \mathcal{S}_{y} & 0 & 0 & 0 & -\beta_z \sqrt{\rho} \mathcal{S}_{y} & -a \alpha_f \beta_x \sqrt{\rho} & a \alpha_s \beta_x \sqrt{\rho} \\
			0 & 0 & 0 & 0 & 1 & 0 & 0 & 0 & 0 \\
			a \alpha_s \beta_z \sqrt{\rho} & -a \alpha_f \beta_z \sqrt{\rho} & \beta_x \sqrt{\rho} \mathcal{S}_{y} & 0 & 0 & 0 & \beta_x \sqrt{\rho} \mathcal{S}_{y} & -a \alpha_f \beta_z \sqrt{\rho} & a \alpha_s \beta_z \sqrt{\rho} \\
		\end{pmatrix}
	\end{align*}
	where, $\mathcal{S}_{y} = sgn(B_y)$. Accordingly, we obtain the scaling matrix $T^y$ as,
	\begin{align*}
		\begin{pmatrix}
			\frac{1}{2\sqrt{2\rho}} & 0 & 0 & 0 & 0 & 0 & 0 & 0 & 0\\
			0 & \frac{1}{2\sqrt{2\rho}} & 0 & 0 & 0 & 0 & 0 & 0 & 0\\
			0 & 0 & \frac{\sqrt{\per}}{2\rho} & 0 & 0 & 0 & 0 & 0 & 0\\
			0 & 0 & 0 & \frac{\sqrt{\rho}(2 \pll + \rho )}{2 \sqrt{5 \pll^{2} + 4 \pll\rho + \rho^2}} & 0 &\frac{\pll\sqrt{\rho}}{2 \sqrt{5 \pll^{2} + 4 \pll\rho + \rho^2}} &  0 & 0 & 0 \\
			0 & 0 & 0 & 0 & \sqrt{\frac{\per}{2\rho}} & 0 & 0 & 0 & 0\\
			0 & 0 & 0 & \frac{\pll\sqrt{\rho}}{2 \sqrt{5 \pll^{2} + 4 \pll\rho + \rho^2}} & 0 & \frac{\pll (5 \pll + 2 \rho)}{2 \sqrt{\rho (5 \pll^{2} + 4 \pll\rho + \rho^2)}}& 0 & 0 & 0 \\
			0 & 0 & 0 & 0 & 0 & 0 & \frac{\sqrt{\per}}{2\rho}& 0 & 0\\
			0 & 0 & 0 & 0 & 0 & 0 & 0 & \frac{1}{2\sqrt{2\rho}} & 0\\ 
			0 & 0 & 0 & 0 & 0 & 0 & 0 & 0 & \frac{1}{2\sqrt{2\rho}}
		\end{pmatrix}.
	\end{align*}
\end{remark}

\subsection{Entropy scaled right eigenvectors in $x$ and $y$ direction}
In $x$ direction, the entropy-scaled right eigenvectors corresponding to the above eigenvalues in the term of primitive variables are the following,
\begin{gather*}\tilde{R}_{\lambda_1}=
	\begin{pmatrix}
		\frac{\sqrt{\rho}(2 \pll + \rho )}{2 \sqrt{5 \pll^{2} + 4 \pll\rho + \rho^2}}\\
		0\\
		0\\
		0\\
		\frac{\pll\sqrt{\rho}}{2 \sqrt{5 \pll^{2} + 4 \pll\rho + \rho^2}}\\
		0\\
		0\\
		0\\
		0
	\end{pmatrix},\tilde{R}_{\lambda_2}=
	\begin{pmatrix}
		0\\
		0\\
		0\\
		0\\
		0\\
		0\\
		\sqrt{\frac{\per}{2 \rho}}\\
		0\\
		0
	\end{pmatrix},\tilde{R}_{\lambda_3}=
	\begin{pmatrix}
		\frac{\pll\sqrt{\rho}}{2 \sqrt{5 \pll^{2} + 4 \pll\rho + \rho^2}}\\
		0\\
		0\\
		0\\
		\frac{\pll (5 \pll + 2 \rho)}{2 \sqrt{\rho (5 \pll^{2} + 4 \pll\rho + \rho^2)}}\\
		0\\
		0\\
		0\\
		0
	\end{pmatrix},
\end{gather*}
\begin{gather*}\tilde{R}_{\lambda_{4,5}}=
	\frac{1}{2}\begin{pmatrix}
		0\\
		0\\
		\pm\frac{\sqrt{\per}}{\rho}{\beta_z}\\
		\mp\frac{\sqrt{\per}}{\rho}{\beta_y}\\
		0\\
		0\\
		0\\
		-\sqrt{\frac{\per}{\rho}}{\beta_z}\\
		\sqrt{\frac{\per}{\rho}}{\beta_y}\\
	\end{pmatrix},\tilde{R}_{\lambda_{6,7}}=
	\frac{1}{2\sqrt{2}}\begin{pmatrix}
		\alpha_f \sqrt{\rho}\\
		\pm \frac{\alpha_f c_f}{\sqrt{\rho}}\\
		\mp \frac{\alpha_s c_s \beta_y}{\sqrt{\rho}}\\
		\mp \frac{\alpha_s c_s \beta_z}{\sqrt{\rho}}\\
		\alpha_f \frac{\pll}{\sqrt{\rho}}\\
		\alpha_f \sqrt{\rho} a^2\\
		0\\
		\alpha_s a\beta_y\\
		\alpha_s a\beta_z
	\end{pmatrix},\tilde{R}_{\lambda_{8,9}}=
	\frac{1}{2\sqrt{2}}\begin{pmatrix}
		\alpha_s \sqrt{\rho}\\
		\pm \frac{\alpha_s c_s}{\sqrt{\rho}}\\
		\pm \frac{\alpha_f c_f \beta_y}{\sqrt{\rho}}\\
		\pm \frac{\alpha_f c_f \beta_z}{\sqrt{\rho}}\\
		\alpha_s \frac{\pll}{\sqrt{\rho}}\\
		\alpha_s \sqrt{\rho} a^2\\
		0\\
		-\alpha_f a\beta_y\\
		-\alpha_f a\beta_z
	\end{pmatrix},
\end{gather*}
In $y$-direction, the entropy-scaled right eigenvectors corresponding to the above eigenvalues in the term of primitive variables are the following,
\begin{gather*}\tilde{R}_{\lambda_1}=
	\begin{pmatrix}
		\frac{\sqrt{\rho}(2 \pll + \rho )}{2 \sqrt{5 \pll^{2} + 4 \pll\rho + \rho^2}}\\
		0\\
		0\\
		0\\
		\frac{\pll\sqrt{\rho}}{2 \sqrt{5 \pll^{2} + 4 \pll\rho + \rho^2}}\\
		0\\
		0\\
		0\\
		0
	\end{pmatrix},\tilde{R}_{\lambda_2}=
	\begin{pmatrix}
		0\\
		0\\
		0\\
		0\\
		0\\
		0\\
		0\\
		\sqrt{\frac{\per}{2 \rho}}\\
		0
	\end{pmatrix},\tilde{R}_{\lambda_3}=
	\begin{pmatrix}
		\frac{\pll\sqrt{\rho}}{2 \sqrt{5 \pll^{2} + 4 \pll\rho + \rho^2}}\\
		0\\
		0\\
		0\\
		\frac{\pll (5 \pll + 2 \rho)}{2 \sqrt{\rho (5 \pll^{2} + 4 \pll\rho + \rho^2)}}\\
		0\\
		0\\
		0\\
		0
	\end{pmatrix},
\end{gather*}
\begin{gather*}\tilde{R}_{\lambda_{4,5}}=
	\frac{1}{2}\begin{pmatrix}
		0\\
		\pm\frac{\sqrt{\per}}{\rho}{\beta_z}\\
		0\\
		\mp\frac{\sqrt{\per}}{\rho}{\beta_x}\\
		0\\
		0\\
		-\sqrt{\frac{\per}{\rho}}{\beta_z}\\
		0\\
		\sqrt{\frac{\per}{\rho}}{\beta_x}\\
	\end{pmatrix},\tilde{R}_{\lambda_{6,7}}=
	\frac{1}{2\sqrt{2}}\begin{pmatrix}
		\alpha_f \sqrt{\rho}\\
		\mp \frac{\alpha_s c_s \beta_x}{\sqrt{\rho}}\\
		\pm \frac{\alpha_f c_f}{\sqrt{\rho}}\\
		\mp \frac{\alpha_s c_s \beta_z}{\sqrt{\rho}}\\
		\alpha_f \frac{\pll}{\sqrt{\rho}}\\
		\alpha_f \sqrt{\rho} a^2\\
		\alpha_s a\beta_x\\
		0\\
		\alpha_s a\beta_z
	\end{pmatrix},\tilde{R}_{\lambda_{8,9}}=
	\frac{1}{2\sqrt{2}}\begin{pmatrix}
		\alpha_s \sqrt{\rho}\\
		\pm \frac{\alpha_f c_f \beta_x}{\sqrt{\rho}}\\
		\pm \frac{\alpha_s c_s}{\sqrt{\rho}}\\
		\pm \frac{\alpha_f c_f \beta_z}{\sqrt{\rho}}\\
		\alpha_s \frac{\pll}{\sqrt{\rho}}\\
		\alpha_s \sqrt{\rho} a^2\\
		-\alpha_f a\beta_x\\
		0\\
		-\alpha_f a\beta_z
	\end{pmatrix},
\end{gather*}
\section{Proof of $\evar^\top\cdot \bc_x(\con) = \evar^\top\cdot \bc_y(\con) = 0$}\label{A.4}
The entropy variable $\evar:=\frac{\p \ent}{\p \con}$ is given by
\begin{align*}
	\evar&=\Bigg\{5-s-\beta_\perp \bu^2,~2\beta_\perp\bu,~-\beta_\parallel+\beta_\perp,~-2\beta_\perp,~2\beta_\perp\B\Bigg\}^\top
\end{align*} 
where $\beta_\perp=\frac{\rho}{\per},$ and $\beta_\parallel=\frac{\rho}{\pll}$. Now the dot product of $\evar^\top$ and first coloum of $\bc_x(\con)$ is:
\begin{align*}
	\evar^\top\cdot& \left\{0,-\frac{\hb_x^2 \bu^2}{2},-\frac{\hb_x\hb_y \bu^2}{2},-\frac{\hb_x\hb_z \bu^2}{2},-\frac{2\pll \hb_x}{\rho}(\bhat\cdot\bu), \Upsilon_1^x,0,0,0\right\}=\\
	&= \left(2\frac{\rho}{\per} u_x\right)\left(-\frac{\hb_x^2 \bu^2}{2}\right)+\left(2\frac{\rho}{\per} u_y\right)\left(-\frac{\hb_x\hb_y \bu^2}{2}\right)+\left(2\frac{\rho}{\per}u_z\right)\left(-\frac{\hb_x\hb_z \bu^2}{2}\right)\\
	&\hspace{3.5cm}+\left(\frac{\rho(\pll-\per)}{\pll\per}\right)\left(-\frac{2\pll \hb_x}{\rho}(\bhat\cdot\bu)\right)+\left(-2\frac{\rho}{\per}\right)(\Upsilon_1^x)\\
	&=\left(2\frac{\rho}{\per} u_x\right)\left(-\frac{\hb_x^2 \bu^2}{2}\right)+\left(2\frac{\rho}{\per} u_y\right)\left(-\frac{\hb_x\hb_y \bu^2}{2}\right)+\left(2\frac{\rho}{\per}u_z\right)\left(-\frac{\hb_x\hb_z \bu^2}{2}\right)\\
	&\hspace{3.5cm}+\left(\frac{\rho(\pll-\per)}{\pll\per}\right)\left(-\frac{2\pll \hb_x}{\rho}(\bhat\cdot\bu)\right)\\
	&\hspace{3.5cm}+\left(-2\frac{\rho}{\per}\right)\left(-\hb_x(\hb\cdot\bu)\frac{\bu^2}{2}-\frac{(\pll - \per)\hb_x(\bhat\cdot\bu)}{\rho}\right)\\
	&=0
\end{align*}
Now the dot product of $\evar^\top$ and second coloum of $\bc_x(\con)$ is:
\begin{align*}
	\evar^\top\cdot &\left\{0,\hb_x^2 u_x,\hb_x\hb_y u_x,\hb_x\hb_z u_x,\frac{2\pll \hb_x}{\rho}\hb_x,\Upsilon_2^x,0,0,0\right\} =\\
	&=\left(2\frac{\rho}{\per} u_x\right)(\hb_x^2 u_x) + \left(2\frac{\rho}{\per} u_y\right)(\hb_x\hb_y u_x) +\left(2\frac{\rho}{\per}u_z\right)(\hb_x\hb_z u_x) \\
	&+ \left(\frac{\rho(\pll-\per)}{\pll\per}\right)\left(\frac{2\pll \hb_x}{\rho}\hb_x\right)
	+\left(-2\frac{\rho}{\per}\right)\left(\hb_x(\bhat\cdot\bu)u_x+\frac{(\pll-\per)\hb^2_x}{\rho}\right)\\
	&=0
\end{align*}
Now the dot product of $\evar^\top$ and third coloum of $\bc_x(\con)$ is:
\begin{align*}
	\evar^\top\cdot& \left\{0,\hb_x^2 u_y,\hb_x\hb_y u_y,\hb_x\hb_z u_y,\frac{2\pll \hb_x}{\rho}\hb_y,\Upsilon_3^x,0,0,0\right\} =\\
	&=\left(2\frac{\rho}{\per} u_x\right)(\hb_x^2 u_y) + \left(2\frac{\rho}{\per} u_y\right)(\hb_x\hb_y u_y) +\left(2\frac{\rho}{\per}u_z\right)(\hb_x\hb_z u_y) \\
	&+ \left(\frac{\rho(\pll-\per)}{\pll\per}\right)\left(\frac{2\pll \hb_x}{\rho}\hb_y\right) + \left(-2\frac{\rho}{\per}\right)\left(\hb_x(\bhat\cdot\bu)u_y+\frac{(\pll - \per)\hb_x\hb_y}{\rho}\right)\\
	&=0
\end{align*}
Now the dot product of $\evar^\top$ and fourth coloum of $\bc_x(\con)$ is:
\begin{align*}
	\evar^\top\cdot& \left\{0,\hb_x^2 u_z,\hb_x\hb_y u_z,\hb_x\hb_z u_z,\frac{2\pll \hb_x}{\rho}\hb_z,\Upsilon_4^x,0,0,0\right\} =\\
	&=\left(2\frac{\rho}{\per} u_x\right)(\hb_x^2 u_z) + \left(2\frac{\rho}{\per} u_y\right)(\hb_x\hb_y u_z) +\left(2\frac{\rho}{\per}u_z\right)(\hb_x\hb_z u_z) \\
	&+ \left(\frac{\rho(\pll-\per)}{\pll\per}\right)\left(\frac{2\pll \hb_x}{\rho}\hb_z\right) + \left(-2\frac{\rho}{\per}\right)\left(\hb_x(\bhat\cdot\bu)u_z+\frac{(\pll - \per)\hb_x\hb_z}{\rho}\right)\\
	&=0
\end{align*}
Now the dot product of $\evar^\top$ and fifth coloum of $\bc_x(\con)$ is:
\begin{align*}
	\evar^\top\cdot& \left\{0, \frac{3}{2}\hb_x^2, \frac{3}{2}\hb_x\hb_y, \frac{3}{2}\hb_x\hb_z ,0, \frac{3}{2}\hb_x(\hb\cdot\bu),0,0,0\right\} =\\
	&=\left(2\frac{\rho}{\per} u_x\right)(\frac{3}{2}\hb_x^2) + \left(2\frac{\rho}{\per} u_y\right)(\frac{3}{2}\hb_x\hb_y) +\left(2\frac{\rho}{\per}u_z\right)(\frac{3}{2}\hb_x\hb_z) \\
	&+ \left(-2\frac{\rho}{\per}\right)\left(\frac{3}{2}\hb_x(\bhat\cdot\bu)\right)\\
	&=0
\end{align*}
Now the dot product of $\evar^\top$ and sixth coloum of $\bc_x(\con)$ is:
\begin{align*}
	\evar^\top\cdot& \left\{0, -\hb_x^2, -\hb_x\hb_y, -\hb_x\hb_z ,0, -\hb_x(\hb\cdot\bu),0,0,0\right\} =\\
	&=\left(2\frac{\rho}{\per} u_x\right)(-\hb_x^2) + \left(2\frac{\rho}{\per} u_y\right)(-\hb_x\hb_y) +\left(2\frac{\rho}{\per}u_z\right)(-\hb_x\hb_z)\\
	&+ \left(-2\frac{\rho}{\per}\right)\left(-\hb_x(\bhat\cdot\bu)\right)\\
	&=0
\end{align*}
The seventh coloum, 8th coloum and ninth coloum of $\bc_x(\con)$ denoted by $C_7,~C_8$ and $C_9$ is given below,
\begin{gather*}C_7=
	\begin{pmatrix}
		0 \\ \hb_x^2 B_x+\frac{2\Gamma_x\hb_x}{|\B|} \\ \hb_x\hb_y B_x+\frac{\Gamma_x\hb_y - \DP b_{yxx}}{|\B|} \\ \hb_x\hb_z B_x+\frac{\Gamma_x\hb_z - \DP b{zxx}}{|\B|} \\
		0 \\ \hb_x(\bhat\cdot\bu)B_x + \Theta^x_1 \\ 0 \\ 0 \\ 0
	\end{pmatrix},C_8=
	\begin{pmatrix}
		0 \\ \hb_x^2 B_y - \frac{2\DP b_{yxx}}{|\B|} \\ b_x b_y B_y+\frac{\Gamma_y b_x - \DP b_{xyy}}{|\B|} \\ \hb_x\hb_z B_y-\frac{2\DP b_{xyz}}{|\B|} \\ 0 \\ \hb_x(\bhat\cdot\bu)B_y + \Theta^x_2 \\ 0 \\ 0 \\ 0
	\end{pmatrix},
\end{gather*}
\begin{gather*}C_9=
	\begin{pmatrix}
		0 \\ \hb_x^2 B_z-\frac{2\DP b_{zxx}}{|\B|} \\ \hb_x\hb_y B_z-\frac{2\DP b_{xyz}}{|\B|} \\ \hb_x\hb_z B_z + \frac{\Gamma_z\hb_x - \DP b_{xzz}}{|\B|} \\ 0 \\ \hb_x(\bhat\cdot\bu)B_z + \Theta^x_3 \\ 0 \\ 0 \\ 0
	\end{pmatrix}.
\end{gather*}
Now the dot product of $\evar^\top$ and seventh coloum of $\bc_x(\con)$ is:
\begin{align*}
	\evar^\top\cdot C_7 & = \left(2\frac{\rho}{\per} u_x\right)\left(\hb_x^2 B_x+\frac{2(\pll - \per)\hb_x(1-\hb_x^2)}{|\B|}\right) \\
	&+ \left(2\frac{\rho}{\per} u_y\right)\left(\hb_x\hb_y B_x+\frac{(\pll - \per)\hb_y(1-\hb_x^2)}{|\B|}- \frac{(\pll - \per)\hb_y\hb_x^2}{|\B|}\right)\\
	&+\left(2\frac{\rho}{\per}u_z\right)\left(\hb_x\hb_z B_x+\frac{(\pll - \per)\hb_z(1-\hb_x^2)}{|\B|} - \frac{(\pll - \per)\hb_z\hb_x^2}{|\B|}\right)\\
	&+\left(-2\frac{\rho}{\per}\right)\Bigg(\hb_x(\bhat\cdot\bu)B_x + \left\{(\pll - \per)(\bhat\cdot\bu)+(\pll - \per)\hb_x u_x\right\}\Bigg(\frac{1-\hb^2_x}{|\B|}\Bigg)\\
	&  \hspace{5.0cm}-(\pll - \per)\frac{\hb^2_x \hb_y u_y}{|\B|}- (\pll - \per)\frac{\hb^2_x \hb_z u_z}{|\B|}\Bigg)\\
	&=0
\end{align*}
Now the dot product of $\evar^\top$ and 8th coloum of $\bc_x(\con)$ is:
\begin{align*}
	\evar^\top\cdot C_8 & = \left(2\frac{\rho}{\per} u_x\right)\left(\hb_x^2 B_y-\frac{2(\pll - \per)\hb_y\hb_x^2}{|\B|}\right) \\
	&+ \left(2\frac{\rho}{\per} u_y\right)\left(\hb_x\hb_y B_y+\frac{(\pll - \per)\hb_x(1-\hb_y^2)}{|\B|} - \frac{(\pll - \per)\hb_x\hb_y^2}{|\B|}\right)\\
	&+\left(2\frac{\rho}{\per}u_z\right)\left(\hb_x\hb_z B_y-\frac{2(\pll - \per)\hb_x\hb_y\hb_z}{|\B|}\right)\\
	&+\left(-2\frac{\rho}{\per}\right)\Bigg(\hb_x(\bhat\cdot\bu)B_y + (\pll - \per)\hb_x u_y \Bigg(\frac{1-\hb^2_y}{|\B|}\Bigg) - (\pll - \per)\frac{\hb_x \hb_y \hb_z u_z}{|\B|} \\
	&\hspace{5.0cm}  -\{(\pll - \per)(\bhat\cdot\bu)+(\pll - \per)\hb_x u_x\}\frac{\hb_x \hb_y}{|\B|}\Bigg)\\
	&=0
\end{align*}
Now the dot product of $\evar^\top$ and ninth coloum of $\bc_x(\con)$ is:
\begin{align*}
	\evar^\top\cdot C_9 &= \left(2\frac{\rho}{\per} u_x\right)\left(\hb_x^2 B_z-\frac{2(\pll - \per)\hb_z\hb_x^2}{|\B|}\right) \\
	&+ \left(2\frac{\rho}{\per} u_y\right)\left(\hb_x\hb_y B_z-\frac{2(\pll - \per)\hb_x\hb_y\hb_z}{|\B|}\right)\\
	&+\left(2\frac{\rho}{\per}u_z\right)\left(\hb_x\hb_z B_z+\frac{(\pll - \per)\hb_x(1-\hb_z^2)}{|\B|} - \frac{(\pll - \per)\hb_x\hb_z^2}{|\B|}\right)\\
	&+\left(-2\frac{\rho}{\per}\right)\Bigg(\hb_x(\bhat\cdot\bu)B_z + (\pll - \per)\hb_x u_z \Bigg(\frac{1-\hb^2_z}{|\B|}\Bigg) - (\pll - \per)\frac{\hb_x \hb_y \hb_z u_y}{|\B|}\\
	& \hspace{5.0cm} -\{(\pll - \per)(\bhat\cdot\bu)+(\pll - \per)\hb_x u_x\}\frac{\hb_x \hb_z}{|\B|}\Bigg)\\
	&=0
\end{align*}
Similarly we can proof that $\evar^\top\cdot \bc_y(\con) = 0$ for $y$-direction.
\section{Non-symmetrizability of CGL System}\label{A.5}
In this section, we have discussed the symmetrizability of the following CGL equations. Consider CGL equations in one dimension as follows:
\begin{equation}\label{eq:50}
	\frac{\p \con}{\p t}+\frac{\p \f_{x}}{\p x} + \bc_{x}(\con)\frac{\p \con}{\p x} =0.
\end{equation}
where $\con,~\f_{x}$ and $\bc_{x}(\con)$ are defined in \eqref{sec:reformulation}. Follow~\cite{yadav2023entropy}, the CGL equations are said to be symmetrizable if the change of variable $\con \rightarrow \evar$ applied to \eqref{eq:50} and can be written as:
$$
\frac{\p \con}{\p \evar}\frac{\p \evar}{\p {t}}+\left(\frac{\p \textbf{f}_x}{\p \con} +\bc_{x}(\con) \right)\frac{\p \con}{\p \evar}\frac{\p \evar}{\p {x}}=0.
$$
the matrix $\frac{\p \con}{\p \evar}$ is a symmetric, positive definite matrix and  $\aleph(\con) = \left(\frac{\p \textbf{f}_x}{\p \con} +\bc_{x}(\con) \right)\frac{\p \con}{\p \evar}$ is symmetric matrix. For the CGL equation \eqref{eq:50}, we calculate matrix $\aleph(\con)$ to examine its symmetry and conclude that CGL equations are non-symmetrizable. The resultant matrix $\aleph(\con) -\aleph(\con)^\top$ is given below:\\
\begin{landscape}
	\begin{align*}
		\begin{pmatrix}
			0 & -\frac{\DP b_x^2}{2} & -\frac{\DP b_x b_y}{2} & -\frac{\DP b_x b_z}{2} & 0 & -\frac{\DP b_x (\bhat\cdot\bu)}{2} & 0 & 0 & 0\\
			\frac{\DP b_x^2}{2} & 0 & \frac{\DP b_x (b_x u_y - b_y u_x)}{2} & \frac{\DP b_x (b_x u_z - b_z u_x)}{2} & \frac{3 \DP\pll b_x^2}{2\rho} & -\frac{\alpha_{xyz}}{4\rho} & -\frac{\per\omega_1}{2\rho} & -\frac{\DP \per b_y b_x^2}{\rho |\B|} & -\frac{\DP \per b_z b_x^2}{\rho |\B|}\\
			\frac{\DP b_x b_y}{2} & \frac{\DP b_x (b_y u_x - b_x u_y)}{2} & 0 & \frac{\DP b_x (b_y u_z - b_z u_y)}{2} & \frac{3 \DP\pll b_x b_y}{2\rho} & -\frac{\alpha_{yxz}}{4\rho} & -\frac{\per\omega_2}{2\rho} & \frac{\DP \per b_x (1-2b_y^2)}{2\rho |\B|} & -\frac{\DP \per b_x b_y b_z}{\rho |\B|}\\
			\frac{\DP b_x b_z}{2} & \frac{\DP b_x (b_z u_x - b_x u_z)}{2} & \frac{\DP b_x (b_z u_y - b_y u_z)}{2} & 0 & \frac{3 \DP\pll b_x b_z}{2\rho} & -\frac{\alpha_{zxy}}{4\rho} & -\frac{\per\omega_3}{2\rho} & -\frac{\DP \per b_x b_y b_z}{\rho |\B|} & \frac{\DP \per b_x (1-2b_z^2)}{2\rho |\B|} \\
			0 & -\frac{3 \DP\pll b_x^2}{2\rho} & -\frac{3 \DP\pll b_x b_y}{2\rho} & -\frac{3 \DP\pll b_x b_z}{2\rho} & 0 & -\frac{3 \DP\pll b_x(\bhat\cdot\bu) }{2\rho} & 0 & 0 & 0 \\
			\frac{\DP b_x (\bhat\cdot\bu)}{2} & \frac{\alpha_{xyz}}{4\rho} & \frac{\alpha_{yxz}}{4\rho} & \frac{\alpha_{zxy}}{4\rho} & \frac{3 \DP\pll b_x(\bhat\cdot\bu) }{2\rho} & 0 & -\frac{\per \beta_1}{2 \rho} & \frac{\per \beta_2}{2 \rho} &  \frac{\per \beta_3}{2 \rho}\\
			0 & \frac{\per\omega_1}{2\rho} & \frac{\per\omega_2}{2\rho} & \frac{\per\omega_3}{2\rho} & 0 & \frac{\per \beta_1}{2 \rho} & 0 & \frac{\per u_y}{2\rho} & \frac{\per u_z}{2\rho}\\
			0 & \frac{\DP \per b_y b_x^2}{\rho |\B|} & -\frac{\DP \per b_x (1-2b_y^2)}{2\rho |\B|} & \frac{\DP \per b_x b_y b_z}{\rho |\B|} & 0 & -\frac{\per \beta_2}{2 \rho} & -\frac{\per u_y}{2\rho} & 0 & 0\\
			0 & \frac{\DP \per b_z b_x^2}{\rho |\B|} & \frac{\DP \per b_x b_y b_z}{\rho |\B|} & -\frac{\DP \per b_x (1-2b_z^2)}{2\rho |\B|} & 0 & -\frac{\per \beta_3}{2 \rho} & -\frac{\per u_z}{2\rho} & 0 & 0
		\end{pmatrix}.
	\end{align*}
	where, $\DP = (\pll-\per)$ and $\alpha_{lmn}= 2 \per B_x B_l + \DP b_x\left(2 \rho u_l (b_m u_m + b_n u_n) - b_l (3\pll +2\per + \rho (u_m^2 +u_n^2 - u_l^2))\right);~~\forall~l,m,n \in \{x,y,z\}$\\
	
	$\beta_1 = B_y u_y + B_z u_z + \frac{\DP\left(-b_y u_y - b_z u_z  + 2 b_x((\bhat\cdot\bu)b_x-u_x)\right)}{|\B|}$ and $\beta_l = B_x u_l - \frac{\DP b_x(2 b_l(\bhat\cdot\bu)-u_l)}{|\B|};~~\forall~l\in\{y,z\}$\\
	
	$\omega_1 = B_x - \frac{2 \DP b_x (1-b_x^2)}{|\B|}$ and $\omega_l = B_l - \frac{\DP b_l (1 - 2 b_x^2)}{|\B|};~~\forall~l\in\{y,z\}$
\end{landscape}


\end{document}